\definecolor{rouge}{rgb}{0.7,0.00,0.00}
\definecolor{vert}{rgb}{0.00,0.5,0.00}
\definecolor{bleu}{rgb}{0.00,0.00,0.8}
\newtheorem{theorem}{Theorem}[section]
\newtheorem*{theorem*}{Theorem}
\newtheorem{lemma}[theorem]{Lemma}
\newtheorem{definition}[theorem]{Definition}
\newtheorem{corollary}[theorem]{Corollary}
\newtheorem{condition}{Condition}
\newtheorem{conditionA}{A\kern-0.1mm}
\theoremstyle{definition}
\newtheorem{remark}[theorem]{Remark}
\def \eref#1{\hbox{(\ref{#1})}}
\numberwithin{equation}{section}
\def\geq{\geqslant}
\def\leq{\leqslant}
\def\RR{\mathbb{R}}
\def\PP{\mathbb{P}}
\def\EE{\mathbb{E}}
\def\de{{\delta}}
\def\de{{\delta}}
\def\vare{{\varepsilon}}
\def \eref#1{\hbox{(\ref{#1})}}
\def\EE{\mathbb{ E}}
\begin{document}

\title[Averaging principle for 2D stochastic Navier-Stokes equations]
{ Averaging principle for two dimensional stochastic Navier-Stokes equations}

\author{Shihu Li}
\curraddr[Shihu Li]{ School of Mathematical Sciences, Nankai
University, Tianjin 300071, China }
\email{shihuli@mail.nankai.edu.cn}

\author{Xiaobin Sun}
\curraddr[Xiaobin Sun]{ School of Mathematics and Statistics, Jiangsu Normal University, Xuzhou, 221116, China}
\email{xbsun@jsnu.edu.cn}

\author{Yingchao Xie}
\curraddr[Yingchao Xie]{ School of Mathematics and Statistics, Jiangsu
Normal University, Xuzhou, 221116, China} \email{ycxie@jsnu.edu.cn}

\author{Ying Zhao}
\curraddr[Ying Zhao]{ School of Mathematics and Statistics, Jiangsu
Normal University, Xuzhou, 221116, China} \email{2020160621@jsnu.edu.cn}

\begin{abstract}
The averaging principle is established for the slow component and the fast component being two dimensional stochastic Navier-Stokes equations
and stochastic reaction-diffusion equations, respectively. The classical Khasminskii approach based on time discretization is used for the proof of the slow component strong convergence to the solution of the corresponding averaged equation under some suitable conditions.
Meanwhile, some powerful techniques are used to overcome the difficulties caused by the nonlinear term and to release the regularity
of the initial value.
\end{abstract}
\date{\today}
\subjclass[2000]{60H15; 35Q30; 70K70}
\keywords{Stochastic Navier-Stokes equation; Averaging principle; Invariant measure; Strong convergence}

\maketitle
\section{Introduction}
In this paper, we shall establish the averaging principle of the following stochastic fast-slow system
on the 2D tours $\mathbb{T}^{2}=\mathbb{R}^2/(2\pi\mathbb{Z})^{2}$:
\begin{equation}\left\{\begin{array}{l}\label{Equation}
\displaystyle
dX^{\varepsilon}_t=\left[\nu\Delta X^{\varepsilon}_t-(X^{\varepsilon}_t\cdot\nabla)X^{\varepsilon}_t
+f(X^{\varepsilon}_t, Y^{\varepsilon}_t)-\nabla p\right]dt+\sigma_1(X^{\varepsilon}_t)d W^{Q_{1}}_{t},\\
\displaystyle dY^{\varepsilon}_t=\frac{1}{\varepsilon}\left[\Delta Y^{\varepsilon}_t+g(X^{\varepsilon}_t, Y^{\varepsilon}_t)\right]dt
+\frac{1}{\sqrt{\varepsilon}}\sigma_2(X^{\varepsilon}_t, Y^{\varepsilon}_t)d W^{Q_{2}}_{t},\\
\displaystyle \nabla\cdot X^{\varepsilon}_t=0, ~\nabla\cdot Y^{\varepsilon}_t=0, \\
X^{\varepsilon}_0=x, Y^{\varepsilon}_0=y,\end{array}\right.
\end{equation}
where $\varepsilon>0$ is a small parameter describing the ratio of time scale between the slow component $X^{\varepsilon}_t$
and the fast component $Y^{\varepsilon}_t$,  $\Delta$ is the Laplace operator, $p$ denotes the pressure, $\nu>0$ is the kinematic viscosity,
$f$, $g$, $\sigma_1$ and $\sigma_2$ satisfy some suitable conditions. $\{W^{Q_1}_t\}_{t\geq 0}$ and $\{W^{Q_2}_t\}_{t\geq 0}$
are $L^2(\mathbb{T}^{2}, \RR^2)$-valued mutually independent $Q_1$ and $Q_2$-Wiener processes on complete probability space
$(\Omega,\mathscr{F},\{\mathscr{F}_{t}\}_{t\geq0},\mathbb{P})$.

The averaging principle for multiscale system has a long and rich history, and has wide applications in material sciences, chemistry,
fluids dynamics, biology, ecology, climate dynamics, see, e.g., \cite{BR,WE,HKW,EEJ,MCCTB, WTRY16} and references therein.
Bogoliubov and Mitropolsky \cite{BM} first studied the averaging principle for the deterministic systems. Then Khasminskii \cite{K1}
studied averaging principle for stochastic differential equations (SDEs), see, e.g., \cite{G,L,LSX,XLM,XPW} for further generalization.
Recently, averaging principles for stochastic partial differential equations (SPDEs) have attracted much attention.
For example, Cerrai and Freidlin \cite{CF} proved the averaging principle for a general class of stochastic reaction-diffusion
systems with two time-scales, which has been extended to the more general model in \cite{C1,C2,CL}.
Br\'{e}hier \cite{B1} gave the strong and weak orders in averaging for stochastic evolution equation of parabolic
type with slow and fast time scales. In \cite{FLL}, Fu, Wan and Liu proved the strong averaging principle for stochastic
hyperbolic-parabolic equations with slow and fast time-scales.
For more interesting results on this topic, we refer to \cite{FL,FLWL,WR12,WRD12} and references therein.

However, there are few results on the average principle for SPDEs with highly nonlinear term. Recently, the second author and his cooperators \cite{DSXZ} have established the strong and weak averaging principle for one dimensional stochastic Burgers equation
with additive noise. Averaging principle for stochastic Kuramoto-Sivashinsky equation with a fast oscillation was studied by Gao in \cite{GP}. In this paper, we focus on studying the strong averaging principle for 2D stochastic Navier-Stokes equations with multiplicative noise.
To be more precise, we will prove that
\begin{equation}\label{1.2}
\lim_{\vare\to 0}\mathbb{E} \left( \sup_{t\in[0,T]} |X^{\varepsilon}_t-\bar{X}_t |^{2p} \right)=0, \quad p\geq 1,
\end{equation}
where $\bar{X}_t$ is the solution of the corresponding averaged equation (see equation \eref{1.3} below). The 2D stochastic
Navier-Stokes equations have been studied by many authors,  for instance, we refer to \cite{Daz1,DX,HM1,LR1,MS,SS} and the references therein.

The proof of our main result is based on the Khasminskii discretization introduced in \cite{K1}, which is a powerful skill
to study the averaging principle for different types of systems with two time-scales. More precisely, we split the interval $[0,T]$ into
some subintervals of size $\delta>0$ which depends on $\vare$, and on each interval $[k\delta, (k+1)\delta)]$, $k\geq 0$,
we construct an auxiliary process $(\hat{X}_t^\vare, \hat{Y}_t^\vare)$ which associate with the system
(\ref{Equation}). Then (\ref{1.2}) can be proved by the following two steps. Step 1, due to the highly nonlinear term in
stochastic Navier-Stokes equation, we will use stopping time techniques and control the difference of $X_t^\vare$ and $\bar{X}_t$
before the stopping time, which will be done by  controlling $|X_t^\vare-\hat{X}_t^\vare |$ and $|\hat{X}_t^\vare-\bar{X}_t|$ respectively.
Step 2, after the stopping time term can be estimated by the priori estimates of the solution.

Comparing with some recent works on strong convergence in averaging principle for SPDEs (cf. \cite{B1,FL,FLL,FLWL}),
the main challenge in the research of the strong convergence \eqref{1.2} is the nonlinear term of the Navier-Stokes equation.
Moreover, due to the dimension of space is two and multiplicative noise, the skills used in \cite{DSXZ} don't work in the situation for our case.
In order to overcome the difficulties, we shall deal with the nonlinear term and the multiplicative noise more delicately.

Because of the approach based on time discretization, the H\"{o}lder continuity of time for $X_{t}^{\varepsilon}$ would play an important role in the proof of the average principle usually. To this purpose, the condition of the initial value $x\in H^{\theta}$ (the Sobolev space, see Section 2) for some $\theta>0$ will be assumed usually, for example, see \cite[Proposition 4.4]{C1}, \cite[Lemma 3,4]{DSXZ} and \cite[Proposition 9]{GP}. However in this paper, we would like to stress the initial value $x\in H$, then replace studying the H\"{o}lder continuity of time by proving a weak result relatively (see Lemma \ref{COX} below), and it would be enough to prove our main result. Hence, the techniques used here are very helpful to weaken the regularity of initial value $x$. We also believe that these techniques can be applied to more general framework of SPDEs, which will be stated in our forthcoming paper.

The rest of the paper is organized as follows. In Section \ref{Sec Main Result}, 
under some suitable assumptions, we formulate our main result.
Section \ref{Sec Proof of Thm1} is devoted to proving our main result. In the Appendix \ref{Sec appendix},  we
give some properties of the nonlinear term and the proof of the well-posedness of our system.

Throughout the paper, $C$, $C_p$ and $C_{R,T}$ will denote positive constants which may change from line to line, where $C_p$ depends on
$p$, $C_{R,T}$ depends on $R, T$.

\section{Notations and main results} \label{Sec Main Result}

For $p\geq1$, let $L^p(\mathbb{T}^{2}, \RR^2)$ be the space of $p$-th power integrable $\RR^2$-valued functions on torus $\mathbb{T}^{2}$ and $|\cdot|_{L^p}$ be the usual norm. For $k\in\mathbb{N}$, $W^{k,2}(\mathbb{T}^2)$ is the Sobolev space
of all functions in $L^2(\mathbb{T}^{2}, \RR^2)$ whose differentials belong to $L^2(\mathbb{T}^{2}, \RR^2)$ up to the order $k$. Let $L^2_0(\mathbb{T}^{2},\RR^2)$ be the space of  square-integrable $\RR^2$-valued functions on the torus with vanishing mean, i.e.,
$$L^2_0(\mathbb{T}^{2},\RR^2):=\left\{u\in L^2(\mathbb{T}^{2},\mathbb{R}^{2}):~\int_{\mathbb{T}^{2}}u(\xi)d\xi=0\right\}.$$
We consider a Hilbert space $H$ which is a closed subspace of $L^2_0(\mathbb{T}^{2}, \RR^2)$, defined by
$$H:=\left\{u\in L^2_0(\mathbb{T}^{2}, \RR^2):~\nabla\cdot u=0\right\}.$$
The space $H$ is endowed with the inner product and the norm on $L^2(\mathbb{T}^{2}, \RR^2)$, which denoted by $\langle\cdot,\cdot\rangle$ and $|\cdot|$ respectively.

We shall fix an orthonormal basis $\{e_k\}_{k\geq1}$ of $H$ consisting of the eigenvectors
of $\Delta$, i.e.,
$$
\Delta e_k=-\lambda_k e_k,
$$
where $0<\lambda_1\leq\lambda_2\leq\cdots\leq\lambda_k\uparrow\infty$.
Moreover, we put $\mathscr{D}(A):=W^{2,2}(\mathbb{T}^2)\cap H$, and define the linear operator
$$Au:=P_{H}\Delta u,\quad u\in \mathscr{D}(A),$$
where $\emph{P}_H$ is the Helmholtz-Leray projector from
$L^2_0(\mathbb{T}^{2}, \RR^2)$ onto $H$. Furthermore, in our case
it is known that $A=\Delta$ due to the periodic boundary condition
(see, e.g., \cite{FMRT}). 
For simplicity, we also assume the viscosity constant $\nu=1$ in this paper.

For any $s\in\RR$, we define
 $$H^s:=\mathscr{D}((-A)^{s/2}):=\left\{u=\sum_{k}u_ke_k: u_k=\langle u,e_k\rangle\in \mathbb{R},~\sum_k\lambda_k^{s}u_k^2<\infty\right\},$$
 and
 $$(-A)^{s/2}u:=\sum_k\lambda_k^{s/2} u_ke_k,~~u\in\mathscr{D}((-A)^{s/2}),$$
with the associated norm
\begin{eqnarray*}
\|u\|_{s}:=|(-A)^{s/2}u|=\sqrt{\sum_k\lambda_k^{s} u^2_k}.
\end{eqnarray*}
It is easy to see $H^{0}=H$ and  $H^{-s}$ be the dual space of $H^s$. Notice that  the dual action is also denoted by $\langle\cdot,\cdot\rangle$ without confusion.

It is well known that $A$ is the infinitesimal generator of a strongly continuous semigroup $\{e^{tA}\}_{t\geq 0}$. For $\theta\geq 0$ and $x\in H$, there exits $C$ depends on $\theta$ such that
\begin{eqnarray}
\|e^{tA}x\|_\theta\leq C t^{-\frac{\theta}{2}}|x|.\label{SP}
\end{eqnarray}

Define the bilinear operator
$$B(u,v):H^1\times{H}^1\longrightarrow {H}^{-1}, B(u,v)=P_{H}\big((u\cdot\nabla)v\big)$$
and the trilinear operator
$$b(u,v,w)=\langle B(u,v),w\rangle=\sum_{i,j=1}^{2}\int_{\mathbb{T}^{2}}u_{i}(\xi)\frac{\partial v_{j}(\xi)}
{\partial {\xi}_{i}}w_{j}(\xi)d\xi, ~~~\text{for}~ u,v,w\in {H}^1.$$
Moreover, it is convenient to put $B(u)=B(u,u)$, for $u\in {H}^1$. The related properties of operators $b$ and $B$ are listed in the appendix.

Now, by applying the operator $P_H$ to the first equation of the system
\eref{Equation},  we remove the pressure term and consider the
following abstract stochastic evolution equations:
\begin{equation}\left\{\begin{array}{l}\label{main equation}
\displaystyle
dX^{\varepsilon}_t=\left[A X^{\varepsilon}_t-B(X^{\varepsilon}_t)+f(X^{\varepsilon}_t, Y^{\varepsilon}_t)\right]dt
+\sigma_1(X^{\varepsilon}_t)d W^{Q_{1}}_{t},\\
\displaystyle
dY^{\varepsilon}_t=\frac{1}{\varepsilon}\left[A Y^{\varepsilon}_t+g(X^{\varepsilon}_t, Y^{\varepsilon}_t)\right]dt
+\frac{1}{\sqrt{\varepsilon}}\sigma_2(X^{\varepsilon}_t, Y^{\varepsilon}_t)d W^{Q_{2}}_{t},\\
X^{\varepsilon}_0=x, Y^{\varepsilon}_0=y.\end{array}\right.
\end{equation}
Here $W^{Q_i}_t$ ($i=1,2$) are $H$-valued $Q_i$-Wiener process and $Q_i$ is a positive symmetric, trace class operate on $H$.

Put $U_i=Q_i^{1/2}H$ the Hilbert space with the inner product
$$\langle u,v\rangle_{U_i}=\langle Q_i^{-1/2}u,Q_i^{-1/2}v\rangle,~~u,v\in U_i$$
with the norm $|\cdot|_{U_i}=\sqrt{\langle\cdot,\cdot\rangle_{U_i}}$.
Let $\mathcal{L}_{Q_i}(U_i,H)$ be the space of linear operates $S:U_i\to H$ such that
$SQ_i^{1/2}$ is a Hilbert-Schmidit operator on $H$.
The norm on $\mathcal{L}_{Q_i}(U_i,H)$ is defined by
$$|S|^2_{\mathcal{L}_{Q_i}}=\text{Tr}(SQ_iS^*):=\sum_{k\geq1}|SQ^{1/2}_i e_k|^2,$$
where $S^*$ is the adjoint operator of $S$. In the following, we always assume that $W^{Q_1}_t$ and $W^{Q_2}_t$ are independent.

\medskip
We assume that $f, g: H\times H\to H$, $\sigma_1: H\to \mathcal L_{Q_1}(U_1; H)$ and $\sigma_2: H\times H\to\mathcal L_{Q_2}(U_2; H)$
satisfy the following conditions:


\begin{conditionA}\label{A1}
$f$, $g$, $\sigma_1$ and $\sigma_2$ are Lipschitz continuous, i.e., there exist some positive constants $L_{g}, L_{\sigma_2}$ and $C$ such that for any $x_1,x_2,y_1,y_2\in H$,
\begin{align*}
|f(x_1, y_1)-f(x_2, y_2)|\leq C\left(|x_1-x_2| + |y_1-y_2|\right);
\end{align*}
\begin{align*}
|g(x_1, y_1)-g(x_2, y_2)|\leq C|x_1-x_2| + L_{g}|y_1-y_2|;
\end{align*}
\begin{align*}
\left|\sigma_1(x_1)-\sigma_1(x_2)\right|_{\mathcal{L}_{Q_1}}\leq C|x_1-x_2|;
\end{align*}
\begin{align*}
\left|\sigma_2(x_1, y_1)-\sigma_2(x_2, y_2)\right|_{\mathcal{L}_{Q_2}} \leq C|x_1-x_2| + L_{\sigma_2}|y_1-y_2|.
\end{align*}
\end{conditionA}


\begin{conditionA}\label{A3}
There exists a constant  $\zeta\in(0,1)$, such that
\begin{align*}
|\sigma_2(x, y)|_{\mathcal{L}_{Q_2}}\leq C(1+|x| + |y|^{\zeta}) ~~~ \text{for any}~ x, y\in H.
\end{align*}
\end{conditionA}

\begin{conditionA}\label{A4} The smallest eigenvalue $\lambda_1$ of $-\Delta$ and the Lispchitz constants $L_{g}$, $L_{\sigma_2}$ satisfy
$$
2\lambda_{1}-2L_{g}-L^2_{\sigma_2}>0.
$$
\end{conditionA}

\begin{remark} The condition \ref{A1} ensures the existence and uniqueness of the solution of system \eref{main equation}. The condition \ref{A3} is used to prove all the moments of the solution $(X^{\vare}_t, Y^{\vare}_t)$ are finite, which could be removed if we assume the Lispchitz constant $L_{\sigma_2}$ is sufficiently small. The condition \ref{A4} is called the dissipative condition, which can guarantee that there exits a unique invariant measure for frozen equation and the exponential ergodicity holds.
\end{remark}

\medskip
Now, we recall the following definition.

\begin{definition}\label{S.S.}
For any initial value $x,y\in H$. The system \eqref{main equation} has a weak solution if there exist
$X^{\varepsilon} \in C([0,T]; {H}) \cap L^2(0, T; {H}^1)$ and  $Y^{\varepsilon} \in C([0,T]; H) \cap L^2(0, T; H^1)$, $\PP$-a.s.,
such that, for any $t\in[0,T]$ and $\phi, \varphi\in \mathscr{D}(A)$, the following identity hold
 \begin{eqnarray*}\label{def1}
 \langle X^{\varepsilon}_t,\phi\rangle=\!\!\!\!\!\!\!\!&&\langle x,\phi\rangle+\int_0^t\langle
 X^{\varepsilon}_s,A\phi\rangle ds-\int_0^t\langle B(X^{\varepsilon}_s),\phi\rangle ds\nonumber\\
&& +\int_0^t\langle f(X^{\varepsilon}_s, Y^{\varepsilon}_s),\phi\rangle ds+\int_0^t\langle \sigma_1(X^{\varepsilon}_s)dW^{Q_1}_s,
 \phi\rangle, \quad \PP\text{-a.s.}
\end{eqnarray*}
and
 \begin{eqnarray*}\label{def2}
\langle Y^{\varepsilon}_t,\varphi\rangle=\!\!\!\!\!\!\!\!&&\langle y,\varphi\rangle+\frac{1}{\varepsilon}
\int_0^t\langle Y^{\varepsilon}_s,A\varphi\rangle ds+\frac{1}{\varepsilon}\int_0^t\langle g(X^{\varepsilon}_s,Y^{\varepsilon}_s),
\varphi\rangle ds\nonumber\\
 && +\frac{1}{\sqrt{\varepsilon}}\int_0^t\langle \sigma_2(X^{\varepsilon}_s,Y^{\varepsilon}_s)dW^{Q_2}_s,\varphi\rangle, \quad \PP\text{-a.s.}.
\end{eqnarray*}
\end{definition}

Based on the local-monotonicity method, we have the following well-posedness result.

\begin{theorem}\label{Th1}
Assume the condition \ref{A1} holds. Then for initial value $x, y\in H$, the system \eqref{main equation} has a unique weak solution,
denoted by $(X^{\varepsilon},Y^{\varepsilon})$.
\end{theorem}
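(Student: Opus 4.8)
The plan is to prove the well-posedness of the coupled system \eqref{main equation} by combining the local monotonicity / coercivity framework for 2D stochastic Navier--Stokes equations with a fixed-point argument that handles the coupling through $f$ and $g$. First I would observe that for a \emph{fixed} continuous adapted process $X^\vare$, the fast equation for $Y^\vare$ is a stochastic reaction-diffusion equation with Lipschitz coefficients $g(X^\vare_\cdot,\cdot)$ and $\sigma_2(X^\vare_\cdot,\cdot)$; by the standard variational theory (Liu--R\"ockner), since $A=\Delta$ is the generator, $B$ is absent, and the hemicontinuity, (local) monotonicity, coercivity and growth conditions are immediate from the Lipschitz assumption \ref{A1}, there is a unique solution $Y^\vare \in C([0,T];H)\cap L^2(0,T;H^1)$ depending measurably on $X^\vare$. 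Symmetrically, for a fixed adapted $Y^\vare$, the slow equation is a 2D stochastic Navier--Stokes equation with an extra Lipschitz forcing $f(\cdot,Y^\vare_\cdot)$ and multiplicative Lipschitz noise $\sigma_1$; its unique weak solution in $C([0,T];H)\cap L^2(0,T;H^1)$ follows from the local-monotonicity method, using the key estimate (recorded in the Appendix) that $\|B(u)-B(v)\|_{-1}$ is controlled so that the pair $(A-B, \sigma_1)$ satisfies the local monotonicity inequality on balls together with the coercivity $2\langle Au,u\rangle - 2\langle B(u),u\rangle + |\sigma_1(u)|_{\mathcal L_{Q_1}}^2 \le -\|u\|_1^2 + C(1+|u|^2)$, the last step using $b(u,u,u)=0$.

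Next I would set up the coupling. Two natural routes are available: (i) a Picard-type iteration, defining $(X^{(n)},Y^{(n)})$ by solving the decoupled equations with the other component frozen at step $n-1$, and showing the iteration is Cauchy in a suitable norm; or (ii) applying a general existence theorem for the full system at once, checking that the combined drift $(AX-B(X)+f(X,Y), \tfrac1\vare(AY+g(X,Y)))$ and diffusion still satisfy local monotonicity and coercivity on the product space $H\times H$. I would prefer route (ii) for uniqueness and (i) or a Banach fixed point on a short time interval for existence, then patch up by the a priori bounds. For uniqueness: take two weak solutions $(X^1,Y^1)$, $(X^2,Y^2)$ with the same initial data, apply It\^o's formula to $|X^1_t-X^2_t|^2 + |Y^1_t-Y^2_t|^2$, use the antisymmetry of $b$ to write $\langle B(X^1)-B(X^2),X^1-X^2\rangle = -b(X^1-X^2, X^2, X^1-X^2)$, bound this by $\tfrac12\|X^1-X^2\|_1^2 + C|X^2|_{L^4}^4|X^1-X^2|^2$ via the 2D Ladyzhenskaya/Gagliardo--Nirenberg inequality $|w|_{L^4}^2 \le C|w|\,\|w\|_1$, absorb the gradient term into the good term coming from $\langle A(X^1-X^2),X^1-X^2\rangle = -\|X^1-X^2\|_1^2$, handle the $f$, $g$ and the $\sigma_1$, $\sigma_2$ contributions by Lipschitz continuity and Burkholder, and then invoke a stochastic Gr\"onwall lemma with the integrable weight $\int_0^T |X^2_s|_{L^4}^4\,ds < \infty$ (which is finite because $X^2\in L^2(0,T;H^1)\cap C([0,T];H)$).

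The main obstacle is the same one that always appears for 2D Navier--Stokes: the nonlinear term $B$ is only locally monotone, not globally monotone, so neither the Galerkin/compactness existence argument nor the Gr\"onwall step for uniqueness is completely routine — one must carry the stopping-time / cutoff at the level of $|X|_{L^4}$ (or $\|X\|_1$) and use the pathwise a priori estimates to remove it. Controlling $f(X,Y)$ and $g(X,Y)$ in this scheme is comparatively harmless since they are globally Lipschitz and $H$-valued, and the factor $1/\vare$ in the $Y$-equation only affects constants, not the structure, for each \emph{fixed} $\vare>0$. I would therefore organize the Appendix proof as: (a) state the properties of $b,B$ (antisymmetry, the $L^4$-estimate, the bound $\|B(u)-B(v)\|_{-1}\le C(\|u\|_1+\|v\|_1)|u-v|^{1/2}\|u-v\|_1^{1/2}$ or similar); (b) do a Galerkin approximation for the full system, derive uniform energy estimates in $C([0,T];H)\cap L^2(0,T;H^1)$ for both components using \ref{A1} and Gr\"onwall; (c) pass to the limit via compactness (Aubin--Lions) plus a martingale-solution argument, or via the monotonicity trick of Minty; (d) conclude pathwise uniqueness as above, which upgrades the martingale solution to a strong (probabilistically) unique weak (analytically) solution by Yamada--Watanabe. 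This yields exactly the statement of Theorem~\ref{Th1}.
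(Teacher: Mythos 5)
Your route (ii) is exactly the paper's proof: it rewrites the system as a single equation for $Z^{\varepsilon}=(X^{\varepsilon},Y^{\varepsilon})$ on the product Gelfand triple $\mathcal{V}\subset\mathcal{H}\subset\mathcal{V}'$ and verifies the coercivity and local monotonicity conditions of the variational (local-monotonicity) framework, delegating the Galerkin approximation and the passage to the limit to the cited references. Your key computations --- the cancellation $b(u,u,u)=0$ for coercivity, the $L^4$/Ladyzhenskaya estimate plus Young's inequality giving the local monotonicity weight $C(1+|u_2|_{L^4}^4)$, and the integrability of $\int_0^T|X_s|_{L^4}^4\,ds$ to close the Gr\"onwall step --- coincide with the paper's, so the proposal is correct and essentially the same argument.
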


Note that this solution is a strong one in the probabilistically meaning.
Using the local-monotonicity method one can prove the existence of weak solution for 2D Navier-Stokes equations.
For completeness, we will prove Theorem \ref{Th1} in the appendix .

\medskip
Now, we state our main result.
\begin{theorem}\label{main result 1}
Assume that the conditions \ref{A1}-\ref{A4} hold. Then for $x, y\in H$, $p\geq1$ and $T>0$, we have
\begin{align}
\lim_{\vare\rightarrow 0}\mathbb{E} \left(\sup_{t\in[0,T]}|X_{t}^{\vare}-\bar{X}_{t}|^{2p} \right)=0,\label{2.2}
\end{align}
where $\bar{X}_t$ is the solution of the corresponding averaged equation:
\begin{equation}\left\{\begin{array}{l}
\displaystyle d\bar{X}_{t}=A\bar{X}_{t}dt-B(\bar{X}_t)dt+\bar{f}(\bar{X}_{t})dt+\sigma_1(\bar{X}_t)d W^{Q_{1}}_{t},\\
\bar{X}_{0}=x,\end{array}\right. \label{1.3}
\end{equation}
with the average $\bar{f}(x)=\int_{H}f(x,y)\mu^{x}(dy)$. $\mu^{x}$ is the unique invariant measure of the frozen equation
\begin{eqnarray*}
\left\{ \begin{aligned}
&dY_{t}=[AY_{t}+g(x,Y_{t})]dt+\sigma_2(x,Y_t)d\bar{W}_{t}^{Q_{2}},\\
&Y_{0}=y,
\end{aligned} \right.
\end{eqnarray*}
$\bar{W}_{t}^{Q_{2}}$ is a $Q_2$-Wiener process, which is independent of $W^{Q_1}_t$ and $W^{Q_2}_t$.
\end{theorem}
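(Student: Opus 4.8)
The plan is to follow the classical Khasminskii time-discretization scheme, but carefully adapted to handle the nonlinear term $B(\cdot)$ of the Navier--Stokes equation and the multiplicative noise. First I would establish the necessary a priori bounds: uniform-in-$\vare$ moment estimates $\sup_{\vare}\mathbb{E}\big(\sup_{t\in[0,T]}|X^{\vare}_t|^{2p}\big)+\sup_{\vare}\mathbb{E}\int_0^T\|X^{\vare}_s\|_1^2\,ds<\infty$ and $\sup_{\vare}\sup_{t\ge0}\mathbb{E}|Y^{\vare}_t|^{2p}<\infty$ (the latter using \ref{A3} and \ref{A4}), together with the analogous estimates for $\bar X$. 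Because the initial data lies only in $H$, the usual $\vare$-uniform time-H\"older estimate for $X^{\vare}$ fails; in its place I would prove the weaker statement announced as Lemma~\ref{COX}, namely a bound of the form $\mathbb{E}\int_0^T|X^{\vare}_{t}-X^{\vare}_{t(\delta)}|^2\,dt\le C\,\delta^{\kappa}(1+|x|^2+|y|^2)$ for some $\kappa>0$, where $t(\delta)=\lfloor t/\delta\rfloor\delta$, exploiting the smoothing estimate \eref{SP} of the semigroup $e^{tA}$ to trade regularity for a small power of $\delta$. I would then introduce the auxiliary process: on each subinterval $[k\delta,(k+1)\delta)$ let $\hat Y^{\vare}_t$ solve the fast equation with coefficients frozen at $(X^{\vare}_{k\delta},\hat Y^{\vare}_{k\delta})$, and let $\hat X^{\vare}_t$ be the corresponding slow process driven by $f(X^{\vare}_{s(\delta)},\hat Y^{\vare}_s)$.

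The core estimate is a stopping-time argument to localize the nonlinearity. For $R>0$ set $\tau_R^{\vare}=\inf\{t\ge0:|X^{\vare}_t|>R\ \text{or}\ \int_0^t\|X^{\vare}_s\|_1^2ds>R\}$ and the analogous $\tau_R$ for $\bar X$, with $\tau=\tau_R^{\vare}\wedge\tau_R$. One splits
\[
\mathbb{E}\Big(\sup_{t\le T}|X^{\vare}_t-\bar X_t|^{2p}\Big)
\le C_p\,\mathbb{E}\Big(\sup_{t\le T\wedge\tau}|X^{\vare}_t-\bar X_t|^{2p}\Big)
+C_p\,\mathbb{E}\Big(\mathbf{1}_{\{\tau<T\}}\sup_{t\le T}|X^{\vare}_t-\bar X_t|^{2p}\Big),
\]
and the second term is controlled by H\"older's inequality together with $\PP(\tau<T)\le C_{R,T}/R^{?}$ coming from the a priori bounds, so it is made small by first choosing $R$ large (uniformly in $\vare$). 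For the first term, on $[0,T\wedge\tau]$ I would apply It\^o's formula to $|X^{\vare}_t-\bar X_t|^{2p}$; the key point is that the difference of the nonlinear terms, $\langle B(X^{\vare}_t)-B(\bar X_t),X^{\vare}_t-\bar X_t\rangle$, is handled by the standard trilinear estimate $|b(u,u,u-v)-b(v,v,u-v)|=|b(u-v,v,u-v)|\le \|v\|_1\,|u-v|\,\|u-v\|_1$ (recorded in the appendix) together with Young's inequality, absorbing the $\|u-v\|_1^2$ part into the viscous dissipation $-2p\|X^{\vare}-\bar X\|_1^{2}|X^{\vare}-\bar X|^{2p-2}$ and leaving a term bounded by $C\|\bar X_t\|_1^2|X^{\vare}_t-\bar X_t|^{2p}$, which is integrable in $t$ on $[0,\tau]$ and hence amenable to Gronwall's lemma. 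The multiplicative-noise contributions are treated by Burkholder--Davis--Gundy together with the Lipschitz bound on $\sigma_1$ from \ref{A1}, again producing a term of Gronwall type.

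What remains is to estimate $\mathbb{E}\big(\sup_{t\le T\wedge\tau}|X^{\vare}_t-\hat X^{\vare}_t|^{2p}\big)$ and $\mathbb{E}\big(\sup_{t\le T\wedge\tau}|\hat X^{\vare}_t-\bar X_t|^{2p}\big)$ and feed them into the Gronwall step. The first is a time-discretization error: the driving terms differ only through $f(X^{\vare}_s,Y^{\vare}_s)-f(X^{\vare}_{s(\delta)},\hat Y^{\vare}_s)$, handled using \ref{A1}, the weak time-regularity Lemma~\ref{COX} for the $|X^{\vare}_s-X^{\vare}_{s(\delta)}|$ part, and a comparison $\mathbb{E}|Y^{\vare}_s-\hat Y^{\vare}_s|^2\le C(\delta+\ldots)$ obtained from the exponential stability of the fast equation under \ref{A4} (Lipschitz contraction with rate $2\lambda_1-2L_g-L_{\sigma_2}^2>0$) plus the frozen-coefficient error. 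The second, $\hat X^{\vare}-\bar X$, is the genuine \emph{averaging} term: rewriting it via the semigroup, the main quantity is
\[
\mathbb{E}\Big|\int_0^{t}e^{(t-s)A}\big[f(X^{\vare}_{s(\delta)},\hat Y^{\vare}_s)-\bar f(\bar X_{s})\big]\,ds\Big|^{2},
\]
which one splits over the discretization blocks and, on each block, replaces $\hat Y^{\vare}_s$ by the frozen-equation dynamics and invokes the ergodic estimate $\big|\mathbb{E} f(x,Y^{x}_t(y))-\bar f(x)\big|\le C(1+|x|+|y|)e^{-ct}$ together with the time-rescaling $t\mapsto t/\vare$, yielding a bound of order $\delta+\vare/\delta$ (modulo the $X$-increment error absorbed by Lemma~\ref{COX}). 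Choosing $\delta=\delta(\vare)\to0$ with $\vare/\delta(\vare)\to0$ then drives everything to zero. \textbf{The main obstacle} is precisely the interaction between the quadratic nonlinearity and the low regularity of the initial data: one cannot afford the usual $H^{\theta}$-valued H\"older-in-time control of $X^{\vare}$, so the whole discretization-error analysis has to be rerun with only the $L^2$-in-time, $H$-valued increment bound of Lemma~\ref{COX}, and the trilinear term must be kept strictly inside the stopping-time localization so that its contribution is always multiplied by the controlled factor $\|\bar X\|_1^2$ or absorbed by viscosity; balancing these while still obtaining a positive power of $\delta$ is the delicate part.
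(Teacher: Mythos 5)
Your proposal follows essentially the same route as the paper: uniform moment bounds, the weak time-increment estimate of Lemma \ref{COX} in place of H\"older continuity of $t\mapsto X^{\vare}_t$, the Khasminskii auxiliary process, stopping-time localization of the nonlinearity, and the ergodic estimate with the rescaling $s\mapsto s/\vare$ leading to an error of order $\delta^{1/2}+\vare^{1/2}/\delta$ and the choice $\delta=\vare^{1/3}$. (The paper handles general $p$ only at the very end, reducing to $p=1$ by Cauchy--Schwarz and the moment bounds rather than running It\^o's formula on $|\cdot|^{2p}$.)

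The one step you leave under-specified is precisely the point the paper singles out as its key technical device. Estimating the averaging integral $J_t^{\vare}=\int_0^t e^{(t-s)A}\bigl[f(X^{\vare}_{s(\delta)},\hat Y^{\vare}_s)-\bar f(X^{\vare}_{s(\delta)})\bigr]\,ds$ in the $H$-norm, as in your displayed quantity, is not enough: after splitting $\hat X^{\vare}_t-\bar X_t=V_t^{\vare}+J_t^{\vare}$, the equation for $V^{\vare}$ contains $B(\bar X+J^{\vare})-B(\bar X)$, and by Corollary \ref{Property B2}(4) one needs $\|J^{\vare}\|_{1/2}$ to control it (a bound of the form $|J^{\vare}|\,\|\bar X\|_1\|V^{\vare}\|_1$ is excluded by the continuity conditions of Lemma \ref{Property B1}). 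The paper therefore proves $\mathbb{E}\sup_{t\in[0,T]}\|J^{\vare}_t\|_{1/2}^2\le C\vare^{1/2}/\delta$, the extra factor $(-A)^{1/4}$ costing only an integrable singularity $(\delta-s\vare)^{-1/4}$ in the ergodicity computation, and, crucially, includes $\int_0^t\|J^{\vare}_s\|_{1/2}^2\,ds$ in the stopping time $\tilde\tau^{\vare}_R$ so that the resulting quadratic term $\int_0^t\|J^{\vare}_s\|_{1/2}^2(\|\bar X_s+J^{\vare}_s\|_{1/2}^2+\|\bar X_s\|_{1/2}^2)\,ds$ remains controlled before applying Gronwall. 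Your sketch acknowledges this delicacy but does not supply the mechanism; with it, your plan coincides with the paper's proof.
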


\section{Proof of Theorem \ref{main result 1}} \label{Sec Proof of Thm1}

In this section, we are devoted to proving Theorem \ref{main result 1}.
The proof consists of the following several steps.
In the subsection 3.1, we first give some priori estimates of the solution $(X^{\varepsilon}_t, Y^{\varepsilon}_t)$
to the system \eref{main equation}, then prove a weaker result than the H\"{o}lder continuity of time for $X_{t}^{\varepsilon}$.
In the subsection 3.2, following the idea inspired by Khasminskii in \cite{K1},
we introduce an auxiliary process $(\hat{X}_{t}^{\varepsilon},\hat{Y}_{t}^{\varepsilon})$ and also give its uniform bounds.
Meanwhile, making use of the skills of stopping time, we also deduce an estimate of the (difference) process $X^{\varepsilon}_t-\hat{X}_{t}^{\varepsilon}$ when time $t$ is before the stopping time.
In the subsection 3.3, based on the exponential ergodicity of frozen equation, we give the control of the difference process  $\hat{X}^{\varepsilon}_t-\bar{X}_{t}$ when time $t$ is before the stopping time. Finally, we will use the priori estimates of the solution to control the term of time $t$ after the stopping time.  Note that we always assume conditions \ref{A1}-\ref{A4} hold in this section.

\subsection{Some priori estimates of \texorpdfstring{$(X^{\varepsilon}_t, Y^{\varepsilon}_t)$}{Lg}}
At first, we prove uniform bounds with respect to $\vare\in (0,1)$ for $p$-moment of the solutions $(X_{t}^{\varepsilon}, Y_{t}^{\vare})$ to the system \eref{main equation}.
\begin{lemma} \label{PMY} For any $x,y\in H$, $T>0$, $p\geq1$ and $\vare\in(0,1)$, there exists a constant $C_{p,T}>0$ such that for any
\begin{align}
\mathbb{E}\left(\sup_{t\in[0,T]}|X_{t}^{\vare}|^{2p}\right)+\EE\left(\int^T_0|X_{t}^{\vare}|^{2p-2}\|X_{t}^{\vare}\|^2_1dt\right)\leq C_{p,T}\left(1+|x|^{2p}+|y|^{2p}\right)\label{F3.1}
\end{align}
and
\begin{align}
\sup_{t\in[0, T]}\mathbb{E}|Y_{t}^{\varepsilon}|^{2p}\leq C_{p,T}\left(1+|x|^{2p}+|y|^{2p}\right).
\end{align}
\end{lemma}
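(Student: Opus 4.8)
The plan is to establish the two estimates by applying Itô's formula to $|X_t^\vare|^{2p}$ and $|Y_t^\vare|^{2p}$ separately, exploiting the cancellation property of the nonlinear term and the dissipativity of $A$. I would first treat the fast component, since its estimate feeds into nothing but itself, and then handle the slow component. For $Y_t^\vare$, applying Itô's formula gives
\begin{align*}
d|Y_t^\vare|^{2p}=\frac{2p}{\vare}|Y_t^\vare|^{2p-2}\langle AY_t^\vare+g(X_t^\vare,Y_t^\vare),Y_t^\vare\rangle\,dt+\frac{p}{\vare}|Y_t^\vare|^{2p-2}|\sigma_2(X_t^\vare,Y_t^\vare)|^2_{\mathcal L_{Q_2}}\,dt+\cdots,
\end{align*}
plus the $2p(p-1)$ term from the square of the martingale part and the stochastic integral. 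Using $\langle AY,Y\rangle=-\|Y\|_1^2\le-\lambda_1|Y|^2$, the Lipschitz bound $|g(x,y)|\le|g(x,0)|+C|x|+L_g|y|$, the sublinear growth \ref{A3} on $\sigma_2$, and Young's inequality to absorb the cross terms, the drift is controlled by $\frac{1}{\vare}\big[-(2p\lambda_1-2pL_g-p(2p-1)L_{\sigma_2}^2-\eta)|Y_t^\vare|^{2p}+C_{p,\eta}(1+|X_t^\vare|^{2p})\big]$ for small $\eta>0$; here the condition \ref{A4} guarantees the coefficient of $|Y_t^\vare|^{2p}$ is strictly negative when $\eta$ is chosen small (for $p=1$ exactly, and for $p>1$ one needs $L_{\sigma_2}$ controlled, but \ref{A3} with exponent $\zeta<1$ lets one absorb the high-power term via Young, so only a slightly weaker negativity is needed). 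Taking expectations, bounding $\mathbb E|X_t^\vare|^{2p}$ by what will be proved for the slow component, and applying Gronwall with the $1/\vare$ factor (which is harmless since the exponential prefactor $e^{-ct/\vare}$ is bounded by $1$) yields $\sup_{t\le T}\mathbb E|Y_t^\vare|^{2p}\le C_{p,T}(1+|x|^{2p}+|y|^{2p})$.

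For the slow component, Itô's formula applied to $|X_t^\vare|^{2p}$ gives
\begin{align*}
d|X_t^\vare|^{2p}=2p|X_t^\vare|^{2p-2}\langle AX_t^\vare-B(X_t^\vare)+f(X_t^\vare,Y_t^\vare),X_t^\vare\rangle\,dt+\big[\text{quadratic variation}\big]\,dt+dM_t,
\end{align*}
where $dM_t=2p|X_t^\vare|^{2p-2}\langle X_t^\vare,\sigma_1(X_t^\vare)\,dW_t^{Q_1}\rangle$. The crucial point is the cancellation $b(u,u,u)=\langle B(u),u\rangle=0$, a property listed in the appendix, so the nonlinear term drops out entirely. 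Using $\langle AX,X\rangle=-\|X\|_1^2$ and writing the $2p(p-1)|X_t^\vare|^{2p-4}|\sigma_1(X_t^\vare)^*X_t^\vare|^2$ and $p|X_t^\vare|^{2p-2}|\sigma_1(X_t^\vare)|^2_{\mathcal L_{Q_1}}$ terms, the linear growth of $\sigma_1$ and $f$ together with Young's inequality bound the drift by $-2p|X_t^\vare|^{2p-2}\|X_t^\vare\|_1^2+C_p|X_t^\vare|^{2p}+C_p|Y_t^\vare|^{2p}+C_p$. This is where the $\int_0^T|X_t^\vare|^{2p-2}\|X_t^\vare\|_1^2\,dt$ term on the left-hand side of \eqref{F3.1} is produced. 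Then I would take the supremum over $t\in[0,T]$, apply the Burkholder–Davis–Gundy inequality to $\sup_t|M_t|$, estimate the resulting $\mathbb E\big(\int_0^T|X_t^\vare|^{4p-2}|\sigma_1(X_t^\vare)|^2_{\mathcal L_{Q_1}}\,dt\big)^{1/2}$ by splitting off a factor $\sup_t|X_t^\vare|^p$ via Young's inequality so that half of $\mathbb E\sup_t|X_t^\vare|^{2p}$ is absorbed into the left side, and close the argument with Gronwall's lemma after inserting the already-obtained bound $\sup_{t\le T}\mathbb E|Y_t^\vare|^{2p}\le C_{p,T}(1+|x|^{2p}+|y|^{2p})$.

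One subtlety is that the two estimates are coupled through the $|X_t^\vare|^{2p}$ term appearing in the $Y$-equation and the $|Y_t^\vare|^{2p}$ term in the $X$-equation; this is resolved by first proving a bound on $\mathbb E|X_t^\vare|^{2p}$ alone (without the supremum, which suffices to feed the $Y$-estimate) using Gronwall, then using the resulting $\sup_{t\le T}\mathbb E|Y_t^\vare|^{2p}$ bound to upgrade to the supremum-inside bound for $X$. A cleaner route is to run Gronwall on the sum $\mathbb E|X_t^\vare|^{2p}+\mathbb E|Y_t^\vare|^{2p}$ first (the $1/\vare$ in front of the negative $Y$-drift only helps), then do the BDG/supremum step for $X$ at the end. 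The main obstacle I anticipate is the bookkeeping in the supremum step for $X_t^\vare$: one must carefully arrange the BDG estimate and Young's inequality so that the fractional-power terms arising from the multiplicative noise are genuinely absorbable, and verify that the constants stay uniform in $\vare\in(0,1)$ — the $1/\vare$ and $1/\sqrt\vare$ scalings in the fast equation must be shown not to blow up, which works precisely because the dissipativity \ref{A4} makes the fast dynamics contract on its own time scale. The use of \ref{A3} (rather than full linear growth) for $\sigma_2$ is what allows the moment estimates for all $p\ge1$ without a smallness assumption on $L_{\sigma_2}$, and I would make sure the exponent $\zeta<1$ is used via Young's inequality at the right place.
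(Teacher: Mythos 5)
Your proposal is correct and follows essentially the same route as the paper: It\^{o}'s formula for both components, the cancellation $\langle B(u),u\rangle=0$ for the nonlinear term, dissipativity from \ref{A4} together with the sublinear growth $\zeta<1$ in \ref{A3} to handle the fast component uniformly in $\vare$, and Burkholder--Davis--Gundy plus Gronwall for the supremum bound on $X^\vare$. The paper resolves the coupling exactly as you suggest, by writing the $Y$-moment bound as an exponential convolution against $1+\EE|X_s^\vare|^{2p}$ and substituting it into the $X$-estimate via Fubini before applying Gronwall.
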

\begin{proof}
According to It\^{o}'s formula, we have
\begin{eqnarray*}\label{ItoFormu 000}
|Y_{t}^{\vare}|^{2}=\!\!\!\!\!\!\!\!&&|y|^{2}+\frac{2}{\varepsilon}\int_{0} ^{t}\langle AY_{s}^{\varepsilon},Y_{s}^{\varepsilon}\rangle ds+\frac{2}{\varepsilon}\int_{0} ^{t}\langle g(X_{s}^{\varepsilon},Y_{s}^{\varepsilon}),Y_{s}^{\varepsilon}\rangle ds \nonumber \\
 \!\!\!\!\!\!\!\!&& + \frac{1}{\varepsilon}\int_{0} ^{t}|\sigma_2(X_{s}^{\varepsilon},Y_{s}^{\varepsilon})|_{\mathcal{L}_{Q_2}}^2ds
 +\frac{2}{\sqrt{\varepsilon}}\int_{0} ^{t}\langle\sigma_2(X_{s}^{\varepsilon},Y_{s}^{\varepsilon})dW^{Q_2}_s,Y_{s}^{\varepsilon}\rangle.
\end{eqnarray*}
Applying It\^{o}'s formula for $g(z)=(z)^{p}$ and $z_t=|Y_{t}^{\vare}|^{2}$, then taking
expectation on both side, we obtain
\begin{eqnarray*}\label{ItoFormu 001}
\mathbb{E}|Y_{t}^{\vare}|^{2p}=\!\!\!\!\!\!\!\!&&|y|^{2p}+\frac{2p}{\varepsilon}\mathbb{E}\left(\int_{0} ^{t}| Y_{s}^{\varepsilon}|^{2p-2}\langle AY_{s}^{\varepsilon},Y_{s}^{\varepsilon}\rangle ds\right)\nonumber \\
 \!\!\!\!\!\!\!\!&& +\frac{2p}{\varepsilon}\mathbb{E}\left(\int_{0} ^{t}| Y_{s}^{\varepsilon}|^{2p-2}\langle g(X_{s}^{\varepsilon},Y_{s}^{\varepsilon}),Y_{s}^{\varepsilon}\rangle ds \right) + \frac{p}{\varepsilon}\mathbb{E}\left(\int_{0} ^{t}|Y_{s}^{\varepsilon}|^{2p-2}|\sigma_2(X_{s}^{\varepsilon},Y_{s}^{\varepsilon})|_{\mathcal{L}_{Q_2}}^2ds\right)
 \nonumber \\
 \!\!\!\!\!\!\!\!&&+\frac{2p(p-1)}{\varepsilon}\mathbb{E}\left(\int_{0} ^{t}|Y_{s}^{\varepsilon}|^{2p-4}|\sigma_2(X_{s}^{\varepsilon},
 Y_{s}^{\varepsilon})^*Y^\varepsilon_s|_{U_2}^2ds\right).\nonumber
\end{eqnarray*}
Notice that $\langle Ax, x\rangle=-\|x\|^2_1\leq -\lambda_{1}|x|^2$ and by conditions \ref{A1} and \ref{A3},  there exists a constant $\gamma>0$ such that

\begin{eqnarray*}
\frac{d}{dt}\mathbb{E}|Y_{t}^{\vare}|^{2p}=\!\!\!\!\!\!\!\!&&-\frac{2p}{\varepsilon}\mathbb{E}\left(| Y_{t}^{\varepsilon}|^{2p-2}\|Y_{t}^{\varepsilon}\|_{1}^2 \right) +\frac{2p}{\varepsilon}\mathbb{E}\left[| Y_{t}^{\varepsilon}|^{2p-2}\langle g(X_{t}^{\varepsilon},Y_{t}^{\varepsilon}),Y_{t}^{\varepsilon}\rangle\right] \nonumber \\
 \!\!\!\!\!\!\!\!&& + \frac{p}{\varepsilon}\mathbb{E}\left[|Y_{t}^{\varepsilon}|^{2p-2}
|\sigma_2(X_{t}^{\varepsilon},Y_{t}^{\varepsilon})|_{\mathcal{L}_{Q_2}}^2\right]
 +\frac{2p(p-1)}{\varepsilon}\mathbb{E}\left[|Y_{t}^{\varepsilon}|^{2p-4}
 |\sigma_2(X_{t}^{\varepsilon},Y_{t}^{\varepsilon})^*Y^\varepsilon_t|_{U_2}^2\right]\nonumber \\
 \leq\!\!\!\!\!\!\!\!&&-\frac{2p\lambda_{1}}{\varepsilon}\mathbb{E}|Y_{t}^
 {\varepsilon}|^{2p}+
\frac{2p}{\varepsilon}\mathbb{E}\Big[|Y_{t}^{\varepsilon}|^{2p-2}\left(C|Y_{t}^{\varepsilon}|+L_{g}|X_{t}^{\varepsilon}|\cdot|Y_{t}^{\varepsilon}|+L_g |Y_{t}^{\varepsilon}|^2)\right) \Big]\nonumber \\
 \!\!\!\!\!\!\!\!&& +
 \frac{C_p}{\varepsilon}\mathbb{E}\left[|Y_{t}^{\varepsilon}|^{2p-2}
 L_{\sigma_2}^2(C+|X_{t}^{\varepsilon}|^2+|Y_{t}^{\varepsilon}|^{2\zeta})\right]
\nonumber \\
\leq\!\!\!\!\!\!\!\!&&-\frac{p\gamma}{\varepsilon}\mathbb{E}|Y_{t}^{\varepsilon}
|^{2p}+\frac{C_{p}}{\varepsilon}\EE|X_{t}^{\varepsilon}|^{2p}
+\frac{C_{p}}{\varepsilon},\label{4.4.2}
\end{eqnarray*}
where the last inequality comes from the Young's inequality. Hence, by comparison theorem, it is easy to see that
\begin{eqnarray}
\mathbb{E}|Y_{t}^{\varepsilon}|^{2p}\leq\!\!\!\!\!\!\!\!&&|y|^{2p}e^{-\frac{p\gamma}{\varepsilon}t}+\frac{C_{p}}{\varepsilon}\int^t_0
e^{-\frac{p\gamma}{\varepsilon}(t-s)}\left(1+\EE|X_{s}^{\varepsilon}|^{2p}\right)ds.\label{F3.4}
\end{eqnarray}

On the other hand, using It\^{o}'s formula again, we also have
\begin{eqnarray*}
|X_{t}^{\varepsilon}|^{2p}=\!\!\!\!\!\!\!\!&&|x|^{2p}+2p\int_{0} ^{t}| X_{s}^{\varepsilon}|^{2p-2}\langle AX_{s}^{\varepsilon},X_{s}^{\varepsilon}\rangle ds-2p\int_{0} ^{t}| X_{s}^{\varepsilon}|^{2p-2}\langle B(X_{s}^{\varepsilon}),X_{s}^{\varepsilon}\rangle ds \\
 \!\!\!\!\!\!\!\!&&+2p\int_{0} ^{t}|X_{s}^{\varepsilon}|^{2p-2}\langle f(X_{s}^{\varepsilon},Y_{s}^{\varepsilon}),X_{s}^{\varepsilon}\rangle ds +2p\int_{0} ^{t}| X_{s}^{\varepsilon}|^{2p-2}\langle\sigma_1( X_{s}^{\varepsilon}), dW^{Q_1}_s\rangle\\
 \!\!\!\!\!\!\!\!&& +p\int_{0} ^{t}|X_{s}^{\varepsilon}|^{2p-2}|\sigma_1( X_{s}^{\varepsilon})|_{\mathcal{L}_{Q_1}}^2ds+2p(p-1)\int_{0}^{t}|X_{s}^{\varepsilon}|^{2p-4}|\sigma_1(X_{s}^{\varepsilon})^*X^\varepsilon_s|_{U_1}^2ds.
\end{eqnarray*}
Then by Burkholder-Davis-Gundy's inequality, \eref{F3.4} and Lemma \ref{Property B1}, it holds that
\begin{eqnarray*}
&&\mathbb{E}\left(\sup_{t\in[0, T]}|X_{t}^{\vare}|^{2p}\right)+2p\EE\left(\int^T_0|X_{t}^{\vare}|^{2p-2}\|X_{t}^{\vare}\|^2_1dt\right)\\
\leq\!\!\!\!\!\!\!\!&&|x|^{2p}+C_p+C_p\int^T_0\mathbb{E}|X_{t}^{\varepsilon}|^{2p}dt+C_p\int^T_0\mathbb{E}| Y_{t}^{\varepsilon}|^{2p}dt\\
\leq\!\!\!\!\!\!\!\!&&C_p(|x|^{2p}+|y|^{2p}+1)+C_p\int^T_0\mathbb{E}|X_{t}^{\varepsilon}|^{2p}dt+\frac{C_p}{\vare}\int^T_0\int^t_0
e^{-\frac{p\gamma}{\varepsilon}(t-s)}\left(1+\EE|X_{s}^{\varepsilon}|^{2p}\right)dsdt\\
\leq\!\!\!\!\!\!\!\!&&C_p(|x|^{2p}+|y|^{2p}+1)+C_p\int^T_0\mathbb{E}|X_{t}^{\varepsilon}|^{2p}dt.
\end{eqnarray*}
Hence, applying Gronwall's inequality implies
\begin{eqnarray*}
\mathbb{E}\left(\sup_{t\in[0, T]}|X_{t}^{\vare}|^{2p}\right)+2p\EE\left(\int^T_0|X_{t}^{\vare}|^{2p-2}\|X_{t}^{\vare}\|^2_1dt\right)
\leq\!\!\!\!\!\!\!\!&&C_{p,T}(|x|^{2p}+|y|^{2p}+1),\label{4.4.4}
\end{eqnarray*}
which also gives
\begin{eqnarray*}
\mathbb{E}|Y_{t}^{\varepsilon}|^{2p}\leq
C_{p,T}\left(1+|x|^{2p}+|y|^{2p}\right).\label{4.4.5}
\end{eqnarray*}
The proof is complete.
\end{proof}

\vspace{0.3cm}
We are going to use the approach of time discretization to prove our main result, so the H\"{o}lder continuity of time for $X_{t}^{\varepsilon}$ always plays an important role. To this purpose, the condition of the initial value $x\in H^{\theta}$ for some $\theta>0$ will be assumed, for example, see \cite[Proposition 4.4]{C1}, \cite[Lemma 3,4]{DSXZ} and \cite[Proposition 9]{GP}. However, we will prove the following lemma instead of studying the H\"{o}lder continuity of time under the assumption of the initial value $x\in H$, and it would be enough to prove our main result. Hence, the techniques used here are very helpful to weaken the regularity of initial value $x$. The main idea of its proof is inspired from \cite[Lemma 2.8]{MZ}.

\begin{lemma} \label{COX}
For any $T>0$, $\vare\in(0,1)$ and $\delta>0$ small enough, there exists a constant $C_{T}>0$ such that for any $x,y\in H$
\begin{align}
\mathbb{E}\left[\int^{T}_0|X_{t}^{\varepsilon}-X_{t(\delta)}^{\varepsilon}|^2 dt\right]\leq C_{T}\delta^{1/2}(1+|x|^3+|y|^3),\label{F3.7}
\end{align}
where $t(\delta):=[\frac{t}{\delta}]\delta$ and $[s]$ denotes the largest integer which is no more than $s$.
\end{lemma}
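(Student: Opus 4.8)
The plan is to estimate $\mathbb{E}\int_0^T|X_t^\vare - X_{t(\delta)}^\vare|^2\,dt$ by splitting the increment $X_t^\vare - X_{t(\delta)}^\vare$ along the mild (or variational) formulation of the slow equation \eref{main equation} into four contributions: the semigroup part $(e^{(t-t(\delta))A}-I)X_{t(\delta)}^\vare$, the Stokes--nonlinear drift $\int_{t(\delta)}^t e^{(t-s)A}B(X_s^\vare)\,ds$, the reaction drift $\int_{t(\delta)}^t e^{(t-s)A}f(X_s^\vare,Y_s^\vare)\,ds$, and the stochastic convolution $\int_{t(\delta)}^t e^{(t-s)A}\sigma_1(X_s^\vare)\,dW_s^{Q_1}$. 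For each term I would bound its $L^2(\Omega;H)$-norm, then integrate in $t$ over $[0,T]$ and sum. The point of integrating $\int_0^T\cdots dt$ rather than taking a pointwise-in-$t$ supremum is precisely that it lets us trade regularity: a factor like $\int_0^\delta\|e^{rA}v\|$-type bound, which would blow up if we demanded $v\in H^\theta$, becomes integrable in the outer $t$-variable.

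For the semigroup term, the key is the smoothing estimate \eref{SP}: $(e^{rA}-I)u = -\int_0^r A e^{\tau A}u\,d\tau$, and using $\|Ae^{\tau A}u\| \lesssim \tau^{-1+\eta/2}\|u\|_\eta$ one wants $u\in H^\eta$ for some $\eta>0$ — which we do not have. Instead I would use the standard interpolation trick $|(e^{rA}-I)u|\leq C r^{1/2}\|u\|_1$, so that
\[
\mathbb{E}\int_0^T |(e^{(t-t(\delta))A}-I)X_{t(\delta)}^\vare|^2\,dt
\leq C\delta\,\mathbb{E}\int_0^T \|X_{t(\delta)}^\vare\|_1^2\,dt.
\]
Now $\mathbb{E}\int_0^T\|X_s^\vare\|_1^2\,ds$ is controlled by Lemma \ref{PMY} (the case $p=1$ gives the $\int_0^T\|X_t^\vare\|_1^2\,dt$ term), so this contributes $O(\delta)(1+|x|^2+|y|^2)$, which is even better than the claimed $\delta^{1/2}$ rate. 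For the $f$-drift term, condition \ref{A1} and the contraction $|e^{rA}|\leq 1$ give $|\int_{t(\delta)}^t e^{(t-s)A}f(X_s^\vare,Y_s^\vare)\,ds|\leq C\delta^{1/2}(\int_{t(\delta)}^t(1+|X_s^\vare|^2+|Y_s^\vare|^2)\,ds)^{1/2}$ by Cauchy--Schwarz; squaring, taking expectations and using Lemma \ref{PMY} again yields $O(\delta^2)$ after the $t$-integration. For the stochastic convolution I would apply the Itô isometry and the Lipschitz/linear-growth bound on $\sigma_1$ from \ref{A1} to get $\mathbb{E}|\int_{t(\delta)}^t e^{(t-s)A}\sigma_1(X_s^\vare)dW_s^{Q_1}|^2 = \mathbb{E}\int_{t(\delta)}^t |e^{(t-s)A}\sigma_1(X_s^\vare)|_{\mathcal L_{Q_1}}^2\,ds \leq C\delta(1+\sup_s\mathbb{E}|X_s^\vare|^2)$, again $O(\delta)$ after integrating.

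The genuinely delicate term is the bilinear one, $\int_{t(\delta)}^t e^{(t-s)A}B(X_s^\vare)\,ds$, because $B(u)$ lives only in $H^{-1}$ for $u\in H^1$ and has no better integrability than what the energy estimate provides. Here I would use the smoothing estimate in the form $|e^{rA}B(u)| = |(-A)^{1/2}e^{rA}(-A)^{-1/2}B(u)| \leq Cr^{-1/2}\|B(u)\|_{-1}$, together with the standard 2D estimate $\|B(u)\|_{-1}\leq C|u|\,\|u\|_1$ (from Lemma \ref{Property B1} in the appendix, the 2D Ladyzhenskaya-type bound $|b(u,u,v)|\lesssim |u|\,\|u\|_1\,\|v\|_1$). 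Thus
\[
\Big|\int_{t(\delta)}^t e^{(t-s)A}B(X_s^\vare)\,ds\Big|
\leq C\int_{t(\delta)}^t (t-s)^{-1/2}|X_s^\vare|\,\|X_s^\vare\|_1\,ds.
\]
By Cauchy--Schwarz in $s$ this is $\leq C(\int_{t(\delta)}^t(t-s)^{-1/2}|X_s^\vare|^2\,ds)^{1/2}(\int_{t(\delta)}^t(t-s)^{-1/2}\|X_s^\vare\|_1^2\,ds)^{1/2}$; bounding $|X_s^\vare|$ by $\sup_{[0,T]}|X^\vare|$ and $\int_{t(\delta)}^t(t-s)^{-1/2}ds\leq 2\delta^{1/2}$ pulls out $\delta^{1/2}\sup_{[0,T]}|X^\vare|$, leaving $\delta^{1/4}(\int_{t(\delta)}^t(t-s)^{-1/2}\|X_s^\vare\|_1^2 ds)^{1/2}$. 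Squaring, integrating $\int_0^T dt$, swapping the order of integration (the kernel integrates to a constant times $\delta^{1/2}$), and applying Hölder in $\omega$ with exponents $(\frac32,3)$ — using $\mathbb{E}\big((\sup_{[0,T]}|X^\vare|^2)\cdot\int_0^T\|X_s^\vare\|_1^2 ds\big)\leq (\mathbb{E}\sup|X^\vare|^6)^{1/3}(\mathbb{E}(\int_0^T\|X_s^\vare\|_1^2ds)^{3/2})^{2/3}$, both finite by Lemma \ref{PMY} with $p=3$ and $p=3/2$ respectively — produces the bound $C_T\delta^{1/2}(1+|x|^3+|y|^3)$. This is where the cubic powers of $|x|,|y|$ on the right-hand side of \eref{F3.7} come from, and matching the $\delta^{1/2}$ rate to this term (rather than to the better-behaved ones) is exactly what dictates the statement of the lemma; getting the bookkeeping of the singular kernel $(t-s)^{-1/2}$ against the outer time integral right, while keeping the stochastic moments at a level Lemma \ref{PMY} can supply, is the main obstacle.
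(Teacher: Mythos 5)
Your overall strategy (mild formulation plus semigroup smoothing) is genuinely different from the paper's, which never leaves the variational framework: the paper writes $|X_{t}^{\vare}-X_{t(\delta)}^{\vare}|^2\leq 2|X_{t}^{\vare}-X_{t-\delta}^{\vare}|^2+2|X_{t(\delta)}^{\vare}-X_{t-\delta}^{\vare}|^2$, applies It\^o's formula to $|X_{t}^{\vare}-X_{t-\delta}^{\vare}|^2$ starting from the moving left endpoint $t-\delta$, pairs $AX_s^{\vare}-B(X_s^{\vare})\in H^{-1}$ against $X_s^{\vare}-X_{t-\delta}^{\vare}\in H^1$, and closes everything with Cauchy--Schwarz and Fubini against $\EE\int_0^T\|X_s^{\vare}\|_1^2(1+|X_s^{\vare}|^2)\,ds$ from Lemma \ref{PMY}. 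In that route the nonlinear term actually contributes $O(\delta)$, and the rate $\delta^{1/2}$ is dictated by the Burkholder--Davis--Gundy estimate of the martingale term, not by $B$.

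Your version has a genuine gap in the very first term. Bounding $\EE\int_0^T|(e^{(t-t(\delta))A}-I)X_{t(\delta)}^{\vare}|^2\,dt$ by $C\delta\,\EE\int_0^T\|X_{t(\delta)}^{\vare}\|_1^2\,dt$ requires controlling
\begin{equation*}
\EE\int_0^T\|X_{t(\delta)}^{\vare}\|_1^2\,dt=\delta\sum_{k}\EE\|X_{k\delta}^{\vare}\|_1^2 ,
\end{equation*}
i.e.\ a Riemann sum of $\|X^{\vare}\|_1^2$ sampled \emph{at the grid points} $k\delta$. Lemma \ref{PMY} only gives $\EE\int_0^T\|X_s^{\vare}\|_1^2\,ds$; since for $x\in H$ the solution lies merely in $L^2(0,T;H^1)$ (with no pointwise-in-time $H^1$ bound), the sampled sum is not controlled by the time integral, and silently identifying the two is precisely the regularity-of-the-initial-datum issue this lemma is designed to avoid. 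A second, more repairable, misstep is in the bilinear term: you invoke $\EE\big(\int_0^T\|X_s^{\vare}\|_1^2\,ds\big)^{3/2}<\infty$ ``by Lemma \ref{PMY} with $p=3/2$'', but that lemma with $p=3/2$ bounds $\EE\int_0^T|X_t^{\vare}|\,\|X_t^{\vare}\|_1^2\,dt$, not higher moments of the energy integral; the latter needs a separate BDG argument on the energy identity. (Also, your H\"older split yields $1+|x|^4+|y|^4$ rather than the stated cubic dependence --- harmless for the application, but not the bound claimed.) If you replace the mild decomposition by the paper's It\^o-formula-on-the-increment device, both difficulties disappear.
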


\begin{proof}
By \eref{F3.1}, it is easy to get that
\begin{eqnarray}
&&\mathbb{E}\left[\int^{T}_0|X_{t}^{\varepsilon}-X_{t(\delta)}^{\varepsilon}|^2dt\right]\nonumber\\
=\!\!\!\!\!\!\!\!&& \mathbb{E}\left(\int^{\delta}_0|X_{t}^{\varepsilon}-x|^2dt\right)+\mathbb{E}\left[\int^{T}_{\delta}|X_{t}^{\varepsilon}-X_{t(\delta)}^{\varepsilon}|^2dt\right]\nonumber\\
\leq\!\!\!\!\!\!\!\!&& C\delta +2\mathbb{E}\left(\int^{T}_{\delta}|X_{t}^{\varepsilon}-X_{t-\delta}^{\varepsilon}|^2dt\right)+2\mathbb{E}\left(\int^{T}_{\delta}|X_{t(\delta)}^{\varepsilon}-X_{t-\delta}^{\varepsilon}|^2dt\right).\label{F3.8}
\end{eqnarray}
Then, we estimate the second term on the right-hand side of \eref{F3.8} firstly. According to It\^{o}'s formula, we have
\begin{eqnarray}
|X_{t}^{\varepsilon}-X_{t-\delta}^{\varepsilon}|^{2}=\!\!\!\!\!\!\!\!&&2\int_{t-\delta} ^{t}\langle AX_{s}^{\varepsilon}-B(X_{s}^{\varepsilon}), X_{s}^{\varepsilon}-X_{t-\delta}^{\varepsilon}\rangle ds+ 2\int_{t-\delta} ^{t}\langle f(X_{s}^{\varepsilon}, X_{s}^{\varepsilon}), X_{s}^{\varepsilon}-X_{t-\delta}^{\varepsilon}\rangle ds\nonumber \\
 \!\!\!\!\!\!\!\!&& +\int_{t-\delta} ^{t}|\sigma_1(X_{s}^{\varepsilon})|_{\mathcal{L}_{Q_1}}^2ds
 +2\int_{t-\delta} ^{t}\langle X_{s}^{\varepsilon}-X_{t-\delta},  \sigma_1(X_{s}^{\varepsilon})dW^{Q_1}_s\rangle \nonumber\\
:=\!\!\!\!\!\!\!\!&&I_{1}(t)+I_{2}(t)+I_{3}(t)+I_{4}(t).  \label{F3.9}
\end{eqnarray}
For the first term $I_1(t)$, by H\"{o}lder's inequality and Corollary \ref{Property B2}, there exists a constant $C>0$ such that
\begin{eqnarray}  \label{REGX1}
&&\mathbb{E}\left(\int^{T}_{\delta}|I_{1}(t)|dt\right)\nonumber\\
\leq\!\!\!\!\!\!\!\!&& C\mathbb{E}\left(\int^{T}_{\delta}\int_{t-\delta} ^{t}\| AX_{s}^{\varepsilon}-B(X_{s}^{\varepsilon})\|_{-1}
\|X_{s}^{\varepsilon}-X_{t-\delta}^{\varepsilon}\|_{1} ds dt\right)\nonumber\\
\leq\!\!\!\!\!\!\!\!&&C\left[\mathbb{E}\int^{T}_{\delta}\int_{t-\delta} ^{t}\|AX_{s}^{\varepsilon}-B(X_{s}^{\varepsilon})\|^2_{-1}dsdt\right]^{1/2}
\left[\mathbb{E}\int^{T}_{\delta}\int_{t-\delta} ^{t}\|X_{s}^{\varepsilon}-X_{t-\delta}^{\varepsilon}\|^2_{1} dsdt\right]^{1/2}\nonumber\\
\leq\!\!\!\!\!\!\!\!&&C\left[\delta\mathbb{E}\int^{T}_0\|X_{s}^{\varepsilon}\|^2_{1}(1+|X_{s}^{\varepsilon}|^2)ds\right]^{1/2}\cdot\left[\delta\mathbb{E}\int^{T}_0\|X_{s}^{\varepsilon}\|^2_{1}ds\right]^{1/2}\nonumber\\
\leq\!\!\!\!\!\!\!\!&&C_{T}\delta(1+|x|^3+|y|^3),
\end{eqnarray}
where we use the Fubini theorem and \eref{F3.1} in the third and fourth inequalities respectively.

For $I_{2}(t)$ and $I_3(t)$, by condition \ref{A1} and \eref{F3.1}, we get
\begin{eqnarray}\label{REGX2}
&&\mathbb{E}\left(\int^{T}_{\delta}|I_{2}(t)|dt\right)\nonumber\\
\leq\!\!\!\!\!\!\!\!&&C\mathbb{E}\left(\int^{T}_{\delta}\int_{t-\delta} ^{t}(1+|X_{s}^{\varepsilon}|+|Y_{s}^{\varepsilon}|)(|X_{s}^{\varepsilon}|+|X_{t-\delta}^{\varepsilon}|)ds dt\right)\nonumber\\
\leq\!\!\!\!\!\!\!\!&&C\delta\mathbb{E}\left[\sup_{s\in[0,T]}(1+|X_{s}^{\varepsilon}|^2)\right]+C\mathbb{E}\left[\sup_{s\in[0,T]}|X_{s}^{\varepsilon}|\int^T_{\delta}\int^t_{t-\delta}|Y^{\vare}_s|dsdt\right]\nonumber\\
\leq\!\!\!\!\!\!\!\!&&C\delta\mathbb{E}\left[\sup_{s\in[0,T]}(1+|X_{s}^{\varepsilon}|^2)\right]\!\!+\!\!C_T\delta^{1/2}\mathbb{E}\left[\sup_{s\in[0,T]}|X_{s}^{\varepsilon}|^2\right]^{1/2}\!\!\!\!\!\mathbb{E}\left(\int^T_{\delta}\int^t_{t-\delta}|Y_{s}^{\varepsilon}|^2dsdt\right)^{1/2}\nonumber\\
\leq\!\!\!\!\!\!\!\!&&C_{T}\delta(1+|x|^2+|y|^2)
\end{eqnarray}
and
\begin{eqnarray}\label{REGX2a}
\mathbb{E}\left(\int^{T}_{\delta}|I_{3}(t)|dt\right)\leq\!\!\!\!\!\!\!\!&&C\mathbb{E}\left(\int^{T}_{\delta}\int_{t-\delta} ^{t}(1+|X_{s}^{\varepsilon}|^2)ds dt\right)\nonumber\\
\leq\!\!\!\!\!\!\!\!&&C_T\delta\mathbb{E}\left[\sup_{s\in[0,T]}(1+|X_{s}^{\varepsilon}|^2)\right]\nonumber\\
\leq\!\!\!\!\!\!\!\!&&C_{T}\delta(1+|x|^2+|y|^2).
\end{eqnarray}

For $I_{4}(t)$, applying Burkholder-Davies-Gundy's inequality implies
\begin{eqnarray}  \label{REGX3}
\mathbb{E}\left(\int^{T}_{\delta}|I_{4}(t)|dt\right)\leq\!\!\!\!\!\!\!\!&&C\mathbb{E}\int^{T}_{\delta}\left[\int_{t-\delta} ^{t}|\sigma_1(X_{s}^{\varepsilon})|^2_{\mathcal{L}_{Q_1}}|X_{s}^{\varepsilon}-X_{t-\delta}^{\varepsilon}|^2 ds\right]^{1/2}dt\nonumber\\
\leq\!\!\!\!\!\!\!\!&&C_T\left[\mathbb{E}\int^{T}_{\delta}\int_{t-\delta} ^{t}(1+|X_{s}^{\varepsilon}|^2)|X_{s}^{\varepsilon}-X_{t-\delta}^{\varepsilon}|^2dsdt\right]^{1/2}\nonumber\\
\leq\!\!\!\!\!\!\!\!&&C_{T}\delta^{1/2}\left[\mathbb{E}\sup_{s\in[0,T]}\left(1+|X_{s}^{\varepsilon}|^4\right)\right]^{1/2}\nonumber\\
\leq\!\!\!\!\!\!\!\!&&C_{T}\delta^{1/2}(1+|x|^2+|y|^2).
\end{eqnarray}
Combining estimates \eref{F3.9}-\eref{REGX3} together, we can deduce that
\begin{eqnarray}
\mathbb{E}\left(\int^{T}_{\delta}|X_{t}^{\varepsilon}-X_{t-\delta}^{\varepsilon}|^2dt\right)\leq\!\!\!\!\!\!\!\!&&C_{T}\delta^{1/2}(1+|x|^3+|y|^3). \label{F3.13}
\end{eqnarray}
By the similar argument above, we can also get
\begin{eqnarray}
\mathbb{E}\left(\int^{T}_{\delta}|X_{t(\delta)}^{\varepsilon}-X_{t-\delta}^{\varepsilon}|^2dt\right)\leq\!\!\!\!\!\!\!\!&&C_{T}\delta^{1/2}(1+|x|^3+|y|^3). \label{F3.14}
\end{eqnarray}
Hence, \eref{F3.8}, \eref{F3.13} and \eref{F3.14} implies \eref{F3.7} holds. The proof is complete.
\end{proof}


\vskip 0.3cm

\subsection{ Estimates of auxiliary process $(\hat{X}_{t}^{\varepsilon},\hat{Y}_{t}^{\varepsilon})$}

Following the idea inspired by Khasminskii in \cite{K1}, we introduce an auxiliary process $(\hat{X}_{t}^{\varepsilon},\hat{Y}_{t}^{\varepsilon})\in{H}\times H$ and divide $[0,T]$ into intervals of size $\delta$, where $\delta$ is a fixed positive number depends on $\vare$ and will be chosen later. We then construct a process $\hat{Y}_{t}^{\varepsilon}$, with initial value $\hat{Y}_{0}^{\varepsilon}=Y^{\varepsilon}_{0}=y$, and for any $k\in \mathbb{N}$ and $t\in[k\delta,\min((k+1)\delta,T)]$,
\begin{eqnarray}
\hat{Y}_{t}^{\varepsilon}=\hat{Y}_{k\delta}^{\varepsilon}+\frac{1}{\varepsilon}\int_{k\delta}^{t}A\hat{Y}_{s}^{\varepsilon}ds+\frac{1}{\varepsilon}\int_{k\delta}^{t}
g(X_{k\delta}^{\varepsilon},\hat{Y}_{s}^{\varepsilon})ds+\frac{1}{\sqrt{\varepsilon}}\int_{k\delta}^{t}\sigma_2(X_{k\delta}^{\varepsilon},\hat{Y}_{s}^{\varepsilon})dW^{Q_{2}}_s,\label{4.6a}
\end{eqnarray}
which is equivalent to
$$
d\hat{Y}_{t}^{\vare}=\frac{1}{\vare}\left[A\hat{Y}_{t}^{\vare}+g\left(X^{\vare}_{t(\delta)},\hat{Y}_{t}^{\vare}\right)\right]dt+\frac{1}{\sqrt{\vare}}\sigma_2\left(X^{\vare}_{t(\delta)},\hat{Y}_{t}^{\vare}\right)dW^{Q_2}_t,\quad \hat{Y}_{0}^{\vare}=y.
$$
Also, we define the process $\hat{X}_{t}^{\varepsilon}$ by integral
\begin{align}
\hat{X}_{t}^{\varepsilon}=x+\int_{0}^{t}A\hat{X}_{s}^{\varepsilon}ds-\int_{0}^{t}B(\hat{X}_{s}^{\varepsilon})ds+\int_{0}^{t}
f(X_{s(\delta)}^{\varepsilon},\hat{Y}_{s}^{\varepsilon})ds+\int_{0}^{t}\sigma_1(\hat{X}_{s}^{\varepsilon})dW^{Q_{1}}_s,\label{4.6b}
\end{align}
for $t\in[0,T]$. We remark that on each interval
the fast component $\hat{Y}_{t}^{\varepsilon}$ does not depend on
the slow component $\hat{X}_{t}^{\varepsilon}$, but only on the
value of $X_{t}^{\vare}$ at the first point of the interval.

\vspace{0.2cm}
By the construction of $(\hat{X}_{t}^{\varepsilon},
\hat{Y}_{t}^{\varepsilon})$, we can obtain the following
estimates which will be used below. Because the proof almost follows
the same steps in Lemma \ref{PMY}, we omit the proof here.

\begin{lemma} \label{MDYa}
For any $x, y\in H$, $T>0$ and $\vare\in(0,1)$, there exists a constant
$C_{T}>0$ such that
\begin{eqnarray}
\sup_{t\in[0,T]}\mathbb{E}|\hat{Y}_{t}^{\vare}|^2\leq
C_{T}(1+|x|^2+|y|^2) \label{3.13a}
\end{eqnarray}
and
\begin{eqnarray}
\mathbb{E}\left(\sup_{t\in[0,T]}|\hat{X}_{t}^{\varepsilon}|^2\right)+\EE\left(\int_0^T\|\hat{X}_{t}^{\varepsilon}\|_1^2dt\right)\leq C_T(|x|^2+|y|^2+1).\label{4.5a}
\end{eqnarray}
\end{lemma}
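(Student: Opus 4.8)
The plan is to mimic closely the proof of Lemma \ref{PMY}, since the auxiliary processes \eref{4.6a}--\eref{4.6b} differ from the original system \eref{main equation} only by evaluating the drift coefficients $f,g$ (and the diffusion $\sigma_2$) at the frozen spatial point $X^{\vare}_{t(\delta)}$ rather than at $X^{\vare}_t$; but by Lemma \ref{PMY} the quantity $\sup_{t\in[0,T]}\EE|X^{\vare}_{t(\delta)}|^{2}\le \EE(\sup_{t\in[0,T]}|X^{\vare}_t|^{2})$ is already under control by $C_T(1+|x|^2+|y|^2)$, so the frozen argument causes no new trouble. First I would treat $\hat Y^{\vare}_t$: fix $k$ and $t\in[k\delta,\min((k+1)\delta,T)]$, apply It\^o's formula to $|\hat Y^{\vare}_t|^2$, use $\langle A u,u\rangle=-\|u\|_1^2\le-\lambda_1|u|^2$ together with conditions \ref{A1} and \ref{A3} exactly as in Lemma \ref{PMY}, and obtain a differential inequality of the form $\frac{d}{dt}\EE|\hat Y^{\vare}_t|^2\le-\frac{\gamma}{\vare}\EE|\hat Y^{\vare}_t|^2+\frac{C}{\vare}\EE|X^{\vare}_{t(\delta)}|^2+\frac{C}{\vare}$ on each subinterval. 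Since $\hat Y^{\vare}$ is continuous at the grid points $k\delta$, this inequality in fact holds for all $t\in[0,T]$, and the comparison theorem yields
\begin{eqnarray*}
\EE|\hat Y^{\vare}_t|^2\le |y|^2 e^{-\frac{\gamma}{\vare}t}+\frac{C}{\vare}\int_0^t e^{-\frac{\gamma}{\vare}(t-s)}\big(1+\EE|X^{\vare}_{s(\delta)}|^2\big)\,ds,
\end{eqnarray*}
and bounding $\EE|X^{\vare}_{s(\delta)}|^2\le\EE(\sup_{r\in[0,T]}|X^{\vare}_r|^2)\le C_T(1+|x|^2+|y|^2)$ via \eref{F3.1} and integrating the kernel gives \eref{3.13a}.

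Next I would handle $\hat X^{\vare}_t$ in the same spirit as the $X^{\vare}$-part of Lemma \ref{PMY}: apply It\^o's formula to $|\hat X^{\vare}_t|^2$ using \eref{4.6b}, note that the nonlinear term drops out because $\langle B(\hat X^{\vare}_s),\hat X^{\vare}_s\rangle=0$ by Lemma \ref{Property B1}, keep the coercive term $-2\|\hat X^{\vare}_s\|_1^2$ from $\langle A\hat X^{\vare}_s,\hat X^{\vare}_s\rangle$, estimate the $f$-term by condition \ref{A1} and Young's inequality splitting off $|X^{\vare}_{s(\delta)}|^2+|\hat Y^{\vare}_s|^2$, estimate the $\sigma_1$ Itô-correction term by the linear growth implicit in \ref{A1}, and control the stochastic integral by the Burkholder--Davis--Gundy inequality. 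Taking $\EE\sup_{t\in[0,T]}$ and absorbing the BDG contribution, one arrives at
\begin{eqnarray*}
\EE\Big(\sup_{t\in[0,T]}|\hat X^{\vare}_t|^2\Big)+\EE\Big(\int_0^T\|\hat X^{\vare}_t\|_1^2\,dt\Big)\le C_T(1+|x|^2+|y|^2)+C_T\int_0^T\EE|\hat X^{\vare}_t|^2\,dt+C_T\int_0^T\EE|\hat Y^{\vare}_t|^2\,dt,
\end{eqnarray*}
and then \eref{3.13a} bounds the last integral while Gronwall's inequality closes the estimate to give \eref{4.5a}.

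The only genuinely new point compared with Lemma \ref{PMY} is the presence of the frozen argument $X^{\vare}_{t(\delta)}$ inside $f,g,\sigma_2$; I expect this to be the main thing to check carefully, but it is harmless precisely because all the bounds we need on $X^{\vare}_{t(\delta)}$ follow from the already-established a priori bound \eref{F3.1} on $\sup_{t\in[0,T]}|X^{\vare}_t|^{2}$ (and, if one wanted higher moments, from the full strength of \eref{F3.1} with $p\ge1$). A secondary technical point is that $\hat Y^{\vare}$ satisfies its SDE only piecewise on $[k\delta,(k+1)\delta)$, so the It\^o/comparison argument must be run subinterval by subinterval and glued using continuity of $\hat Y^{\vare}$ at the grid points; this is routine. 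Since every estimate here is a verbatim repetition of a step already carried out in the proof of Lemma \ref{PMY}, it is reasonable (as the authors do) to state the lemma and omit the detailed computation.
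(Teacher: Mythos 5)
Your proposal is correct and follows exactly the route the paper intends: the authors omit the proof of Lemma \ref{MDYa}, stating only that it ``almost follows the same steps in Lemma \ref{PMY}'', and your reconstruction (It\^o's formula plus dissipativity and the comparison theorem for $\hat Y^{\vare}$, then It\^o's formula with $\langle B(u),u\rangle=0$, Burkholder--Davis--Gundy and Gronwall for $\hat X^{\vare}$, with the frozen argument $X^{\vare}_{t(\delta)}$ controlled by \eref{F3.1}) is precisely that adaptation. The two technical points you flag --- handling $X^{\vare}_{t(\delta)}$ via the sup bound and gluing the piecewise It\^o argument at the grid points --- are indeed the only things to check, and you handle both correctly.
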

\vskip 0.3cm
Now, we will establish the difference between $Y_{t}^{\varepsilon}$ and $\hat{Y}_{t}^{\varepsilon}$, and furthermore the difference $X^{\varepsilon}_t$ and $\hat{X}_{t}^{\varepsilon}.$
\begin{lemma} \label{DEY}
For any $x, y\in H$, $T>0$ and $\vare\in(0,1)$, there exists a constant $C_{R,T}>0$ such that
\begin{eqnarray}
\mathbb{E}\left(\int_0^{T}|Y_{t}^{\varepsilon}-\hat{Y}_{t}^{\varepsilon}|^2dt\right)\leq C_{T}\left(1+|x|^3+|y|^3\right)\delta^{1/2}. \label{3.14}
\end{eqnarray}
\end{lemma}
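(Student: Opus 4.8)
\textbf{Proof proposal for Lemma \ref{DEY}.}
The plan is to estimate $Z_t^\vare:=Y_t^\vare-\hat Y_t^\vare$ by an energy (It\^o) argument on each discretization interval, exploiting the dissipativity condition \ref{A4} to absorb the nonlinear and noise contributions, and then to feed in the weak time-regularity estimate from Lemma \ref{COX} to control the error coming from freezing the slow input at $X^\vare_{t(\delta)}$. First I would write the equation satisfied by $Z_t^\vare$: from \eref{main equation} and \eref{4.6a},
\begin{equation*}
dZ_t^\vare=\frac1\vare\left[AZ_t^\vare+g(X_t^\vare,Y_t^\vare)-g(X_{t(\delta)}^\vare,\hat Y_t^\vare)\right]dt+\frac1{\sqrt\vare}\left[\sigma_2(X_t^\vare,Y_t^\vare)-\sigma_2(X_{t(\delta)}^\vare,\hat Y_t^\vare)\right]dW_t^{Q_2},
\end{equation*}
with $Z_0^\vare=0$. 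Applying It\^o's formula to $|Z_t^\vare|^2$, taking expectations, and using $\langle AZ,Z\rangle=-\|Z\|_1^2\le-\lambda_1|Z|^2$ together with the splitting $g(X_t^\vare,Y_t^\vare)-g(X_{t(\delta)}^\vare,\hat Y_t^\vare)=[g(X_t^\vare,Y_t^\vare)-g(X_{t(\delta)}^\vare,Y_t^\vare)]+[g(X_{t(\delta)}^\vare,Y_t^\vare)-g(X_{t(\delta)}^\vare,\hat Y_t^\vare)]$ (and similarly for $\sigma_2$), condition \ref{A1} gives the Lipschitz bounds $|g(X_{t(\delta)}^\vare,Y_t^\vare)-g(X_{t(\delta)}^\vare,\hat Y_t^\vare)|\le L_g|Z_t^\vare|$ and $|\sigma_2(X_{t(\delta)}^\vare,Y_t^\vare)-\sigma_2(X_{t(\delta)}^\vare,\hat Y_t^\vare)|_{\mathcal L_{Q_2}}\le L_{\sigma_2}|Z_t^\vare|$, while the frozen-argument differences are bounded by $C|X_t^\vare-X_{t(\delta)}^\vare|$. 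After a Young's inequality to split the cross terms, the coefficient of $\EE|Z_t^\vare|^2$ becomes $-\frac1\vare(2\lambda_1-2L_g-L_{\sigma_2}^2)+\frac{c}{\vare}$ for a small $c$ we can afford; by \ref{A4} this is $\le-\frac{\kappa}{\vare}$ for some $\kappa>0$, so
\begin{equation*}
\frac{d}{dt}\EE|Z_t^\vare|^2\le-\frac\kappa\vare\EE|Z_t^\vare|^2+\frac{C}\vare\EE|X_t^\vare-X_{t(\delta)}^\vare|^2.
\end{equation*}

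Next I would integrate this differential inequality. Since $Z_0^\vare=0$, Gronwall (or the variation-of-constants formula) yields
\begin{equation*}
\EE|Z_t^\vare|^2\le\frac{C}\vare\int_0^t e^{-\frac\kappa\vare(t-s)}\,\EE|X_s^\vare-X_{s(\delta)}^\vare|^2\,ds,
\end{equation*}
and then integrating in $t$ over $[0,T]$ and exchanging the order of integration (Fubini), the inner integral $\frac1\vare\int_s^T e^{-\frac\kappa\vare(t-s)}dt\le\frac1\kappa$ is bounded uniformly in $\vare$, so
\begin{equation*}
\EE\left(\int_0^T|Y_t^\vare-\hat Y_t^\vare|^2dt\right)\le C\int_0^T\EE|X_s^\vare-X_{s(\delta)}^\vare|^2\,ds.
\end{equation*}
Finally, invoking Lemma \ref{COX} bounds the right-hand side by $C_T\delta^{1/2}(1+|x|^3+|y|^3)$, which is exactly \eref{3.14}.

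I expect the main subtlety to be the bookkeeping in the energy estimate that turns the dissipativity hypothesis \ref{A4} into a genuine exponential decay rate: one must be careful that, after using Young's inequality to peel off the contributions of $|X_t^\vare-X_{t(\delta)}^\vare|$ from the $g$-term, the $\sigma_2$-term, and the quadratic-variation correction (the $|\sigma_2^*Z|_{U_2}^2$ piece, handled crudely by $|\sigma_2|_{\mathcal L_{Q_2}}^2$), the residual constant multiplying $\frac1\vare\EE|Z_t^\vare|^2$ stays strictly below $2\lambda_1-2L_g-L_{\sigma_2}^2$, so that $\kappa>0$. A minor point is that $\hat Y_t^\vare$ is defined piecewise, but since the above It\^o computation is performed on each subinterval $[k\delta,(k+1)\delta)$ and $Z^\vare$ is continuous across endpoints, the differential inequality holds on all of $[0,T]$ with no boundary terms; everything else (BDG for the martingale part contributes nothing after taking expectations, Fubini, the moment bounds) is routine given Lemmas \ref{PMY} and \ref{COX}.
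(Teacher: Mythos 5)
Your proposal is correct and follows essentially the same route as the paper: the same It\^o/energy estimate for $Y_t^\vare-\hat Y_t^\vare$, the same use of the dissipativity condition \ref{A4} to obtain the differential inequality $\frac{d}{dt}\EE|Z_t^\vare|^2\le-\frac{\kappa}{\vare}\EE|Z_t^\vare|^2+\frac{C}{\vare}\EE|X_t^\vare-X_{t(\delta)}^\vare|^2$, followed by the variation-of-constants bound, Fubini, and Lemma \ref{COX}. The only cosmetic difference is that for the exponent $p=1$ the It\^o correction is just $|\sigma_2(\cdot)-\sigma_2(\cdot)|^2_{\mathcal{L}_{Q_2}}$, so the extra $|\sigma_2^*Z|_{U_2}^2$ piece you mention does not actually arise here.
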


\begin{proof}
Let $\rho^{\vare}_t:=Y_{t}^{\varepsilon}-\hat{Y}_{t}^{\varepsilon}$
Then, it is easy to see that $\rho^{\vare}_t$ satisfies the following equation:
\begin{equation}\left\{\begin{array}{l}
\displaystyle d\rho^{\vare}_t=\frac{1}{\varepsilon}\left[A\rho^{\vare}_t+g(X_t^\varepsilon,Y_t^\varepsilon)-g(X_{t(\delta)}^\varepsilon,\hat{Y}_t^\varepsilon)\right]dt+\frac{1}{\sqrt{\vare}}\left[\sigma_2(X_t^\varepsilon,Y_t^\varepsilon)-\sigma_2
(X_{t(\delta)}^\varepsilon,\hat{Y}_t^\varepsilon)\right]dW_t^{Q_2},\\
\rho^{\vare}_0=0,\end{array}\right. \nonumber
\end{equation}
Thus, applying It\^{o}'s formula and taking expectation, we have
\begin{eqnarray*}
\EE|\rho^{\vare}_t|^2
=\!\!\!\!\!\!\!\!&&-\frac{2}{\varepsilon}\int^t_0\EE\|\rho^{\vare}_s\|_1^2ds+\frac{2}{\varepsilon}\int^t_0\EE\langle g(X_s^\varepsilon,Y_s^\varepsilon)-g(X_{s(\delta)}^\varepsilon,\hat{Y}_s^\varepsilon),\rho^{\vare}_s\rangle ds \nonumber\\
\!\!\!\!\!\!\!\!&&+\frac{1}{\varepsilon}\int^t_0\EE\|\sigma_2(X_s^\varepsilon,Y_s^\varepsilon)-\sigma_2
(X_{s(\delta)}^\varepsilon,\hat{Y}_s^\varepsilon)\|^2ds.
\end{eqnarray*}
Then by condition \ref{A4}, there exits $\beta>0$ such that
\begin{eqnarray*}
\frac{d}{dt}\EE|\rho^{\vare}_t|^2
=\!\!\!\!\!\!\!\!&&-\frac{2}{\varepsilon}\EE\|\rho^{\vare}_t\|_1^2+\frac{2}{\varepsilon}\EE\langle g(X_t^\varepsilon,Y_t^\varepsilon)-g(X_{t(\delta)}^\varepsilon,\hat{Y}_t^\varepsilon),\rho^{\vare}_t\rangle \nonumber\\
\!\!\!\!\!\!\!\!&&+\frac{1}{\varepsilon}\EE\|\sigma_2(X_t^\varepsilon,Y_t^\varepsilon)-\sigma_2
(X_{t(\delta)}^\varepsilon,\hat{Y}_t^\varepsilon)\|^2\nonumber\\
\leq\!\!\!\!\!\!\!\!&&-\frac{2\lambda_1}{\varepsilon}\EE|\rho^{\vare}_t|^2+\frac{C}{\varepsilon}\EE\left(|X_t^\varepsilon-X_{t(\delta)}^\varepsilon|\cdot|\rho^{\vare}_t|\right)+\frac{2L_g}{\varepsilon}\EE|\rho^{\vare}_t|^2  \nonumber\\
\!\!\!\!\!\!\!\!&&+\frac{1}{\varepsilon}\EE\left(C|X_t^\varepsilon-X_{t(\delta)}^\varepsilon|+L_{\sigma_2}|\rho^{\vare}_t|\right)^2\nonumber\\
\leq\!\!\!\!\!\!\!\!&&-\frac{\beta}{\varepsilon}\EE|\rho^{\vare}_t|^2+\frac{C}{\varepsilon}\EE|X_t^\varepsilon-X_{t(\delta)}^\varepsilon|^2.\nonumber
\end{eqnarray*}
Therefore, comparison theorem yields that
\begin{eqnarray*}
\EE|\rho^{\vare}_t|^2\leq\frac{C}{\varepsilon}\int_0^te^{-\frac{\beta(t-s)}{\vare}}\EE|X_s^\varepsilon-X_{s(\delta)}^\varepsilon|^2ds.
\end{eqnarray*}
Then by Fubini's theorem, for any $T>0$,
\begin{eqnarray*}
\EE\left(\int_0^T|\rho^{\vare}_t|^2dt\right)\leq\!\!\!\!\!\!\!\!&& \frac{C}{\varepsilon}\int_0^T\int^t_0e^{-\frac{\beta(t-s)}{\vare}}\EE|X_s^\varepsilon-X_{s(\delta)}^\varepsilon|^2dsdt\nonumber\\
=\!\!\!\!\!\!\!\!&&  \frac{C}{\varepsilon}\EE\left[\int_0^T|X_s^\varepsilon-X_{s(\delta)}^\varepsilon|^2\left(\int^T_s e^{-\frac{\beta(t-s)}{\vare}}dt\right)ds\right]\nonumber\\
\leq\!\!\!\!\!\!\!\!&& C\EE\left(\int_0^T|X_s^\varepsilon-X_{s(\delta)}^\varepsilon|^2 ds\right).
\end{eqnarray*}
By Lemma \ref{COX}, we obtain
\begin{eqnarray*}
\mathbb{E}\left(\int_0^{T}|\rho^{\vare}_t|^2dt\right)\leq C_{T}(|x|^3+|y|^3+1)\delta^{1/2}.
\end{eqnarray*}
The proof is complete.
\end{proof}

In order to estimate the difference process $|X_{t}^{\vare}-\hat{X}_{t}^{\vare}|$,  we need to construct the following stopping time, i.e., for fixed $\vare\in(0,1)$, $R>0$,
$$
\tau_R^\varepsilon:=\inf\left\{t\geq0:\int_0^t\|X_s^\varepsilon\|_1^2ds\geq R\right\}.
$$
Then we will control  $|X_{t}^{\vare}-\hat{X}_{t}^{\vare}|$ when $t$ is before this stopping time.
\begin{lemma} \label{DEX}
For any $x, y\in H$, $T>0$ and $\vare\in(0,1)$, the following fact holds.
\begin{align*}
\mathbb{E}\Big(\sup_{t\in[0, T\wedge\tau_R^\varepsilon]}|X_{t}^{\vare}-\hat{X}_{t}^{\vare}|^2\Big)\leq C_{R,T}\left(1+|x|^3+|y|^3\right)\delta^{1/2}.
\end{align*}
\end{lemma}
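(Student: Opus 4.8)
The plan is to estimate the difference process $\Theta^{\vare}_t := X^{\vare}_t - \hat{X}^{\vare}_t$ directly via It\^o's formula applied to $|\Theta^{\vare}_t|^2$, stopping at $\tau^{\vare}_R$ so that all terms involving the $\|\cdot\|_1$-norm of $X^{\vare}$ are controlled. Subtracting \eref{4.6b} from the mild/variational form of \eref{main equation} for $X^{\vare}$, we get
\begin{equation*}
d\Theta^{\vare}_t = \left[A\Theta^{\vare}_t - \big(B(X^{\vare}_t) - B(\hat{X}^{\vare}_t)\big) + f(X^{\vare}_t, Y^{\vare}_t) - f(X^{\vare}_{t(\delta)}, \hat{Y}^{\vare}_t)\right]dt + \big(\sigma_1(X^{\vare}_t) - \sigma_1(\hat{X}^{\vare}_t)\big)dW^{Q_1}_t,
\end{equation*}
and hence
\begin{align*}
|\Theta^{\vare}_t|^2 = {}&2\int_0^t \langle A\Theta^{\vare}_s, \Theta^{\vare}_s\rangle ds - 2\int_0^t \langle B(X^{\vare}_s) - B(\hat{X}^{\vare}_s), \Theta^{\vare}_s\rangle ds + 2\int_0^t \langle f(X^{\vare}_s, Y^{\vare}_s) - f(X^{\vare}_{s(\delta)}, \hat{Y}^{\vare}_s), \Theta^{\vare}_s\rangle ds \\
&+ \int_0^t |\sigma_1(X^{\vare}_s) - \sigma_1(\hat{X}^{\vare}_s)|^2_{\mathcal{L}_{Q_1}} ds + 2\int_0^t \langle \Theta^{\vare}_s, (\sigma_1(X^{\vare}_s) - \sigma_1(\hat{X}^{\vare}_s))dW^{Q_1}_s\rangle.
\end{align*}
The dissipative term gives $-2\|\Theta^{\vare}_s\|_1^2$, which we keep in reserve for absorbing the bilinear contribution.

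The main obstacle is the nonlinear term $\langle B(X^{\vare}_s) - B(\hat{X}^{\vare}_s), \Theta^{\vare}_s\rangle$. Using the antisymmetry $b(u,v,v)=0$ (from the appendix) and the decomposition $B(X^{\vare}_s) - B(\hat{X}^{\vare}_s) = B(\Theta^{\vare}_s, X^{\vare}_s) + B(\hat{X}^{\vare}_s, \Theta^{\vare}_s)$, the second piece pairs to zero against $\Theta^{\vare}_s$, so only $b(\Theta^{\vare}_s, X^{\vare}_s, \Theta^{\vare}_s)$ survives. I would bound this by the 2D interpolation/Ladyzhenskaya-type estimate listed in the appendix, of the form $|b(\Theta, X, \Theta)| \leq C|\Theta|\,\|\Theta\|_1\,\|X\|_1$, and then apply Young's inequality to split it as $\tfrac12\|\Theta^{\vare}_s\|_1^2 + C\|X^{\vare}_s\|_1^2 |\Theta^{\vare}_s|^2$; the first part is absorbed by the dissipation, and the coefficient $\|X^{\vare}_s\|_1^2$ in the second is exactly what the stopping time $\tau^{\vare}_R$ makes integrable (bounded by $R$). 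This is the only place the stopping time is essential, and it is why the final bound carries the constant $C_{R,T}$ rather than $C_T$.

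For the remaining terms: the $f$-difference is split as $f(X^{\vare}_s, Y^{\vare}_s) - f(X^{\vare}_{s(\delta)}, \hat{Y}^{\vare}_s) = [f(X^{\vare}_s, Y^{\vare}_s) - f(\hat{X}^{\vare}_s, \hat{Y}^{\vare}_s)] + [f(\hat{X}^{\vare}_s,\hat{Y}^{\vare}_s) - f(X^{\vare}_{s(\delta)}, \hat{Y}^{\vare}_s)]$; by the Lipschitz condition \ref{A1} and Young's inequality the first bracket contributes $C(|\Theta^{\vare}_s|^2 + |Y^{\vare}_s - \hat{Y}^{\vare}_s|^2)$ and the second contributes $C(|\Theta^{\vare}_s|^2 + |X^{\vare}_s - X^{\vare}_{s(\delta)}|^2 + |\Theta^{\vare}_s|^2)$ after inserting $\pm X^{\vare}_s$. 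The $\sigma_1$-difference term is $\leq C|\Theta^{\vare}_s|^2$ by \ref{A1}, and the stochastic integral is handled by Burkholder-Davis-Gundy: $\EE\sup_{t\leq T\wedge\tau^{\vare}_R}|\int_0^t \langle \Theta^{\vare}_s, (\sigma_1(X^{\vare}_s)-\sigma_1(\hat{X}^{\vare}_s))dW^{Q_1}_s\rangle| \leq \tfrac14\EE\sup_{t\leq T\wedge\tau^{\vare}_R}|\Theta^{\vare}_t|^2 + C\EE\int_0^{T\wedge\tau^{\vare}_R}|\Theta^{\vare}_s|^2 ds$, with the sup-term absorbed into the left side.

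Collecting everything, taking the supremum over $t\in[0, T\wedge\tau^{\vare}_R]$ and then expectations, we arrive at
\begin{equation*}
\EE\Big(\sup_{t\in[0, T\wedge\tau^{\vare}_R]}|\Theta^{\vare}_t|^2\Big) \leq C_{R}\int_0^T \EE\Big(\sup_{r\in[0, s\wedge\tau^{\vare}_R]}|\Theta^{\vare}_r|^2\Big)\,ds + C\,\EE\int_0^{T}|Y^{\vare}_s - \hat{Y}^{\vare}_s|^2 ds + C\,\EE\int_0^{T}|X^{\vare}_s - X^{\vare}_{s(\delta)}|^2 ds.
\end{equation*}
The last two terms are each bounded by $C_T(1+|x|^3+|y|^3)\delta^{1/2}$ by Lemma \ref{DEY} and Lemma \ref{COX} respectively, and then Gr\"onwall's inequality (in the integral-of-sup form, which is valid since the integrand is nondecreasing in $s$) yields
\begin{equation*}
\EE\Big(\sup_{t\in[0, T\wedge\tau^{\vare}_R]}|X^{\vare}_t - \hat{X}^{\vare}_t|^2\Big) \leq C_{R,T}(1+|x|^3+|y|^3)\delta^{1/2},
\end{equation*}
which is the claim. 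I expect the only genuinely delicate point to be the treatment of the trilinear term and verifying that the Young's-inequality split leaves enough dissipation; everything else is routine Lipschitz bookkeeping plus BDG.
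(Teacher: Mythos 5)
Your setup is the same as the paper's: the same difference process, the same It\^o computation, the same treatment of the trilinear term (your antisymmetry-plus-Ladyzhenskaya argument is exactly how the paper's Corollary \ref{Property B2}(3) bound $\langle B(u)-B(v),u-v\rangle\leq C|u-v|\,\|u-v\|_1\|u\|_1$ is obtained, and the paper simply cites it), the same Lipschitz bookkeeping for $f$ and $\sigma_1$, the same BDG absorption, and the same final appeal to Lemmas \ref{COX} and \ref{DEY}. However, there is a genuine gap at the Gr\"onwall step. After It\^o's formula the dangerous term is $C\int_0^{t}\|X_s^\varepsilon\|_1^2|\Theta_s^\varepsilon|^2\,ds$, whose coefficient $\|X_s^\varepsilon\|_1^2$ is random and is controlled by the stopping time only through its \emph{time integral} ($\int_0^{T\wedge\tau_R^\varepsilon}\|X_s^\varepsilon\|_1^2\,ds\leq R$), not pointwise. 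If you first take the supremum and expectations, as you propose, the best you can do with this term is
\begin{equation*}
\mathbb{E}\int_0^{T\wedge\tau_R^\varepsilon}\|X_s^\varepsilon\|_1^2|\Theta_s^\varepsilon|^2\,ds
\leq R\,\mathbb{E}\Big(\sup_{s\in[0,T\wedge\tau_R^\varepsilon]}|\Theta_s^\varepsilon|^2\Big),
\end{equation*}
which cannot be absorbed into the left-hand side (the constant $R$ is large), and it does \emph{not} yield your displayed inequality with the term $C_R\int_0^T\mathbb{E}\big(\sup_{r\in[0,s\wedge\tau_R^\varepsilon]}|\Theta_r^\varepsilon|^2\big)\,ds$; ordinary Gr\"onwall in expectation is simply not available against a random, merely integrable coefficient.

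The repair is exactly the order of operations in the paper: apply Gr\"onwall \emph{pathwise} first, before taking expectations. Pathwise, the inequality
\begin{equation*}
|\Theta_t^\varepsilon|^2\leq C\int_0^t(1+\|X_s^\varepsilon\|_1^2)|\Theta_s^\varepsilon|^2\,ds+(\text{driving terms})
\end{equation*}
gives, on $[0,T\wedge\tau_R^\varepsilon]$, the factor $e^{C\int_0^{T\wedge\tau_R^\varepsilon}(1+\|X_s^\varepsilon\|_1^2)\,ds}\leq e^{C(T+R)}$, which is a deterministic constant $C_{R,T}$. Only then do you take the supremum and expectations, apply BDG to the martingale among the driving terms (absorbing $\tfrac12\mathbb{E}\sup|\Theta^\varepsilon|^2$ and leaving $C_{R,T}\mathbb{E}\int_0^{T\wedge\tau_R^\varepsilon}|\Theta_s^\varepsilon|^2\,ds$), and run a second, standard Gr\"onwall in expectation on that remaining term. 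With this reordering the rest of your argument goes through verbatim and reproduces the paper's proof.
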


\begin{proof}
Put $Z_t^\varepsilon:=X_{t}^{\vare}-\hat{X}_{t}^{\vare}$, then we have $Z^{\vare}_0=0$ and
\begin{eqnarray*}
dZ_t^\varepsilon=\!\!\!\!\!\!\!\!&&AZ_t^\varepsilon dt-\left[B(X_t^\varepsilon)-B(\hat{X}_t^\varepsilon)\right]dt
+\left[f(X_t^\varepsilon,Y_t^\varepsilon)-f(X_{t(\delta)}^\varepsilon,\hat{Y}_t^\varepsilon)\right]dt
\\\!\!\!\!\!\!\!\!&&+\left[\sigma_1(X_t^\varepsilon)-\sigma_1(\hat{X}_t^\varepsilon)\right]dW_t^{Q_1}.
\end{eqnarray*}
By It\^{o}'s formula and Corollary \ref{Property B2}, we have
\begin{eqnarray}
|Z_t^\varepsilon|^2
=\!\!\!\!\!\!\!\!&&\int_0^t-2\|Z_s^\varepsilon\|_1^2ds-2\int_0^t\langle B(X_s^\varepsilon)-B(\hat{X}_s^\varepsilon), Z_s^\varepsilon\rangle ds\nonumber\\
\!\!\!\!\!\!\!\!&&+2\int_0^t\left\langle\left[f(X_s^\varepsilon,Y_s^\varepsilon)-f(X_{s(\delta)}^\varepsilon,\hat{Y}_s^\varepsilon)\right],Z_s^\varepsilon\right\rangle ds\nonumber\\
\!\!\!\!\!\!\!\!&&+2\int_0^t\left\langle Z_s^\varepsilon,[\sigma_1(X_s^\varepsilon)-\sigma_1(\hat{X}_s^\varepsilon)]dW_s^{Q_1}\right\rangle+\int_0^t|\sigma_1(X_s^\varepsilon)
-\sigma_1(\hat{X}_s^\varepsilon)|_{\mathcal{L}_{Q_1}}^2ds\nonumber\\
\leq\!\!\!\!\!\!\!\!&&-\int_0^t\|Z_s^\varepsilon\|_1^2ds+C\int_0^t|Z_s^\varepsilon|^2\|X_s^\varepsilon\|_1^2ds+C\int_0^t|X_s^\varepsilon-X_{s(\delta)}^\varepsilon|^2ds\nonumber\\
\!\!\!\!\!\!\!\!&&+C\int_0^t|Y_s^\varepsilon-\hat{Y}_s^\varepsilon|^2ds+C\int_0^t|Z_s^\varepsilon|^2ds+2\int_0^t\langle Z_s^\varepsilon,[\sigma_1(X_s^\varepsilon)-\sigma_1(\hat{X}_s^\varepsilon)]dW_s^{Q_1}\rangle,\nonumber
\end{eqnarray}
which yields
\begin{eqnarray*}
\sup_{t\in[0, T\wedge\tau_R^\varepsilon]}|Z_t^\varepsilon|^2
\leq\!\!\!\!\!\!\!\!&&C\int_0^{T\wedge\tau_R^\varepsilon}(1+\|X_s^\varepsilon\|_1^2)|Z_s^\varepsilon|^2ds
+C\int_0^{T\wedge\tau_R^\varepsilon}|X_s^\varepsilon-X_{s(\delta)}^\varepsilon|^2ds
\\\!\!\!\!\!\!\!\!&&+C\int_0^{T\wedge\tau_R^\varepsilon}|Y_s^\varepsilon-\hat{Y}_s^\varepsilon|^2ds
+2\sup_{t\in[0, T\wedge\tau_R^\varepsilon]}\left|\int_0^t\langle Z_s^\varepsilon,[\sigma_1(X_s^\varepsilon)-\sigma_1(\hat{X}_s^\varepsilon)]dW_s^{Q_1}\rangle\right|.
\end{eqnarray*}
Using Gronwall's inequality and the definition of $\tau^{\vare}_{R}$, we can deduce
\begin{eqnarray}
\sup_{t\in[0, T\wedge\tau_R^\varepsilon]}|Z_s^\varepsilon|^2
\leq\!\!\!\!\!\!\!\!&&C_{R,T}\Big[\int_0^{T\wedge\tau_R^\varepsilon}|X_s^\varepsilon-X_{s(\delta)}^\varepsilon|^2ds+\int_0^{T\wedge\tau_R^\varepsilon}|Y_s^\varepsilon-\hat{Y}_s^\varepsilon|^2ds\nonumber\\
\!\!\!\!\!\!\!\!&&+\sup_{t\in[0, T\wedge\tau_R^\varepsilon]}\left|\int_0^t\langle Z_s^\varepsilon,[\sigma_1(X_s^\varepsilon)-\sigma_1(\hat{X}_s^\varepsilon)]dW_s^{Q_1}\rangle\right|\Big].\nonumber
\end{eqnarray}
Then  by Burkholder-Davis-Gundy's inequality, we obtain
\begin{eqnarray*}
\mathbb{E}\left(\sup_{t\in[0, T\wedge\tau_R^\varepsilon]}|Z_t^\varepsilon|^2\right)
\leq\!\!\!\!\!\!\!\!&& C_{R,T}\left(1+|x|^3+|y|^3\right)\delta^{1/2}+\frac{1}{2}\mathbb{E}
\left(\sup_{t\in[0, T\wedge\tau_R^\varepsilon]}|Z_t^\varepsilon|^2\right)
\\\!\!\!\!\!\!\!\!&&+C_{R,T}\mathbb{E}\left(\int_0^{T\wedge\tau_R^\varepsilon}|Z_t^\varepsilon|^2dt\right),  \nonumber
\end{eqnarray*}
which implies
\begin{eqnarray*}
\mathbb{E}\left(\sup_{t\in[0, T\wedge\tau_R^\varepsilon]}|Z_t^\varepsilon|^2\right)\leq C_{R,T}\left(1+|x|^3+|y|^3\right)\delta^{1/2}+C_{R,T}\int_0^T\mathbb{E}\left(\sup_{s\in[0, t\wedge\tau_R^\varepsilon]}|Z_s^\varepsilon|^2\right)dt.
\end{eqnarray*}
By Gronwall's inequality again, we get
$$\mathbb{E}\left(\sup_{t\in[0, T\wedge\tau_R^\varepsilon]}|Z_t^\varepsilon|^2\right)\leq C_{R,T}\left(1+|x|^3+|y|^3\right)\delta^{1/2}.$$
The proof is complete.
\end{proof}

\subsection{Frozen and averaged equation}
We first recall the frozen equation associate to fast motion for fixed slow component $x\in {H}$, i.e.,
\begin{eqnarray}
\left\{ \begin{aligned}
&dY_{t}=[AY_{t}+g(x,Y_{t})]dt+\sigma_2(x,Y_t)d\bar{W}_{t}^{Q_{2}},\\
&Y_{0}=y,
\end{aligned} \right.\label{FEQ1}
\end{eqnarray}
where $\bar{W}_{t}^{Q_{2}}$ is $Q_2$-Wiener process independent of $W^{Q_1}_t$ and $W^{Q_2}_t$. Notice that $g(x,\cdot)$ and $\sigma_2(x,\cdot)$ is Lipshcitz continuous, it is easy to prove for any fixed $x\in {H}$ and any initial data $y\in H$, equation $\eref{FEQ1}$ has a unique mild solution $Y_{t}^{x,y}$. Let $P^{x}_t$ be the transition semigroup of $Y_{t}^{x,y}$,
that is, for any bounded measurable function $\varphi$ on $H$,
\begin{eqnarray*}
P^x_t \varphi(y)= \mathbb{E} \left[\varphi\left(Y_{t}^{x,y}\right)\right], \quad y \in H,\ \ t>0.
\end{eqnarray*}
Similar as the argument in \cite[Section 2.1]{C1}, under the condition \ref{A4}, it is easy to prove that $\EE|Y_{t}^{x,y}|^2\leq C(1+|x|^2+e^{-\delta_1 t}|y|^2)$ for some $\delta_1>0$, and $P^x_t$ has unique invariant measure $\mu^x$. We here give the following asymptotic behavior of $P^x_t$ proved in \cite[ (2.13)]{C1}.

\begin{theorem}\label{Rem 4.1}
For any given value $x,y\in H$, there exists a unique invariant measure $\mu^x$ for $\eref{FEQ1}$. Moreover, there exist $C>0$ and $\eta>0$ such that for any Lipschitz function $\varphi:H\rightarrow \mathbb{R}$,
\begin{equation}
\Big|P^x_t\varphi(y)-\int_{H}\varphi(z)\mu^x(dz)\Big|\leq C(1+|x|+|y|)e^{-\frac{\eta t}{2}}|\varphi|_{Lip},
\end{equation}
where $|\varphi|_{Lip}=\sup_{x,y\in H}\frac{|\varphi(x)-\varphi(y)|}{|x-y|}$.
\end{theorem}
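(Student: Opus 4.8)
\medskip
\noindent\emph{Proof proposal.}\ The plan is to prove the exponential mixing by a synchronous coupling argument in which condition \ref{A4} supplies the contraction rate. Fix $x\in H$ and let $Y_t^{x,y_1}$, $Y_t^{x,y_2}$ be the mild solutions of $\eref{FEQ1}$ driven by the \emph{same} Wiener process $\bar W^{Q_2}$ but started from $y_1,y_2\in H$. Setting $\Lambda_t:=Y_t^{x,y_1}-Y_t^{x,y_2}$, It\^{o}'s formula applied to $|\Lambda_t|^2$, followed by taking expectations (the stochastic integral has zero mean), gives
\[
\frac{d}{dt}\EE|\Lambda_t|^2 = -2\,\EE\|\Lambda_t\|_1^2 + 2\,\EE\langle g(x,Y_t^{x,y_1})-g(x,Y_t^{x,y_2}),\Lambda_t\rangle + \EE\big|\sigma_2(x,Y_t^{x,y_1})-\sigma_2(x,Y_t^{x,y_2})\big|_{\mathcal{L}_{Q_2}}^2 .
\]
Using $\|\Lambda_t\|_1^2\geq\lambda_1|\Lambda_t|^2$ and the Lipschitz bounds of \ref{A1} in the $y$-variable (here $|g(x,y_1)-g(x,y_2)|\leq L_g|y_1-y_2|$ and $|\sigma_2(x,y_1)-\sigma_2(x,y_2)|_{\mathcal{L}_{Q_2}}\leq L_{\sigma_2}|y_1-y_2|$, the $x$-contributions vanishing since both copies share the same $x$), one obtains
\[
\frac{d}{dt}\EE|\Lambda_t|^2 \leq -\big(2\lambda_1-2L_g-L_{\sigma_2}^2\big)\EE|\Lambda_t|^2 =: -\eta\,\EE|\Lambda_t|^2 ,
\]
with $\eta>0$ by \ref{A4}. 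Gronwall's inequality yields $\EE|Y_t^{x,y_1}-Y_t^{x,y_2}|^2\leq e^{-\eta t}|y_1-y_2|^2$, hence by Jensen $\EE|Y_t^{x,y_1}-Y_t^{x,y_2}|\leq e^{-\eta t/2}|y_1-y_2|$, and so for every Lipschitz $\varphi$,
\[
|P_t^x\varphi(y_1)-P_t^x\varphi(y_2)|\leq |\varphi|_{Lip}\,\EE|Y_t^{x,y_1}-Y_t^{x,y_2}|\leq |\varphi|_{Lip}\,e^{-\eta t/2}|y_1-y_2| .
\]

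Next I would deduce existence and uniqueness of the invariant measure from this contraction. Writing $\pi^x_t$ for the law of $Y_t^{x,0}$, the Markov property together with the estimate above and the a priori bound $\EE|Y_t^{x,y}|^2\leq C(1+|x|^2+e^{-\delta_1 t}|y|^2)$ recalled before the statement shows that $\{\pi^x_t\}_{t\geq0}$ is Cauchy in the Wasserstein-$1$ distance $W_1$ on $H$ (since $W_1(\pi^x_t,\pi^x_{t+s})\leq e^{-\eta t/2}\,\EE|Y^{x,0}_s|\leq C(1+|x|)e^{-\eta t/2}$); it therefore converges to a probability measure $\mu^x$, which is $P_t^x$-invariant by passing to the limit in $\pi^x_{s+t}=\pi^x_s P_t^x$ using the $W_1$-nonexpansiveness of $P_t^x$. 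Uniqueness is immediate, since any two invariant measures $\mu_1,\mu_2$ satisfy $W_1(\mu_1,\mu_2)=W_1(\mu_1P_t^x,\mu_2P_t^x)\leq e^{-\eta t/2}W_1(\mu_1,\mu_2)$ for all $t>0$. Moreover, letting $t\to\infty$ in the same a priori bound and using Fatou's lemma gives $\int_H|z|^2\,\mu^x(dz)\leq C(1+|x|^2)$, hence $\int_H|z|\,\mu^x(dz)\leq C(1+|x|)$.

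Finally, to obtain the stated inequality, I would use invariance of $\mu^x$ to write, for Lipschitz $\varphi$,
\[
\Big|P_t^x\varphi(y)-\int_H\varphi(z)\,\mu^x(dz)\Big| = \Big|\int_H\big(P_t^x\varphi(y)-P_t^x\varphi(z)\big)\,\mu^x(dz)\Big| \leq |\varphi|_{Lip}\,e^{-\eta t/2}\int_H|y-z|\,\mu^x(dz) ,
\]
and then bound $\int_H|y-z|\,\mu^x(dz)\leq |y|+\int_H|z|\,\mu^x(dz)\leq C(1+|x|+|y|)$ to conclude. The argument is essentially standard once \ref{A4} is in force; the two points requiring genuine care are (i) checking that the energy identity for $|\Lambda_t|^2$ really produces the strictly negative exponent $-\eta$, i.e.\ that \ref{A4} is precisely the balance needed between the spectral gap $\lambda_1$ and the Lipschitz constants $L_g,L_{\sigma_2}$, and (ii) the uniform-in-$x$ second moment of $\mu^x$, which rests on the a priori estimate $\EE|Y_t^{x,y}|^2\leq C(1+|x|^2+e^{-\delta_1 t}|y|^2)$ (itself another It\^{o}/Gronwall computation relying again on \ref{A4}). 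I expect (ii), or rather that underlying a priori bound, to be the main technical input; the remainder is a routine coupling/Wasserstein contraction.
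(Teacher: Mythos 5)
Your proposal is correct. Note that the paper does not actually prove this theorem: it is quoted from Cerrai's work (the displayed estimate is \cite[(2.13)]{C1}, and the existence/uniqueness of $\mu^x$ is deferred to the argument of \cite[Section 2.1]{C1}). Your synchronous-coupling computation, with the dissipativity exponent $\eta=2\lambda_1-2L_g-L_{\sigma_2}^2>0$ supplied by condition \ref{A4}, followed by the Wasserstein--Cauchy construction of $\mu^x$ and the invariance trick in the final display, is precisely the standard argument underlying that citation, so it supplies a proof the paper omits rather than diverging from one it gives.
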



 \vskip 0.3cm

Next, we recall the corresponding averaged equation, i.e.,
\begin{equation}\left\{\begin{array}{l}
\displaystyle d\bar{X}_{t}=A\bar{X}_{t}dt-B(\bar{X}_{t})dt+\bar{f}(\bar{X}_{t})dt+\sigma_1(\bar{X}_{t})dW^{Q_{1}}_t,\\
\bar{X}_{0}=x,\end{array}\right. \label{3.1}
\end{equation}
with the average
\begin{align*}
\bar{f}(x)=\int_{H}f(x,y)\mu^{x}(dy),\quad x\in {H},
\end{align*}
where $\mu^{x}$ is the unique invariant measure for equation $\eref{FEQ1}$.
\vskip 0.3cm

Due to the Lipschitz of $f$, by a standard method, it is easy to check $\bar{f}$ is Lipschitz and equation \eref{3.1} has a unique solution. Then similar with the argument in Lemma \ref{PMY}, we also have the following estimate.
\begin{lemma}\label{L3.8}
For any $T>0$, $p\geq 1$, there exists a positive constant $C_{p,T}$ such that for any $x\in H$
\begin{align*}
\mathbb{E}\left(\sup_{t\in[0,T]}|\bar{X}_{t}|^{2p}\right)+\EE\left(\int_0^T|\bar{X}_{t}|^{2p-2}\|\bar{X}_{t}\|_1^2dt\right)\leq C_{p,T}(1+|x|^{2p}).
\end{align*}
\end{lemma}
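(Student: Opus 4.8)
The plan is to establish the a priori bound for the averaged equation $\eref{3.1}$ by the same It\^o-energy argument used for Lemma \ref{PMY}, now \emph{without} a fast component but with the nonlinearity $\bar f$ in place of $f$. First I would apply It\^o's formula to $|\bar X_t|^{2p}$ for the equation $\eref{3.1}$, obtaining the terms
\begin{eqnarray*}
|\bar X_t|^{2p}&=&|x|^{2p}+2p\int_0^t|\bar X_s|^{2p-2}\langle A\bar X_s,\bar X_s\rangle ds-2p\int_0^t|\bar X_s|^{2p-2}\langle B(\bar X_s),\bar X_s\rangle ds\\
&&+\,2p\int_0^t|\bar X_s|^{2p-2}\langle\bar f(\bar X_s),\bar X_s\rangle ds+2p\int_0^t|\bar X_s|^{2p-2}\langle\sigma_1(\bar X_s),dW^{Q_1}_s\rangle\\
&&+\,p\int_0^t|\bar X_s|^{2p-2}|\sigma_1(\bar X_s)|^2_{\mathcal L_{Q_1}}ds+2p(p-1)\int_0^t|\bar X_s|^{2p-4}|\sigma_1(\bar X_s)^*\bar X_s|^2_{U_2}ds.
\end{eqnarray*}
The key structural facts are that $\langle A\bar X_s,\bar X_s\rangle=-\|\bar X_s\|_1^2$, that the trilinear term vanishes, $\langle B(\bar X_s),\bar X_s\rangle=b(\bar X_s,\bar X_s,\bar X_s)=0$ (Lemma \ref{Property B1} in the appendix), and that $\bar f$ is Lipschitz — hence of linear growth, $|\bar f(x)|\le C(1+|x|)$ — which follows from condition \ref{A1} and the bound $\EE|Y^{x,y}_t|^2\le C(1+|x|^2+e^{-\delta_1 t}|y|^2)$ integrated against $\mu^x$.

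Next I would keep the $-2p\int_0^t|\bar X_s|^{2p-2}\|\bar X_s\|_1^2ds$ term on the left-hand side, and bound the $\bar f$-term by $C\int_0^t|\bar X_s|^{2p-2}(1+|\bar X_s|)|\bar X_s|\,ds\le C\int_0^t(1+|\bar X_s|^{2p})ds$ via Young's inequality, and the two $\sigma_1$ Cram\'er terms by $C\int_0^t|\bar X_s|^{2p-2}(1+|\bar X_s|^2)ds\le C\int_0^t(1+|\bar X_s|^{2p})ds$ using the Lipschitz (hence linear-growth) bound on $\sigma_1$ from condition \ref{A1}. The stochastic integral is handled by Burkholder--Davis--Gundy: $\EE\sup_{t\le T}|2p\int_0^t|\bar X_s|^{2p-2}\langle\sigma_1(\bar X_s),dW^{Q_1}_s\rangle|\le C\EE(\int_0^T|\bar X_s|^{4p-2}|\sigma_1(\bar X_s)|^2_{\mathcal L_{Q_1}}ds)^{1/2}\le\frac12\EE\sup_{t\le T}|\bar X_t|^{2p}+C\int_0^T(1+\EE|\bar X_s|^{2p})ds$, absorbing the supremum term into the left side. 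Taking the supremum over $[0,T]$ and then expectations yields
$$\EE\Big(\sup_{t\in[0,T]}|\bar X_t|^{2p}\Big)+p\,\EE\Big(\int_0^T|\bar X_t|^{2p-2}\|\bar X_t\|_1^2dt\Big)\le C_{p,T}(1+|x|^{2p})+C_{p,T}\int_0^T\EE|\bar X_t|^{2p}dt,$$
and Gronwall's inequality closes the estimate.

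I do not anticipate a genuine obstacle here: this is a routine moment estimate and is explicitly flagged in the text as following "similar with the argument in Lemma \ref{PMY}". The one point requiring a small amount of care is that $\bar f$ is only \emph{defined} through the invariant measure, so one must first invoke the Lipschitz continuity of $\bar f$ (already asserted in the paragraph preceding Lemma \ref{L3.8}, and following from the Lipschitz property of $f$ in \ref{A1} together with Theorem \ref{Rem 4.1}) in order to get the linear-growth bound $|\bar f(x)|\le|\bar f(0)|+C|x|\le C(1+|x|)$ that drives the energy estimate. Beyond that, the argument is identical to the $Y$-free part of the proof of Lemma \ref{PMY}, so I would simply indicate the differences and refer back to that computation rather than reproduce it in full.
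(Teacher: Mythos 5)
Your proposal is correct and follows exactly the route the paper intends: the paper gives no separate proof of Lemma \ref{L3.8} but simply notes it follows ``similar with the argument in Lemma \ref{PMY}'', which is precisely the It\^o-formula/BDG/Gronwall computation you carry out, with the cancellation $\langle B(\bar X_s),\bar X_s\rangle=0$ and the Lipschitz (hence linear-growth) property of $\bar f$ supplying the needed bounds. The only cosmetic slip is writing $U_2$ instead of $U_1$ in the last It\^o correction term.
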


\vskip 0.3cm
 In the next lemma, we shall deal with the difference process $\hat{X}_{t}^{\vare}-\bar{X}_t$. To this end, we shall construct another stopping time, i.e., for fixed $\vare\in(0,1)$, $R>0$,
 $$\tilde{\tau}_R^\varepsilon:=\inf\left\{t\geq 0:\int_0^t\|X^{\vare}_s\|_1^2ds+\int_0^t\|\hat{X}_s^\varepsilon\|_1^2ds+\int_0^t\|\bar{X}_s\|_1^2ds+\int_0^t\|J_s^\varepsilon\|_\frac{1}{2}^2ds\geq R\right\},$$
 where $J_t^\varepsilon$ is the solution of the following equation:
\begin{equation}\left\{\begin{array}{l}
\displaystyle dJ_t^\varepsilon=A J_t^\varepsilon dt+\left[f(X_{t(\delta)}^\varepsilon,\hat{Y}_t^\varepsilon)-\bar{f}(X_{t(\delta)}^\varepsilon)\right]dt,\\
J_0^\varepsilon=0,\end{array}\right. \label{E3.20}
\end{equation}
which satisfies
\begin{equation*}
J_t^\varepsilon=\int_0^te^{(t-s)A}\left[f(X_{s(\delta)}^\varepsilon,\hat{Y}_s^\varepsilon)-\bar{f}(X_{s(\delta)}^\varepsilon)\right]ds.
\end{equation*}
We remark here that the reason why we introduce $J_t^\varepsilon$ in $\tilde{\tau}_R^\varepsilon$ is a technical treatment in the proof of following Lemma.
\begin{lemma} \label{ESX}
For any $x,y\in H$, $T>0$ and $\vare\in(0,1)$, then there exists a constant $C_{R,T}>0$ such that
\begin{align*}
\mathbb{E}\left(\sup_{t\in[0, T\wedge{\tilde{\tau}_R^\varepsilon}]}|\hat{X}_{t}^{\vare}-\bar{X}_{t}|^2\right)\leq C_{R,T}(1+|x|^3+|y|^{3})\left(\frac{\varepsilon^\frac{1}{2}}{\delta}+\delta^{1/2}\right).
\end{align*}
\end{lemma}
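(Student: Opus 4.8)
The plan is to control $Z_t:=\hat X_t^\varepsilon-\bar X_t$, which satisfies $Z_0=0$ and
$$dZ_t=AZ_t\,dt-\big[B(\hat X_t^\varepsilon)-B(\bar X_t)\big]dt+\big[f(X_{t(\delta)}^\varepsilon,\hat Y_t^\varepsilon)-\bar f(\bar X_t)\big]dt+\big[\sigma_1(\hat X_t^\varepsilon)-\sigma_1(\bar X_t)\big]dW_t^{Q_1},$$
by applying It\^o's formula to $|Z_t|^2$ and estimating everything up to $T\wedge\tilde\tau_R^\varepsilon$. The quadratic term is handled exactly as in Lemma~\ref{DEX}: writing $B(\hat X^\varepsilon)-B(\bar X)=B(Z,\hat X^\varepsilon)+B(\bar X,Z)$ and using $b(u,v,v)=0$ together with Corollary~\ref{Property B2} gives $|\langle B(\hat X_t^\varepsilon)-B(\bar X_t),Z_t\rangle|\le\tfrac12\|Z_t\|_1^2+C\|\hat X_t^\varepsilon\|_1^2|Z_t|^2$, the first part being absorbed by the dissipation $-2\|Z_t\|_1^2$ and the weight $\|\hat X_t^\varepsilon\|_1^2$ being integrable up to $\tilde\tau_R^\varepsilon$ (since $\int_0^{T\wedge\tilde\tau_R^\varepsilon}\|\hat X_s^\varepsilon\|_1^2\,ds\le R$), so Gronwall applies. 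The diffusion term contributes $C|Z_t|^2$ plus a martingale treated by Burkholder--Davis--Gundy. In the drift discrepancy $\int_0^t\langle f(X_{s(\delta)}^\varepsilon,\hat Y_s^\varepsilon)-\bar f(\bar X_s),Z_s\rangle\,ds$ I split $f(X_{s(\delta)}^\varepsilon,\hat Y_s^\varepsilon)-\bar f(\bar X_s)=\big[f(X_{s(\delta)}^\varepsilon,\hat Y_s^\varepsilon)-\bar f(X_{s(\delta)}^\varepsilon)\big]+\big[\bar f(X_{s(\delta)}^\varepsilon)-\bar f(\bar X_s)\big]$; since $\bar f$ is Lipschitz, the second bracket is $\le C(|X_{s(\delta)}^\varepsilon-X_s^\varepsilon|+|X_s^\varepsilon-\hat X_s^\varepsilon|+|Z_s|)$, which paired with $|Z_s|$ and integrated yields, via Cauchy--Schwarz, terms controlled by Lemma~\ref{COX}, by Lemma~\ref{DEX} (applicable here since $\tilde\tau_R^\varepsilon\le\tau_R^\varepsilon$), and by $\int_0^t|Z_s|^2\,ds$ which Gronwall absorbs.

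It remains to control the genuine averaging term $\Lambda_t:=\int_0^t\langle f(X_{s(\delta)}^\varepsilon,\hat Y_s^\varepsilon)-\bar f(X_{s(\delta)}^\varepsilon),Z_s\rangle\,ds$, and this is where the auxiliary process $J_t^\varepsilon$ of \eqref{E3.20} enters. Since $f(X_{s(\delta)}^\varepsilon,\hat Y_s^\varepsilon)-\bar f(X_{s(\delta)}^\varepsilon)=\tfrac{d}{ds}J_s^\varepsilon-AJ_s^\varepsilon$, applying the It\^o product rule to $\langle J_s^\varepsilon,Z_s\rangle$ and using the self-adjointness of $A$ to cancel the two $A$-contributions gives
\begin{align*}
\Lambda_t=&\,\langle J_t^\varepsilon,Z_t\rangle+\int_0^t\langle J_s^\varepsilon,B(\hat X_s^\varepsilon)-B(\bar X_s)\rangle\,ds\\
&-\int_0^t\langle J_s^\varepsilon,f(X_{s(\delta)}^\varepsilon,\hat Y_s^\varepsilon)-\bar f(\bar X_s)\rangle\,ds-\int_0^t\langle J_s^\varepsilon,[\sigma_1(\hat X_s^\varepsilon)-\sigma_1(\bar X_s)]\,dW_s^{Q_1}\rangle.
\end{align*}
All four terms now feature $J^\varepsilon$, which I expect to be small.

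The decisive ingredient is the bound $\mathbb{E}\big(\sup_{t\le T}|J_t^\varepsilon|^2\big)\le C_T(1+|x|^2+|y|^2)(\varepsilon/\delta+\delta)$, obtained by writing $J_t^\varepsilon=\sum_k\int_{k\delta}^{(k+1)\delta\wedge t}e^{(t-s)A}\big[f(X_{k\delta}^\varepsilon,\hat Y_s^\varepsilon)-\bar f(X_{k\delta}^\varepsilon)\big]ds$, noting that on $[k\delta,(k+1)\delta)$ the frozen value $X_{k\delta}^\varepsilon$ is $\mathscr F_{k\delta}$-measurable while $\hat Y^\varepsilon$ is, after the time change $s\mapsto(s-k\delta)/\varepsilon$, a copy of the frozen solution $Y^{x,y}$ at $x=X_{k\delta}^\varepsilon$, $y=\hat Y_{k\delta}^\varepsilon$; conditioning on $\mathscr F_{k\delta}$ (and on the earlier $\mathscr F_s$ in the two-time integrand of $|\int\cdot\,ds|^2$) and invoking the exponential mixing of Theorem~\ref{Rem 4.1} for $f(x,\cdot)$ gives a per-block estimate $\le C\varepsilon\delta\,(1+\mathbb{E}|X_{k\delta}^\varepsilon|^2+\mathbb{E}|\hat Y_{k\delta}^\varepsilon|^2)$, and summing the $O(T/\delta)$ blocks with $|e^{\tau A}|\le1$, Cauchy--Schwarz in $k$, and the moment bounds of Lemmas~\ref{PMY} and \ref{MDYa} yields the claimed order; the stopping time moreover provides $\int_0^{T\wedge\tilde\tau_R^\varepsilon}\|J_s^\varepsilon\|_{1/2}^2\,ds\le R$ for free. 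Granting this: $\langle J_t^\varepsilon,Z_t\rangle\le\tfrac14\sup_{t\le T\wedge\tilde\tau_R^\varepsilon}|Z_t|^2+\sup_{t\le T}|J_t^\varepsilon|^2$, the first absorbed and the second of the right order; the $B$-pairing term, via the two-dimensional trilinear estimate $|b(u,v,w)|\le C\|u\|_{1/2}\|v\|_1\|w\|_{1/2}$ (equivalently $H^{1/2}\hookrightarrow L^4$) and $b(u,v,w)=-b(u,w,v)$, is dominated by $C\int_0^t\|J_s^\varepsilon\|_{1/2}\,\|Z_s\|_{1/2}\,(\|\hat X_s^\varepsilon\|_1+\|\bar X_s\|_1)\,ds$, which after the interpolation $\|Z_s\|_{1/2}^2\le\|Z_s\|_1|Z_s|$, Young's inequality, and the stopping-time bounds on $\int\|\hat X^\varepsilon\|_1^2$, $\int\|\bar X\|_1^2$, $\int\|J^\varepsilon\|_{1/2}^2$ and $\int\|Z\|_1^2\le4R$ (the very reason $\|J^\varepsilon\|_{1/2}$ was built into $\tilde\tau_R^\varepsilon$) is absorbed modulo a term of the same order; the $f$-pairing term is bounded by Cauchy--Schwarz by $(\int_0^T|J_s^\varepsilon|^2\,ds)^{1/2}$ times an $L^2$-controlled factor (Lipschitz growth of $f,\bar f$ and Lemmas~\ref{PMY}, \ref{MDYa}, \ref{L3.8}), hence by $C_{R,T}(1+|x|^2+|y|^2)(\varepsilon/\delta+\delta)^{1/2}$; and the stochastic term is handled by Burkholder--Davis--Gundy with the same inputs. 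Collecting all bounds, applying Gronwall's inequality on $[0,T\wedge\tilde\tau_R^\varepsilon]$, recalling the $\delta^{1/2}$ contributed by Lemmas~\ref{COX} and \ref{DEX}, and using $\varepsilon/\delta+\delta+(\varepsilon/\delta)^{1/2}+\delta^{1/2}\le C(\varepsilon^{1/2}/\delta+\delta^{1/2})$ for $\varepsilon,\delta\in(0,1)$, yields the assertion.

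The main obstacle is the interaction of the quadratic nonlinearity with the averaging fluctuation: after the integration by parts one must control $\int_0^t\langle J_s^\varepsilon,B(\hat X_s^\varepsilon)-B(\bar X_s)\rangle\,ds$, and there is no way to close this in $H$ alone --- one needs $J^\varepsilon$ measured in $H^{1/2}$, which forces the technical inclusion of $\int_0^t\|J_s^\varepsilon\|_{1/2}^2\,ds$ in the definition of $\tilde\tau_R^\varepsilon$ and the use of the two-dimensional trilinear/Sobolev inequality together with the dissipation $-2\|Z_s\|_1^2$ as an interpolation buffer. A second delicate point is that the ergodic estimate for $J^\varepsilon$ must be run conditionally on $\mathscr F_{k\delta}$ because the ``frozen'' slow value $X_{k\delta}^\varepsilon$ is itself random, so one has to carry along the linear-in-$(|x|,|y|)$ growth furnished by Theorem~\ref{Rem 4.1} and combine it with the uniform moment bounds of Lemmas~\ref{PMY} and \ref{MDYa}; once these two points are settled, the remaining estimates are routine.
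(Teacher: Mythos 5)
Your overall architecture (isolate the averaging fluctuation via the auxiliary process $J^\varepsilon$, estimate $\sup_t$ of a fractional norm of $J^\varepsilon$ by the block-wise time change to the frozen equation and the mixing bound of Theorem \ref{Rem 4.1}, and use the $\|J^\varepsilon\|_{1/2}$ term in $\tilde\tau_R^\varepsilon$ to tame the nonlinearity) matches the paper's. But the way you bring $J^\varepsilon$ into the energy estimate contains a genuine error. You apply the product rule to $\langle J_s^\varepsilon,Z_s\rangle$ and claim that ``self-adjointness of $A$ cancels the two $A$-contributions.'' It does not: the contribution from $dJ^\varepsilon$ is $\langle AJ_s^\varepsilon,Z_s\rangle\,ds$ and the one from $dZ_s$ is $\langle J_s^\varepsilon,AZ_s\rangle\,ds$; self-adjointness makes these \emph{equal}, so they add rather than cancel, and your identity for $\Lambda_t$ is missing the term
\begin{equation*}
-2\int_0^t\langle AJ_s^\varepsilon,Z_s\rangle\,ds \;=\; 2\int_0^t\bigl\langle(-A)^{1/2}J_s^\varepsilon,(-A)^{1/2}Z_s\bigr\rangle\,ds .
\end{equation*}
This term is not small and not sign-favorable, and none of the quantities you control suffices for it: pairing as $\|J^\varepsilon\|_{1/2}\|Z\|_{3/2}$ requires an $H^{3/2}$ bound on $Z$ that is unavailable, while pairing as $\|J^\varepsilon\|_{1}\|Z\|_{1}$ requires an $H^1$ bound on $J^\varepsilon$ that neither the stopping time (which only carries $\int\|J^\varepsilon\|_{1/2}^2\le R$) nor your mixing estimate provides. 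So the central step of your proof does not close as written.

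The paper avoids this trap by a different decomposition: it sets $V_t^\varepsilon:=\hat X_t^\varepsilon-\bar X_t-J_t^\varepsilon$ and applies It\^o's formula to $|V_t^\varepsilon|^2$. In the equation for $V^\varepsilon$ the fluctuation drift $f(X_{t(\delta)}^\varepsilon,\hat Y_t^\varepsilon)-\bar f(X_{t(\delta)}^\varepsilon)$ is subtracted out together with $AJ_t^\varepsilon$, so the linear part acts on $V^\varepsilon$ itself and produces the clean dissipation $-2\|V^\varepsilon\|_1^2$ with no leftover $\langle AJ^\varepsilon,\cdot\rangle$ pairing; $J^\varepsilon$ then enters only through the nonlinear term, via $B(\hat X^\varepsilon)-B(\bar X)=\bigl[B(\hat X^\varepsilon)-B(\bar X+J^\varepsilon)\bigr]+\bigl[B(\bar X+J^\varepsilon)-B(\bar X)\bigr]$ and Corollary \ref{Property B2}, which is exactly where $\|J^\varepsilon\|_{1/2}$ and the stopping time are needed. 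I would also flag, secondarily, that you only claim $\mathbb{E}\sup_t|J_t^\varepsilon|^2\le C(\varepsilon/\delta+\delta)$ in the $H$-norm, whereas both your $B$-pairing term and the stopping-time argument require the stronger statement $\mathbb{E}\sup_t\|J_t^\varepsilon\|_{1/2}^2\le C\,\varepsilon^{1/2}/\delta$, which is what the paper actually proves (the extra $(-A)^{1/4}$ produces the integrable singularities $(\delta-s\varepsilon)^{-1/4}$ and accounts for the loss from $\varepsilon/\delta$ to $\varepsilon^{1/2}/\delta$); you assert the $H^{1/2}$ control only ``for free'' from the stopping time, which controls the time integral but not the supremum you need in \eqref{5.1}-type estimates.
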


\begin{proof}
 The proof is divided into two steps.

\vspace{2mm}
\textbf{Step 1. (Splitting $\hat{X}_t^\varepsilon-\bar{X}_t$ into two terms)}:
Let $L_t^\varepsilon:=\hat{X}_t^\varepsilon-\bar{X}_t$ and $V_t^\varepsilon:=L_t^\varepsilon-J_t^\varepsilon$. From equations \eref{4.6b}, \eref{3.1} and \eref{E3.20}, we can write
\begin{eqnarray*}
dV_t^\varepsilon
=\!\!\!\!\!\!\!\!&&AV_t^\varepsilon dt-[B(\hat{X}_t^\varepsilon)-B(\bar{X}_t)]dt \nonumber\\
\!\!\!\!\!\!\!\!&&+[\bar{f}(X_{t(\delta)}^\varepsilon)-\bar{f}(\bar{X}_t)]dt   +[\sigma_1(\hat{X}_t^\varepsilon)-\sigma_1(\bar{X}_t)]dW_t^{Q_1}.\\
=\!\!\!\!\!\!\!\!&&AV_t^\varepsilon dt-[B(\hat{X}_t^\varepsilon)-B(\bar{X}_t)]dt+[\bar{f}(X_{t(\delta)}^\varepsilon)-\bar{f}(X_t^\varepsilon)]dt \nonumber\\\!\!\!\!\!\!\!\!&&+[\bar{f}(X_t^\varepsilon)-\bar{f}(\hat{X}_t^\varepsilon)+\bar{f}(\hat{X}_t^\varepsilon)-\bar{f}(\bar{X}_t)]dt   +[\sigma_1(\hat{X}_t^\varepsilon)-\sigma_1(\bar{X}_t)]dW_t^{Q_1}.   \nonumber
\end{eqnarray*}
Applying It\^{o}'s formula, we have
\begin{eqnarray}
|V_t^\varepsilon|^2
=\!\!\!\!\!\!\!\!&&-2\int_0^t\|V_t^\varepsilon\|^2_1 ds-2\int_0^t\langle B(\hat{X}_s^\varepsilon)-B(\bar{X}_s), V_s^\varepsilon\rangle ds \nonumber\\
\!\!\!\!\!\!\!\!&&+2\int_0^t\langle\bar{f}(X_{s(\delta)}^\varepsilon)-\bar{f}(X_s^\varepsilon)+\bar{f}(X_s^\varepsilon)-\bar{f}(\hat{X}_s^\varepsilon)
+\bar{f}(\hat{X}_s^\varepsilon)-\bar{f}(\bar{X}_s), V_s^\varepsilon\rangle \nonumber\\
\!\!\!\!\!\!\!\!&&+\int_0^t|\sigma_1(\hat{X}_s^\varepsilon)-\sigma_1(\bar{X}_s)|_{\mathcal{L}_{Q_1}}^2ds +2\int_0^t\langle V_s^\varepsilon,[\sigma_1(\hat{X}_s^\varepsilon)-\sigma_1(\bar{X}_s)]dW_s^{Q_1}\rangle \nonumber\\
:=\!\!\!\!\!\!\!\!&&-2\int_0^t\|V_s^\varepsilon\|_1^2ds+\mathcal{I}_1(t)
+\mathcal{I}_2(t)+\mathcal{I}_3(t)+\mathcal{I}_4(t).  \label{3.18}
\end{eqnarray}
For $\mathcal{I}_1(t)$, we can use Corollary \ref{Property B2} and Young's inequality to get that
\begin{eqnarray}
\mathcal{I}_1(t)
=\!\!\!\!\!\!\!\!&&-2\int_0^t\langle B(\hat{X}_s^\varepsilon)-B(\bar{X}_s+J_s^\varepsilon), V_s^\varepsilon\rangle ds-2\int_0^t\langle B(\bar{X}_s+J_s^\varepsilon)-B(\bar{X}_s), V_s^\varepsilon\rangle ds \nonumber\\
\leq\!\!\!\!\!\!\!\!&& C\int_0^t|V_s^\varepsilon|\|V_s^\varepsilon\|_1\|\hat{X}_s^\varepsilon\|_1ds+2\int_0^t\|B(\bar{X}_s+J_s^\varepsilon)
-B(\bar{X}_s)\|_{-1}\|V_s^\varepsilon\|_1ds \nonumber\\
\leq\!\!\!\!\!\!\!\!&&\int_0^t\|V_s^\varepsilon\|_1^2ds +C\int_0^t|V_s^\varepsilon|^2\|\hat{X}_s^\varepsilon\|_1^2ds
\nonumber\\
\!\!\!\!\!\!\!\!&&+C\int_0^t\|J_s^\varepsilon\|_{1/2}^2\left(\|\bar{X}_s
+J_s^\varepsilon\|_{1/2}^2+\|\bar{X}_s\|^2_{1/2}\right)ds. \label{3.19}
\end{eqnarray}
For $\mathcal{I}_2(t)$,  we have by using the Lipschitz continuous property of $\bar{f}$ and Young's inequality that
\begin{eqnarray}
\mathcal{I}_2(t)\leq C\int_0^t(|X_{s(\delta)}^\varepsilon-X_s^\varepsilon|^2+|X_s^\varepsilon-\hat{X}_s^\varepsilon|^2+|L_s^\varepsilon|^2)ds+C\int_0^t|V_s^\varepsilon|^2ds. \label{3.20}
\end{eqnarray}
According to condition \ref{A1}, it is easy to see that
\begin{eqnarray}
\mathcal{I}_3(t)\leq C\int_0^t|L_s^\varepsilon|^2ds. \label{3.20a}
\end{eqnarray}
Combining estimates \eref{3.18}-\eref{3.20a} together, we obtain
\begin{eqnarray*}
|V_t^\varepsilon|^2+\int_0^t\|V_s^\varepsilon\|_1^2ds
&&\!\!\!\!\!\!\!\!\leq C\int_0^t(1+\|\hat{X}_s^\varepsilon\|_1^2)|V_s^\varepsilon|^2ds
+C\int_0^t\|J_s^\varepsilon\|_{1/2}^2(\|\bar{X}_s
+J_s^\varepsilon\|_{1/2}^2+\|\bar{X}_s\|^2_{1/2})ds\\
\!\!\!\!\!\!\!\!&&+C\int_0^t(|X_{s(\delta)}^\varepsilon-X_s^\varepsilon|^2+
|X_s^\varepsilon-\hat{X}_s^\varepsilon|^2+|L_s^\varepsilon|^2)ds
+\mathcal{I}_4(t).
\end{eqnarray*}
As a result, it follows from Gronwall's inequality that
\begin{eqnarray*}
|V_t^\varepsilon|^2+\int_0^t\|V_t^\varepsilon\|_1^2ds
&&\!\!\!\!\!\!\!\!\leq C\Big[\int_0^t\left(|X_{s(\delta)}^\varepsilon-X_s^\varepsilon|^2+|X_s^\varepsilon-\hat{X}_s^\varepsilon|^2+|L_s^\varepsilon|^2\right)ds\\
\!\!\!\!\!\!\!\!&&+\int_0^t\|J_s^\varepsilon\|_{1/2}^2\left(\|\bar{X}_s
+J_s^\varepsilon\|_{1/2}^2+\|\bar{X}_s\|_{1/2}^2\right)ds+\mathcal{I}_4(t)\Big]e^{\int_0^t(\|\hat{X}_s^\varepsilon\|_1^2+1)ds}.
\end{eqnarray*}
By the definition of stopping time $\tilde{\tau}_R^\varepsilon$ and Burkholder-Davis-Gundy's inequality, we can deduce that
\begin{eqnarray*}
\mathbb{E}\left(\sup_{t\in[0, T\wedge{\tilde{\tau}_R^\varepsilon}]}|V_t^\varepsilon|^2\right)&&\!\!\!\!\!\!\!\!\leq C_{R,T}\mathbb{E}\left[\sup_{t\in[0, T\wedge{\tilde{\tau}_R^\varepsilon}]}\|J_t^\varepsilon\|_{1/2}^2\int_0^{T\wedge\tilde{\tau}_R^\varepsilon}(\|J_t^\varepsilon\|_{1/2}^2+\|\bar{X}_t\|_1^2)dt\right.\\
\!\!\!\!\!\!\!\! &&\left.+\int_0^{T\wedge\tilde{\tau}_R^\varepsilon}\left(|X_{t(\delta)}^\varepsilon-X_t^\varepsilon|^2+|X_t^\varepsilon-\hat{X}_t^\varepsilon|+|L_t^\varepsilon|^2\right)dt
+\sup_{t\in[0, T\wedge{\tilde{\tau}_R^\varepsilon}]}\mathcal{I}_4(t)\right]\\
&&\!\!\!\!\!\!\!\!\leq  C_{R,T}\mathbb{E}\left(\sup_{t\in[0, T\wedge{\tilde{\tau}_R^\varepsilon}]}\|J_t^\varepsilon\|_{1/2}^2\right)
+\frac{1}{2}\mathbb{E}\left(\sup_{t\in[0, T\wedge{\tilde{\tau}_R^\varepsilon}]}|V_t^\varepsilon|^2\right)\\
\!\!\!\!\!\!\!\!&&+C_{R,T}\mathbb{E}\left[\int_0^{T\wedge\tilde{\tau}_R^\varepsilon}\left(|X_{t(\delta)}^\varepsilon-X_t^\varepsilon|^2+|X_t^\varepsilon-\hat{X}_t^\varepsilon|^2+|L_t^\varepsilon|^2\right)dt\right].
\end{eqnarray*}
Then it follows
\begin{eqnarray*}
\mathbb{E}\left(\sup_{t\in[0, T\wedge{\tilde{\tau}_R^\varepsilon}]}|V_t^\varepsilon|^2\right)
&&\!\!\!\!\!\!\!\!\leq C_{R,T}\mathbb{E}\left(\sup_{t\in[0, T\wedge{\tilde{\tau}_R^\varepsilon}]}\|J_t^\varepsilon\|_{1/2}^2\right)+C_{R,T}\mathbb{E}\left(\int_0^{T\wedge\tilde{\tau}_R^\varepsilon}|L_t^\varepsilon|^2dt\right)\\
\!\!\!\!\!\!\!\!&&+C_{R,T}\mathbb{E}\left[\int_0^{T\wedge\tilde{\tau}_R^\varepsilon}\left(|X_{t(\delta)}^\varepsilon-X_t^\varepsilon|^2+|X_t^\varepsilon-\hat{X}_t^\varepsilon|^2\right)dt\right].
\end{eqnarray*}
Notice that $\tilde{\tau}^{\vare}_R\leq\tau^{\vare}_R$, we can get by Lemmas \ref{COX} and \ref{DEX} that
\begin{eqnarray*}
\mathbb{E}\left(\sup_{t\in[0, T\wedge{\tilde{\tau}_R^\varepsilon}]}|L_t^\varepsilon|^2\right)
&&\!\!\!\!\!\!\!\!\leq2\mathbb{E}\left(\sup_{t\in[0, T\wedge{\tilde{\tau}_R^\varepsilon}]}|V_t^\varepsilon|^2\right)+2\mathbb{E}\left(\sup_{t\in[0,T]}|J_t^\varepsilon|^2\right)\nonumber \\
&&\!\!\!\!\!\!\!\!\leq C_{R,T}\mathbb{E}\left(\sup_{t\in[0,T]}\|J_t^\varepsilon\|^2_{1/2}\right)+C_{R,T}\mathbb{E}\left(\int_0^{T\wedge\tilde{\tau_R}^\varepsilon}|L_t^\varepsilon|^2dt\right)\nonumber\\
\!\!\!\!\!\!\!\!&&+C_{R,T}\mathbb{E}\left[\int_0^{T\wedge\tilde{\tau}_R^\varepsilon}\left(|X_{t(\delta)}^\varepsilon-X_t^\varepsilon|^2+|X_t^\varepsilon-\hat{X}_t^\varepsilon|^2\right)dt\right]\\
&&\!\!\!\!\!\!\!\!\leq C_{R,T}\mathbb{E}\left(\sup_{t\in[0,T]}\|J_t^\varepsilon\|^2_{1/2}\right)+C_{R,T}\mathbb{E}\left(\int_0^{T\wedge\tilde{\tau}_R^\varepsilon}|L_t^\varepsilon|^2dt\right)\nonumber\\
\!\!\!\!\!\!\!\!&&+C_{R,T}\left(1+|x|^3+|y|^3\right)\delta^{1/2}.
\end{eqnarray*}
Then by Gronwall's inequality, we obtain
\begin{eqnarray}\label{5.1}
\mathbb{E}\left(\sup_{t\in[0, T\wedge{\tilde{\tau}_R^\varepsilon}]}|L_t^\varepsilon|^2\right)\leq C_{R,T}\mathbb{E}\left(\sup_{t\in[0,T]}\|J_t^\varepsilon\|^2_{1/2}\right)+C_{R,T}\left(1+|x|^3+|y|^3\right)\delta^{1/2}.
\end{eqnarray}

Hence, the proof will be finished by the following estimates \eref{3.23}, which will be done in the next step.

\vspace{2mm}
\textbf{Step 2. (The estimate for $J_t^\varepsilon$)}:
By the discussion above, it suffices to show that the following estimate holds, i.e.,
\begin{eqnarray}\label{3.23}
\mathbb{E}\left(\sup_{t\in[0,T]}\|J_t^\varepsilon\|_{1/2}^2\right)\leq C_T(1+|x|^2+|y|^2)\frac{\varepsilon^\frac{1}{2}}{\delta}.
\end{eqnarray}
To this end, we decompose $J_t^\varepsilon$ by
\begin{eqnarray*}
J_t^\varepsilon
=\!\!\!\!\!\!\!\!&&\sum_{k=0}^{[t/\delta]-1}\int_{k\delta}^{(k+1)\delta}e^{(t-s)A}\left[f(X_{k\delta}^\varepsilon,\hat{Y}_s^\varepsilon)-\bar{f}(X_{k\delta}^\varepsilon)\right]ds
\\\!\!\!\!\!\!\!\!&&+\int_{[t/\delta]\delta}^te^{(t-s)A}\left[f\left(X_{[t/\delta]\delta}^\varepsilon,\hat{Y}_s^\varepsilon\right)-\bar{f}\left(X_{[t/\delta]\delta}^\varepsilon\right)\right]ds\nonumber\\
:=\!\!\!\!\!\!\!\!&&J_1^\varepsilon(t)+J_2^\varepsilon(t).\nonumber
\end{eqnarray*}
For $J_2^\varepsilon(t)$, by Lemmas \ref{PMY} and \ref{MDYa}, it is easy to see
\begin{eqnarray}
\mathbb{E}\left(\sup_{t\in[0,T]}\|J_2^\varepsilon(t)\|_{1/2}^2\right)
\leq\!\!\!\!\!\!\!\!&&C\mathbb{E}\left[\sup_{t\in[0,T]}\left|\int_{n_t\delta}^t(t-s)^{-\frac{1}{4}}(1+|X_{[t/\delta]\delta}^\varepsilon|+|\hat{Y}_s^\varepsilon|)ds\right|^2\right]\nonumber\\
\leq\!\!\!\!\!\!\!\!&&C\left(\int_{0}^{\delta}s^{-\frac{1}{2}}ds\right)\cdot\mathbb{E}\left[\int_0^T(1+|X_{[t/\delta]\delta}^\varepsilon|^2+|\hat{Y}_s^\varepsilon|^2)ds\right]\nonumber\\
\leq\!\!\!\!\!\!\!\!&&C_T(1+|x|^2+|y|^2)\delta^\frac{1}{2}.\nonumber
\end{eqnarray}
For $J_1^\varepsilon(t)$, by the construction of $\hat{Y}_{t}^{\vare}$,
we obtain that, for any $k\in \mathbb{N}$ and $s\in[0,\delta)$,
\begin{eqnarray}
\hat{Y}_{s+k\delta}^{\vare}
=\!\!\!\!\!\!\!\!&&\hat Y_{k\delta}^{\vare}+\frac{1}{\vare}\int_{k\delta}^{k\delta+s}A\hat{Y}_{r}^{\vare}dr
+\frac{1}{\vare}\int_{k\delta}^{k\delta+s}g\left(X_{k\delta}^{\vare},\hat{Y}_{r}^{\vare}\right)dr\nonumber\\
 \!\!\!\!\!\!\!\! &&
+\frac{1}{\sqrt{\vare}}\int_{k\delta}^{k\delta+s}\sigma_2\left(X_{k\delta}^{\vare},\hat{Y}_{r}^{\vare}\right)dW^{Q_2}_r   \nonumber\\
=\!\!\!\!\!\!\!\!&&\hat Y_{k\delta}^{\vare}+\!\!\frac{1}{\vare}\int_{0}^{s}A\hat{Y}_{r+k\delta}^{\vare}dr
+\!\!\frac{1}{\vare}\int_{0}^{s}g\left(X_{k\delta}^{\vare}, \hat{Y}_{r+k\delta}^{\vare}\right)dr\nonumber\\
\!\!\!\!\!\!\!\! &&
+\!\!\frac{1}{\sqrt{\vare}}\int_{0}^{s}\sigma_2\left(X_{k\delta}^{\vare},\hat{Y}_{r+k\delta}^{\vare}\right)d\tilde{W}^{Q_2}_r, \label{E3.26}
\end{eqnarray}
where $\tilde{W}^{Q_2}_t:=W^{Q_2}_{t+k\delta}-W^{Q_2}_{k\delta}$ is the shift version of $W^{Q_2}_t$.

We construct a process $Y^{X_{k\delta}^{\vare},\hat Y_{k\delta}^{\vare}}_t$ by means of $Y^{x,y}_t \mid_{(x,y)=\left(X_{k\delta}^{\vare},\hat Y_{k\delta}^{\vare}\right)}$, where $Y^{x,y}$ is the solution to equation \eqref{FEQ1}, i.e.,
\begin{eqnarray}
Y_{\frac{s}{\vare}}^{X_{k\delta}^{\vare},\hat Y_{k\delta}^{\vare}}=\!\!\!\!\!\!\!\!&&\hat Y_{k\delta}^{\vare}
+\int_{0}^{\frac{s}{\vare}}AY_{r}^{X_{k\delta}^{\vare},\hat Y_{k\delta}^{\vare}}dr
+\int_{0}^{\frac{s}{\vare}}g\left(X_{k\delta}^{\vare}, Y_{r}^{X_{k\delta}^{\vare},\hat Y_{k\delta}^{\vare}}\right)dr\nonumber\\
\!\!\!\!\!\!\!\!&&+\int_{0}^{\frac{s}{\vare}}\sigma_2\left(X_{k\delta}^{\vare}, Y_{r}^{X_{k\delta}^{\vare},\hat Y_{k\delta}^{\vare}}\right)d\bar{W}^{Q_2} _r \nonumber\\
=\!\!\!\!\!\!\!\!&&\hat Y_{k\delta}^{\vare}
+\frac{1}{\vare}\int_{0}^{s}AY_{\frac{r}{\vare}}^{X_{k\delta}^{\vare},\hat Y_{k\delta}^{\vare}}dr
+\frac{1}{\vare}\int_{0}^{s}g\left(X_{k\delta}^{\vare},Y_{\frac{r}{\vare}}^{X_{k\delta}^{\vare},\hat Y_{k\delta}^{\vare}}\right)dr\nonumber\\
&&+\frac{1}{\sqrt{\vare}}\int_{0}^{s}\sigma_2\left(X_{k\delta}^{\vare}, Y_{\frac{r}{\vare}}^{X_{k\delta}^{\vare},\hat Y_{k\delta}^{\vare}}\right)d\hat{W}^{Q_2}_r,  \label{E3.27}
\end{eqnarray}
where
$\hat{W}^{Q_2}_t:=\vare^{1/ 2}\bar{W}^{Q_2}_{\frac{t}{\vare}}$. Then the uniqueness of the solution to equations \eref{E3.26} and \eref{E3.27} implies
that the distribution of $\left(X_{k\delta}^{\vare}, \hat{Y}^{\vare}_{s+k\delta}\right)_{0\leq s\leq \delta}$
coincides with the distribution of
$\left(X_{k\delta}^{\vare}, Y_{\frac{s}{\vare}}^{X_{k\delta}^{\vare},\hat Y_{k\delta}^{\vare}}\right)_{0\leq s\leq \delta}$.

\vspace{2mm}
Now we try to control  $\|J_1^{\vare}(t)\|_{1/2}$:
\begin{eqnarray}
\!\!\!\!\!\!\!\!&& \EE\left(\sup_{t\in[0,T]} \left\| J_1^{\vare}(t) \right\|^{2}_{1/2}\right) \nonumber\\
=\!\!\!\!\!\!\!\!&&\EE\left\{\sup_{t\in[0,T]}\left |\sum_{k=0}^{[t/\delta]-1}e^{(t-(k+1)\delta)A}
\int_{k\delta}^{(k+1)\delta}(-A)^{1/4}e^{((k+1)\delta-s)A}\left[f\left(X_{k\delta}^{\vare},\hat{Y}_{s}^{\vare}\right)
- \bar{f}\left(X_{k\delta}^{\vare}\right)\right]ds\right |^{2}\right\}\nonumber\\
\leq\!\!\!\!\!\!\!\!&& \EE\left\{\sup_{t\in[0,T]}\left[[t/\delta]\sum_{k=0}^{[t/\delta]-1}\Big |\int_{k\delta}^{(k+1)\delta}
(-A)^{1/4}e^{((k+1)\delta-s)A}\left[f\left(X_{k\delta}^{\vare},\hat{Y}_{s}^{\vare}\right)-\bar{f}
\left(X_{k\delta}^{\vare}\right)\right]ds\Big |^{2}\right]\right\}\nonumber\\
\leq\!\!\!\!\!\!\!\!&&\left[\frac{T}{\delta}\right]\sum_{k=0}^{\left[\frac{T}{\delta}\right]-1}
\mathbb{E} \Big |\int_{k\delta}^{(k+1)\delta}(-A)^{1/4}e^{((k+1)\delta-s)A}\left[f\left(X_{k\delta}^{\vare},\hat{Y}_{s}^{\vare}\right)
-\bar{f}\left(X_{k\delta}^{\vare}\right)\right]ds\Big|^{2}\nonumber\\
\leq\!\!\!\!\!\!\!\!&&\frac{C_{T}}{\delta^{2}}\max_{0\leq k\leq\left[\frac{T}{\delta}\right]-1}\mathbb{E}
\Big | \int_{k\delta}^{(k+1)\delta}(-A)^{1/4}e^{((k+1)\delta-s)A}\left[f\left(X_{k\delta}^{\vare},\hat{Y}_{s}^{\vare}\right)-\bar{f}\left(X_{k\delta}^{\vare}\right)\right]ds \Big |^{2}  \nonumber\\
=\!\!\!\!\!\!\!\!&&
C_{T}\frac{\vare^{2}}{\delta^{2}}\max_{0\leq k\leq\left[\frac{T}{\delta}\right]-1}
\mathbb{E}
\Big| \int_{0}^{\frac{\delta}{\vare}}
(-A)^{1/4}e^{(\delta-s\vare)A}
\left[f\left(X_{k\delta}^{\vare},\hat{Y}_{s\de+k\delta}^{\vare}\right)-\bar{f}\left(X_{k\delta}^{\vare}\right)\right]ds\Big|^{2}  \nonumber\\
=\!\!\!\!\!\!\!\!&&2C_{T}\frac{\vare^{2}}{\delta^{2}}\max_{0\leq k\leq\left[\frac{T}{\delta}\right]-1}\int_{0}^{\frac{\delta}{\vare}}
\int_{r}^{\frac{\delta}{\vare}}\Psi_{k}(s,r)dsdr,  \nonumber
\end{eqnarray}
where
\begin{eqnarray*}
\Psi_k(s,r):=\mathbb{E}&&\!\!\!\!\!\!\!\!\Big\langle(-A)^{1/4}e^{(\delta-s\varepsilon)A}\left[f(X_{k\delta}^\varepsilon,\hat{Y}^\varepsilon_{s\varepsilon+k\delta})-\bar{f}(X_{k\delta}^\varepsilon)\right],
\\\!\!\!\!\!\!\!\!&&
(-A)^{1/4}e^{(\delta-r\varepsilon)A}\left[f(X_{k\delta}^\varepsilon,\hat{Y}^\varepsilon_{r\varepsilon+k\delta})-\bar{f}(X_{k\delta}^\varepsilon)\right]\Big\rangle.\nonumber
\end{eqnarray*}
Denote the $\sigma$-field generated by $\{Y_u^{x,y};u\leq s\}$ by
\begin{eqnarray*}
\tilde{\mathcal{F}}_s:=\sigma(Y_u^{x,y};u\leq s).
\end{eqnarray*}
Then for $0\leq r< s\leq \frac{\delta}{\vare}$, notice that the distribution of $\left(X_{k\delta}^{\vare}, \hat{Y}^{\vare}_{s+k\delta}\right)_{0\leq s\leq \delta}$
coincides with the distribution of $\left(X_{k\delta}^{\vare}, Y_{\frac{s}{\vare}}^{X_{k\delta}^{\vare},\hat Y_{k\delta}^{\vare}}\right)_{0\leq s\leq \delta}$, we can get that
\begin{eqnarray*}
\Psi_{k}(s,r)
=\!\!\!\!\!\!\!\!&&\mathbb{E}\Big\{\EE\left\langle (-A)^{1/4}e^{(\delta-s\vare)A}
\big[f\left(x,Y_{s}^{x,y}\right)-\bar{f}(x)\big],
\right.\\
&&\quad\quad\left.
 (-A)^{1/4}e^{(\delta-r\vare)A}\left[f\left(x, Y_{r}^{x,y}\right)-\bar{f}(x)\right]\right\rangle \mid_{(x,y)=\left(X_{k\delta}^{\vare},\hat{Y}^{\vare}_{k\delta}\right)}\Big\}  \nonumber\\
=\!\!\!\!\!\!\!\!&&\mathbb{E}\left\{\EE\Big[\left\langle (-A)^{1/4}e^{(\delta-s\vare)A}
\EE \big[f\left(x,Y_{s}^{x,y}\right)-\bar{f}(x)\mid \tilde {\mathcal{F}}_{r}\big], \right.\right.\\
&&\quad\quad\left.\left.(-A)^{1/4}e^{(\delta-r\vare)A}\big[f\left(x, Y_{r}^{x,y}\right)-\bar{f}(x)\big]\right\rangle\Big]\mid_{(x,y)=\left(X_{k\delta}^{\vare},\hat{Y}^{\vare}_{k\delta}\right)}\right\}
\end{eqnarray*}
Then by the Markov property of $Y^{x,y}_t$, \eref{SP} and Theorem \ref{Rem 4.1}, we have
\begin{eqnarray*}
\Psi_{k}(s,r)
\leq\!\!\!\!\!\!\!\!&&C(\delta-s\vare)^{-1/4}(\delta-r\vare)^{-1/4}
\nonumber\\
\!\!\!\!\!\!\!\!&& \cdot\mathbb{E}\left\{\EE\left[\left|\EE f\left(x,Y_{s-r}^{x,z}\right)-\bar{f}(x)\right|\mid_{\{z=Y^{x,y}_r\}}\left(1+|x|+|Y_{r}^{x,y}|\right)\right]\mid_{(x,y)=\left(X_{k\delta}^{\vare},\hat Y_{k\delta}^{\vare}\right)}\right\}\nonumber\\
\leq\!\!\!\!\!\!\!\!&& C(\delta-s\vare)^{-1/4}(\delta-r\vare)^{-1/4}e^{-(s-r)\eta} \mathbb{E}\left[\EE\left(1+|x|^2+\left|Y^{x,y}_{r}\right|^2\right)\mid_{(x,y)=\left(X_{k\delta}^{\vare},\hat Y_{k\delta}^{\vare}\right)}\right]\nonumber\\
\leq\!\!\!\!\!\!\!\!&&C\mathbb{E}\left(1+\left|X_{k\delta}^{\vare}\right|^2+\left|\hat Y^{\vare}_{k\delta}\right|^2\right)(\delta-s\vare)^{-1/4}(\delta-r\vare)^{-1/4}e^{-(s-r)\eta}\nonumber\\
\leq\!\!\!\!\!\!\!\!&&C_T\left(1+ |x|^2+ |y|^2\right)(\delta-s\vare)^{-1/4}(\delta-r\vare)^{-1/4}e^{-(s-r)\eta},
\end{eqnarray*}
where the last two inequalities are deduced by \eref{PMY} and \eref{3.13a}. Then we get
\begin{eqnarray*}
\mathbb{E}\left[\sup_{t\in[0,T]}\|J_1^\varepsilon(t)\|_{1/2}^2\right]
&&\!\!\!\!\!\!\!\!\leq C_T(1+|x|^2+|y|^2)\frac{\varepsilon^2}{\delta^2}
\\\!\!\!\!\!\!\!\!&&\cdot\int_0^\frac{\delta}{\varepsilon}\int_r^\frac{\delta}{\varepsilon}(\delta-s\varepsilon)^{-1/4}
(\delta-r\varepsilon)^{-1/4}e^{-(s-r)\eta}dsdr\\
&&\!\!\!\!\!\!\!\!\leq C_T(1+|x|^2+|y|^2)\frac{\varepsilon^2}{\delta^2}
\\\!\!\!\!\!\!\!\!&&
\cdot\int_0^\frac{\delta}{\varepsilon}\left[\int_r^\frac{\delta}{\varepsilon}(\delta-s\varepsilon)^{-1/2}ds\right]^{1/2}
\left[\int_r^\frac{\delta}{\varepsilon}e^{-2s\eta}ds\right]^{1/2}(\delta-r\varepsilon)^{-1/4}e^{r\eta}dr\\
&&\!\!\!\!\!\!\!\!\leq C_T(1+|x|^2+|y|^2)\frac{\varepsilon^2}{\delta^2}\int_0^\frac{\delta}{\varepsilon}\frac{1}{\varepsilon^{1/2}}(\delta-r\varepsilon)^{1/4}e^{-r\eta}
(\delta-r\varepsilon)^{-1/4}e^{r\eta}dr\\
&&\!\!\!\!\!\!\!\!\leq  C_T(1+|x|^2+|y|^2)\frac{\varepsilon^{1/2}}{\delta},
\end{eqnarray*}
which completes the estimate \eref{3.23}. The proof is complete.
\end{proof}

\subsection{Proof of Theorem \ref{main result 1}:}
Taking $\delta=\vare^{\frac{1}{3}}$, we can get by Lemmas $\ref{DEX}$ and $\ref{ESX}$ that
\begin{eqnarray}\label{9.1}
\mathbb{E}\left(\sup_{t\in [0, T]}|X^{\vare}_{t}-\bar{X}_{t}|^2 1_{\{T\leq\tilde{\tau}^{\vare}_{R}\}}\right)
\leq\!\!\!\!\!\!\!\!&&C\mathbb{E}\left(\sup_{t\in [0, T\wedge\tilde{\tau}_{R}^{\vare}]}|X^{\vare}_{t}-\hat{X}^{\vare}_{t}|^2
+\sup_{t\in [0, T\wedge\tilde{\tau}_{R}^{\vare}]}|\hat X^{\vare}_{t}-\bar{X}_{t}|^2\right)\nonumber\\
\leq\!\!\!\!\!\!\!\!&&C_{R,T}(1+|x|^3+|y|^{3})\left(\frac{\varepsilon^\frac{1}{2}}{\delta}+\delta^{1/2}\right)\nonumber\\
\leq\!\!\!\!\!\!\!\!&&C_{R,T}(1+|x|^3+|y|^{3}) \vare^{\frac{1}{6}}.
\end{eqnarray}
By chebyshev's inequality, Lemmas \ref{PMY}, \ref{MDYa} and \ref{L3.8}, we have
\begin{eqnarray}
&&\mathbb{E}\left(\sup_{t\in [0, T]}|X_{t}^{\vare}-\bar{X}_{t}|^2 1_{\{T>\tilde{\tau}^{\vare}_{R}\}}\right)\nonumber\\
\leq\!\!\!\!\!\!\!\!&&\left[\mathbb{E}\left(\sup_{t\in [0, T]}|X_{t}^{\vare}-\bar{X}_{t}|^{4}\right)\right]^{1/2}
\cdot\left[\mathbb{P}\left(T>\tilde{\tau}_{R}^{\vare}\right)\right]^{1/2} \nonumber\\
\leq\!\!\!\!\!\!\!\!&& \frac{C_{T}(1+|x|^2+|y|^{2})}{\sqrt{R}}\left[\mathbb{E}\left(\int_0^T\|\bar{X}_s\|_1^2ds+\int_0^T\|X^{\vare}_s\|_1^2ds+\int_0^T\|\hat{X}_s^\varepsilon\|_1^2ds+\int_0^T\|J_s^\varepsilon\|_{1/2}^2ds\right)\right]^{1/2}\nonumber\\
\leq\!\!\!\!\!\!\!\!&&\frac{C_{T}(1+|x|^3+|y|^{3})}{\sqrt{R}}. \label{after tau_R}
\end{eqnarray}
Hence, combining \eref{9.1} and \eref{after tau_R} together yields
\begin{eqnarray*}
\mathbb{E}\left(\sup_{t\in [0, T]}|X_{t}^{\vare}-\bar{X}_{t}|^2\right)\leq\!\!\!\!\!\!\!\!&& C_{R,T}(1+|x|^3+|y|^{3}) \vare^{\frac{1}{6}}+\frac{C_{T}(1+|x|^3+|y|^{3})}{\sqrt{R}}.
\end{eqnarray*}
Now, letting $\vare\rightarrow 0$ firstly and then $R\rightarrow \infty$, we have
\begin{eqnarray}
\lim_{\vare\rightarrow 0}\mathbb{E}\left(\sup_{t\in [0, T]}|X_{t}^{\vare}-\bar{X}_{t}|^2\right)=0.\label{L2}
\end{eqnarray}
Then for any $p\geq 1$, we have
\begin{eqnarray*}
\EE\left(\sup_{t\in [0, T]}|X_{t}^{\vare}-\bar{X}_{t}|^{2p}\right)=\!\!\!\!\!\!\!\!&& \EE\left[\sup_{t\in [0, T]}\left(|X_{t}^{\vare}-\bar{X}_{t}| |X_{t}^{\vare}-\bar{X}_{t}|^{2p-1}\right)\right]\\
\leq\!\!\!\!\!\!\!\!&&C_p\left[\EE\left(\sup_{t\in [0, T]}|X_{t}^{\vare}-\bar{X}_{t}|^{2}\right)\right]^{1/2}\left[\EE\left(\sup_{t\in [0, T]}|X_{t}^{\vare}-\bar{X}_{t}|^{4p-2}\right)\right]^{1/2}.
\end{eqnarray*}
As a result, by \eref{L2}, Lemmas \ref{PMY} and \ref{L3.8}, it is easy to see
$$
\lim_{\vare\rightarrow 0}\EE\left(\sup_{t\in [0, T]}|X_{t}^{\vare}-\bar{X}_{t}|^{2p}\right)=0,
$$
which completes the proof.\hspace{\fill}$\Box$

\section{Appendix} \label{Sec appendix}

The following properties of $b(\cdot,\cdot,\cdot)$ and $B(\cdot,\cdot)$ are well-known (for example see \cite{DX,FMRT,HM1}). 
\begin{lemma} \label{Property B1}
For any $u,v,w\in H^1$, we have
$$ b(u,v,w)=-b(u,w,v),\quad b(u,v,v)=0.$$
Moreover, if $\alpha_{i}\geq 0~(i=1,2,3)$ satisfies one of the following conditions \\
$(1) ~\alpha_{i}\neq1(i=1,2,3), \alpha_{1}+\alpha_{2}+\alpha_{3}\geq 1$, \\
$(2) ~\alpha_{i}=1$ for some $i$, $\alpha_{1}+\alpha_{2}+\alpha_{3}>1$,\\
then $b(u,v,w)$ is continuous from
${H}^{\alpha_{1}}\times
{H}^{\alpha_{2}+1}\times
{H}^{\alpha_{3}}$ to $\mathbb{R}$, i.e.
\begin{eqnarray*}|b(u,v,w)|\leq C\|u\|_{\alpha_{1}}\|v\|_{\alpha_{2}+1}\|w\|_{\alpha_{3}}.\end{eqnarray*} 
\end{lemma}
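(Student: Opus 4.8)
The plan is to handle the two algebraic identities and the continuity estimate separately. For the identities, I would first prove $b(u,v,w)=-b(u,w,v)$ for smooth divergence-free fields by integration by parts on $\mathbb{T}^{2}$, where no boundary terms arise:
\[
b(u,v,w)=\sum_{i,j=1}^{2}\int_{\mathbb{T}^{2}}u_i(\partial_i v_j)w_j\,d\xi=-\sum_{i,j=1}^{2}\int_{\mathbb{T}^{2}}(\partial_i u_i)v_jw_j\,d\xi-\sum_{i,j=1}^{2}\int_{\mathbb{T}^{2}}u_i v_j(\partial_i w_j)\,d\xi,
\]
and the first sum equals $-\int_{\mathbb{T}^{2}}(\nabla\cdot u)(v\cdot w)\,d\xi=0$; setting $w=v$ then gives $b(u,v,v)=0$. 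To pass from smooth fields to arbitrary $u,v,w\in H^{1}$, I would use density of smooth divergence-free vector fields in $H^{1}$ together with the two-dimensional embedding $H^{1}\hookrightarrow L^{q}$ for every finite $q$, which makes each of the three trilinear terms continuous in the $H^{1}\times H^{1}\times H^{1}$ topology, so the identities survive the limit.

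For the continuity estimate the idea is H\"older's inequality combined with Sobolev embeddings on $\mathbb{T}^{2}$. Starting from $|b(u,v,w)|\le\sum_{i,j}\int_{\mathbb{T}^{2}}|u_i|\,|\partial_i v_j|\,|w_j|\,d\xi\le|u|_{L^{p_1}}|\nabla v|_{L^{p_2}}|w|_{L^{p_3}}$ for exponents with $1/p_1+1/p_2+1/p_3=1$, I would invoke that in dimension two $H^{s}\hookrightarrow L^{2/(1-s)}$ for $0\le s<1$, $H^{s}\hookrightarrow L^{\infty}$ for $s>1$, and $H^{1}\hookrightarrow L^{q}$ for all finite $q$, together with $|\partial_i v|_{L^{p_2}}\le C\|v\|_{\alpha_2+1}$ whenever $H^{\alpha_2}\hookrightarrow L^{p_2}$ (since $\partial_i$ commutes with $(-A)^{\alpha_2/2}$ on the torus). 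Putting $\beta_i:=\max\{(1-\alpha_i)/2,\,0\}$, it suffices to choose $p_i$ with $1/p_i\ge\beta_i$ and $\sum_i 1/p_i=1$; finiteness of $|\mathbb{T}^{2}|$ then yields $H^{\alpha_i}\hookrightarrow L^{q}\hookrightarrow L^{p_i}$. Such a choice exists exactly when $\sum_i\beta_i\le1$, which under condition (1), namely $\alpha_1+\alpha_2+\alpha_3\ge1$, indeed holds; the caveat is that whenever some $\alpha_i=1$ one needs $1/p_i>0$ strictly, because $H^{1}\not\hookrightarrow L^{\infty}$ in two dimensions, which forces the strict inequality $\sum_i\beta_i<1$ — exactly what condition (2) guarantees. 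Applying the three embeddings to $u$, to $\nabla v\in H^{\alpha_2}$, and to $w$ then delivers the bound, and trilinearity of $b$ is evident from its defining formula, so continuity follows.

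The only genuinely delicate point — and thus the main obstacle — is the exponent bookkeeping: one must verify case by case (all $\alpha_i<1$; exactly one or more $\alpha_i>1$; the borderline situations with some $\alpha_i=1$) that the $\beta_i$ can be realized by admissible H\"older exponents summing to $1$, keeping careful track of the failure of $H^{1}\hookrightarrow L^{\infty}$ in two dimensions, which is precisely why condition (2) must demand a strict inequality. Everything else is a routine application of H\"older's inequality and the Sobolev embedding theorem on $\mathbb{T}^{2}$.
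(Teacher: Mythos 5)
The paper itself offers no proof of this lemma: it is stated in the appendix as ``well-known'' with citations to \cite{DX,FMRT,HM1}, so there is nothing internal to compare against. Your argument is the standard one found in those references (and in Temam's treatment of the trilinear form), and it is correct. The integration by parts on $\mathbb{T}^{2}$ for smooth divergence-free fields, using $\nabla\cdot u=0$ to kill the term $\int_{\mathbb{T}^{2}}(\nabla\cdot u)(v\cdot w)\,d\xi$, followed by density of trigonometric divergence-free fields and the bound $|b(u,v,w)|\leq C\|u\|_{1}\|v\|_{1}\|w\|_{1}$ (via $H^{1}\hookrightarrow L^{4}$) to pass to the limit, is exactly how the identities are established. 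For the continuity estimate, your H\"older--Sobolev scheme with $\beta_{i}=\max\{(1-\alpha_{i})/2,0\}$ is right, and your accounting checks out: under condition (1) one has $\sum_{i}\beta_{i}\leq 1$ (if all $\alpha_{i}\leq 1$ this is $(3-\sum_{i}\alpha_{i})/2\leq 1$; if some $\alpha_{i}>1$ its $\beta_{i}$ vanishes and the other two contribute at most $1/2$ each), while condition (2) forces the strict inequality $\sum_{i}\beta_{i}<1$ needed to avoid the failing endpoint embedding $H^{1}\hookrightarrow L^{\infty}$. The only point worth making explicit beyond what you wrote is that $\partial_{i}v_{j}$ is a scalar (not divergence-free) function, so the embedding $\|\partial_{i}v_{j}\|_{L^{p_{2}}}\leq C\|v\|_{\alpha_{2}+1}$ is the ordinary Sobolev embedding on the torus applied componentwise via the Fourier characterization of $H^{s}$; on $\mathbb{T}^{2}$ with $A=\Delta$ this is harmless. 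Since you flag the exponent bookkeeping as the remaining routine verification, I would accept this as a complete proof sketch consistent with the cited sources.
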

From the definition of $b$, interpolation inequality and Lemma \ref{Property B1} above, the following inequalities can be
derived.
\begin{corollary} \label{Property B2} For any $u, v\in {H}^1$, there exists a constant $C>0$ such that
\begin{enumerate}
\item[(1)]$\langle B(u),v\rangle\leq C\|u\|_1^{3/2}|u|^{1/2}|v|_{L^4};$
\item[(2)]$\|B(u)\|_{-1}\leq C|u|\|u\|_{1};$
\item[(3)] $\langle B(u)-B(v),u-v\rangle\leq C|u-v|\|u-v\|_1\|u\|_1;$
\item[(4)]  $\|B(u)-B(v)\|_{-1}\leq C\|u-v\|_{1/2}\left(\|u\|_{1/2}+\|v\|_{1/2}\right).$
\end{enumerate}
\end{corollary}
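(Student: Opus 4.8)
The plan is to obtain all four bounds from the two structural facts in Lemma \ref{Property B1}, namely the cancellations $b(u,v,w)=-b(u,w,v)$ and $b(u,v,v)=0$, together with the continuity estimate $|b(u,v,w)|\leq C\|u\|_{\alpha_1}\|v\|_{\alpha_2+1}\|w\|_{\alpha_3}$, and three elementary facts on $\mathbb{T}^2$: the identity $\|w\|_1=|\nabla w|_{L^2}$, the Poincar\'e inequality $|w|\leq C\|w\|_1$ on $H$, and the interpolation bound $\|w\|_{1/2}^2\leq|w|\,\|w\|_1$ (immediate from Cauchy--Schwarz applied to $\sum_k\lambda_k^{1/2}w_k^2$). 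The exponent triple I will use repeatedly is $(\alpha_1,\alpha_2,\alpha_3)=(1/2,0,1/2)$, which satisfies case (1) of Lemma \ref{Property B1} since no entry equals $1$ and the entries sum to exactly $1$; it gives $|b(w_1,w_2,w_3)|\leq C\|w_1\|_{1/2}\|w_2\|_1\|w_3\|_{1/2}$.

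For (1), I would write $\langle B(u),v\rangle=b(u,u,v)=\sum_{i,j}\int_{\mathbb{T}^2}u_i\,\partial_iu_j\,v_j\,d\xi$ and apply H\"older with exponents $(4,2,4)$ to get $|\langle B(u),v\rangle|\leq C|u|_{L^4}\|u\|_1|v|_{L^4}$; the two-dimensional Ladyzhenskaya inequality $|u|_{L^4}\leq C|u|^{1/2}\|u\|_1^{1/2}$ (using Poincar\'e to absorb the $L^2$ part of the $H^1$ norm) then yields the claim. For (2), I would pair $B(u)$ with a test function $\phi\in H^1$, shift the derivative onto $\phi$ by antisymmetry, $\langle B(u),\phi\rangle=b(u,u,\phi)=-b(u,\phi,u)$, bound this by $C\|u\|_{1/2}^2\|\phi\|_1$ via the triple above, and finish with $\|u\|_{1/2}^2\leq|u|\,\|u\|_1$ after taking the supremum over $\|\phi\|_1\leq1$.

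For (3) and (4) the starting point is the bilinear splitting $B(u)-B(v)=B(u-v,u)+B(v,u-v)$. In (3), pairing with $u-v$ kills the second term because $b(v,u-v,u-v)=0$, leaving $\langle B(u)-B(v),u-v\rangle=b(u-v,u,u-v)$, which the triple $(1/2,0,1/2)$ bounds by $C\|u-v\|_{1/2}^2\|u\|_1\leq C|u-v|\,\|u-v\|_1\|u\|_1$. In (4), pairing with $\phi\in H^1$ and moving the derivative onto $\phi$ in both terms by antisymmetry gives $|\langle B(u)-B(v),\phi\rangle|\leq|b(u-v,\phi,u)|+|b(v,\phi,u-v)|\leq C\|\phi\|_1\big(\|u-v\|_{1/2}\|u\|_{1/2}+\|v\|_{1/2}\|u-v\|_{1/2}\big)$, and a supremum over $\|\phi\|_1\leq1$ finishes it. The only point that needs genuine care is the bookkeeping of the exponent triples, so that every invocation of Lemma \ref{Property B1} stays in the admissible borderline regime $\alpha_1+\alpha_2+\alpha_3=1$ with all $\alpha_i\neq1$; once that is arranged, the remaining steps are routine uses of H\"older, Poincar\'e, and interpolation.
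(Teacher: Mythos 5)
Your argument is correct and follows exactly the route the paper indicates (the paper offers no written proof, only the remark that the corollary follows from the definition of $b$, interpolation, and Lemma \ref{Property B1}): the bilinear splitting $B(u)-B(v)=B(u-v,u)+B(v,u-v)$, the antisymmetry $b(u,v,w)=-b(u,w,v)$, the admissible triple $(1/2,0,1/2)$, the interpolation $\|w\|_{1/2}^2\leq|w|\,\|w\|_1$, and H\"older plus Ladyzhenskaya for item (1) are precisely the intended ingredients. All four estimates check out, so your write-up simply supplies the details the paper leaves implicit.
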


\medskip
At the end of this section, we give the proof of Theorem \ref{Th1} based on the techniques used in \cite{DM}.

\medskip
\noindent
\textbf{Proof of Theorem \ref{Th1}:}
Let $\mathcal{H}:={H}\times H$ be the product Hilbert space. For any $\phi=(\phi_1,\phi_2),\varphi=(\varphi_1,\varphi_2)\in\mathcal{H}$, we denote the scalar product and the induced norm by
\begin{eqnarray*}
\langle\phi,\varphi\rangle_{\mathcal{H}}=\int_{\mathbb{T}^2}\phi_1({\xi})\varphi_1({\xi})d{\xi}+\int_{\mathbb{T}^2}\phi_2({\xi})\varphi_2({\xi})d{\xi},~~
|\phi|_{\mathcal{H}}=\sqrt{\langle\phi,\phi\rangle_{\mathcal{H}}}=\sqrt{|\phi_1|^2+|\phi_2|^2}.
\end{eqnarray*}
Similarly, we also define $\mathcal{V}:={H}^1\times H^1$. Then $\mathcal{V}$ is a product Hilbert space with the scalar product and the induced norm,
\begin{eqnarray*}&&\langle\phi,\varphi\rangle_{\mathcal{V}}=\int_{\mathbb{T}^2}\nabla\phi_1({\xi})\cdot\nabla\varphi_1({\xi})d{\xi}
+\int_{\mathbb{T}^2}\nabla\phi_2({\xi})\cdot\nabla\varphi_2({\xi})d{\xi},
\\
&&\|\phi\|_{\mathcal{V}}=\sqrt{\langle\phi,\phi\rangle_{\mathcal{V}}}=\sqrt{\|\phi_1\|_1^2+\|\phi_2\|_1^2}.\end{eqnarray*}
Now we rewrite the system \eref{main equation} for $Z^{\varepsilon}=(X^{\varepsilon},Y^{\varepsilon})$ as
\begin{eqnarray}
dZ^{\varepsilon}=\tilde{A}Z^{\varepsilon}dt+F(Z^{\varepsilon})dt+\sigma(Z^{\varepsilon})dW_t,\quad Z^{\varepsilon}_0=(x,y)\in \mathcal{H},\label{E4.1}
\end{eqnarray}
where $W_t:=(W_t^{Q_1},W_t^{Q_2})$, which is a $\mathcal{H}$-valued $Q$-Wiener process with $Q:=(Q_1, Q_2)$ and $Q$ is a positive symmetric, trace class operate on $\mathcal{H}$ and
\begin{eqnarray*}
&&\tilde{A}Z^{\varepsilon}=\left(AX^{\varepsilon},\frac{1}{\varepsilon}AY^{\varepsilon}\right),
\\&&F(Z^{\varepsilon})=\left(-B(X^{\varepsilon})+f(X^{\varepsilon},Y^{\varepsilon}),\frac{1}{\varepsilon}g(X^{\varepsilon},Y^{\varepsilon})\right),
\\&&\sigma(Z^{\varepsilon})=\left(\sigma_1(X^{\varepsilon}),\frac{1}{\sqrt{\varepsilon}}\sigma_2(X^{\varepsilon},Y^{\varepsilon})\right).
\end{eqnarray*}
Moreover, $\sigma$ is an operator from $\mathcal{H}\rightarrow \mathcal{L}_{Q}(Q^{1/2}\mathcal{H},\mathcal{H})$, here $\mathcal{L}_{Q}(Q^{1/2}\mathcal{H},\mathcal{H})$ is the space of linear operate $S:Q^{1/2}\mathcal{H}\rightarrow \mathcal{H}$ such that
$SQ^{1/2}$ is a Hilbert-Schmidit operator from $\mathcal{H}$ to $\mathcal{H}$. The norm in $\mathcal{L}_{Q}(Q^{1/2}\mathcal{H},\mathcal{H})$ is defined by
$$|\sigma(z)|_{\mathcal{L}_{Q}}=\sqrt{|\sigma_1(x)|_{\mathcal{L}_{Q_1}}^2+\frac{1}{\varepsilon}|\sigma_2(x, y)|_{\mathcal{L}_{Q_2}}^2},\quad z=(x,y)\in\mathcal{H}.$$
Let $\mathcal{V}^{'}$
be the dual space of $\mathcal{V}$ and we consider the following  Gelfand triple $\mathcal{V}\subset\mathcal{H}\equiv\mathcal{H}^{'}\subset\mathcal{V}^{'}$.
It is easy to see that the following mappings
$$\tilde{A}:\mathcal{V}\rightarrow\mathcal{V}^{'},~~F:\mathcal{V}\rightarrow\mathcal{V}^{'}$$
are well defined. To complete the proof, it remains to take finite-dimensional Galerkin approximation, and apply the weak convergence approach. The details are similar to the arguments in \cite{DM,MS}, where the authors deal with the 2D stochastic Boussinesq equation and 2D stochastic Navier-Stokes equation, respectively. So we only check the new coefficients in equation \eref{E4.1} satisfy the local monotonicity and coercivity properties and further details are omitted here.

Indeed,
for any $w=(u,v)\in\mathcal{V}$, by Lemma \ref{Property B1}, there exist constants $C_{\vare}>0$ and $C>0$ such that
\begin{eqnarray*}
\langle\tilde{A}w+F(w) ,w\rangle+|\sigma(w)|^2_{\mathcal{L}_{Q}}=\!\!\!\!\!\!\!\!&&\langle{A}u-B(u)+f(u,v),u\rangle
+\frac{1}{\varepsilon}\langle Av+g(u,v),v\rangle
\\\!\!\!\!\!\!\!\!&&+C(1+|u|^2+|v|^2)
\\\leq\!\!\!\!\!\!\!\!&&-\|u\|^2_1-\frac{1}{\varepsilon}\|v\|^2_1+C(1+|u|^2+|v|^2)
\\\leq\!\!\!\!\!\!\!\!&&-C_{\vare}\|w\|^2_{\mathcal{V}}+C(1+|w|^2_{\mathcal{H}}),
\end{eqnarray*}
which implies that the coercivity condition holds. For any $w_1=(u_1,v_1),w_2=(u_2,v_2)\in\mathcal{V}$, by Corollary \ref{Property B2} and the condition \ref{A1}, we have
\begin{eqnarray*}
&&\langle F(w_1)-F(w_2),w_1-w_2\rangle
\\=\!\!\!\!\!\!\!\!&&\langle -B(u_1-u_2,u_2)+f(u_1,v_1)-f(u_2,v_2),u_1-u_2\rangle
+\frac{1}{\varepsilon}\langle g(u_1,v_1)-g(u_2,v_2),v_1-v_2\rangle
\\\leq\!\!\!\!\!\!\!\!&&2\|u_1-u_2\|_1^{3/2}|u_1-u_2|^{1/2}| u_2|_{L^4}+C(|u_1-u_2|^2+|v_1-v_2|^2)
\\\leq\!\!\!\!\!\!\!\!&&\frac{1}{2}\|u_1-u_2\|_1^2+C(1+| u_2|_{L^4}^4)|u_1-u_2|^2+C|v_1-v_2|^2.
\end{eqnarray*}
Therefore, we get
\begin{eqnarray*}
&&\langle \tilde{A}w_1+F(w_1)-\tilde{A}w_2-F(w_2),w_1-w_2\rangle+|\sigma(w_1)-\sigma(w_2)|^2_{\mathcal{L}_{Q}}\\
\leq\!\!\!\!\!\!\!\!&&-|u_1-u_2|^2_1-\frac{1}{\vare}|v_1-v_2|^2_1+\frac{1}{2}\|u_1-u_2\|_1^2+C(1+| u_2|_{L^4}^4)|u_1-u_2|^2+C|v_1-v_2|^2\\
\leq\!\!\!\!\!\!\!\!&&C(1+|u_2|_{L^4}^4)|w_1-w_2|^2_{\mathcal{H}}.
\end{eqnarray*}
\hspace{\fill}$\Box$

\vspace{0.3cm}
\textbf{Acknowledgment}. This work is supported by the NNSF of China (No.: 11601196, 11771187), the Natural Science Foundation of the Jiangsu Higher Education Institutions of China (16KJB110006) and the PAPD of Jiangsu Higher Education Institutions.

\vspace{0.3cm}


\begin{thebibliography}{2}

\bibitem{BR} R. Bertram, J.E. Rubin,
\emph{Multi-timescale systems and fast-slow analysis}, {\it Math. Biosci.}
287  (2017) 105-121.

\bibitem{BM} N.N. Bogoliubov, Y.A. Mitropolsky, \emph{Asymptotic methods in the thory of Non-linear Oscillations}, {\it Gordon and Breach Science Publishers}, New York (1961).

\bibitem{B1} C.E. Br\'{e}hier, \emph{Strong and weak orders in averaging for SPDEs}, {\it Stochastic Process. Appl.} 122 (2012) 2553-2593.

\bibitem{C1} S. Cerrai, \emph{A Khasminskii type averaging principle for stochastic reaction-diffusion equations},  {\it Ann. Appl. Probab.} 19  (2009) 899-948.
\bibitem{C2} S. Cerrai, \emph{Averaging principle for systems of reaction-diffusion equations with polynomial nonlinearities perturbed by multiplicative noise},  {\it SIAM J. Math. Anal.} 43 (2011) 2482-2518.
\bibitem{CF} S. Cerrai, M. Freidlin, \emph{Averaging principle for stochastic reaction-diffusion equations}, {\it Probab.Theory Related Fields} 144 (2009) 137-177.


\bibitem{CL} S. Cerrai, A. Lunardi, \emph{Averaging principle for nonautonomous slow-fast systems of stochastic reaction-diffusion equations: the almost periodic case}, {\it SIAM J. Math. Anal.} 49 (2017) 2843-2884.

\bibitem{Daz1} G. Da Prato, J. Zabczyk, \emph{Stochastic equations in infinite dimensions}, Cambridge University Press, 1992.


\bibitem {DSXZ} Z. Dong, X. Sun, H. Xiao, J. Zhai, \emph{Averaging principle for one dimensional stochastic Burgers equation}, {\it J. Differential Equations} 265 (2018) 4749-4797.

\bibitem {DX} Z. Dong, Y. Xie, \emph{Ergodicity of stochastic 2D Navier-Stokes equation with L\'{e}vy noise},  {\it J. Differential Equations} 251 (2011) 196-222.

\bibitem {DM}  J. Duan, A. Millet, \emph{Large deviations for the Boussinesq equations under random influences}, {\it Stochastic Process. Appl.} 119 (2009) 2052-2081.

\bibitem{WE} W. E, B. Engquist, \emph{Multiscale modeling and computations}, {\it Notice of AMS}, 50
(2003) 1062-1070.


\bibitem{FMRT} C. Foias, O. Manley, R. Rosa,  R. Temam, \emph{Navier-Stokes Equations and
Turbulence}, {\it Cambridge University Press}, 2001.


\bibitem{FL} H. Fu, J. Liu, \emph{Strong convergence in stochastic avergaging principle for two time-scales stochastic partial differential equations}, {\it  J. Math. Anal. Appl.} 384 (2011) 70-86.

\bibitem{FLL} H. Fu, L. Wan, J. Liu, \emph{Strong convergence in averaging principle for stochastic hyperbolic-parabolic equations with two time-scales}, {\it Stochastic Process. Appl.} 125  (2015) 3255-3279.

\bibitem{FLWL} H. Fu, L. Wan, Y. Wang, J. Liu, \emph{Strong convergence rate in averaging principle for stochastic FitzHugh-Nagumo system with two time-scales}, {\it  J. Math. Anal. Appl.} 416 (2014) 609-628.

\bibitem{GP}  P. Gao, \emph{Averaging principle for stochastic Kuramoto-Sivashinsky equation with a fast oscillation},  {\it  Discrete Contin. Dyn. Syst.-A} 38 (2018), no. 11, 5649-5684.




\bibitem{G} D. Givon, I.G. Kevrekidis, R. Kupferman, \emph{Strong convergence of projective integeration schemes for singularly perturbed
stochastic differential systems}, {\it Comm. Math. Sci.} 4 (2006) 707-729.





\bibitem{HM1}  M. Hairer, J.C. Mattingly, \emph{Ergodicity of the 2D Navier-Stokes equations with degenerate stochastic forcing}, {\it Ann. Math.} 164 (2006) 993--1032.

\bibitem{HKW} E. Harvey, V. Kirk, M. Wechselberger, J. Sneyd, \emph{Multiple timescales, mixed mode oscillations and canards in models of intracellular calcium dynamics}, {\it J. Nonlinear Sci.} 21 (2011) 639-683.

\bibitem{K1} R.Z. Khasminskii, \emph{On an averging principle for It\^{o} stochastic differential equations}, {\it Kibernetica} (4) (1968) 260-279.




\bibitem{L} D. Liu, \emph{Strong convergence of principle of averaging for multiscale stochastic dynamical systems}, {\it Commun. Math. Sci.} 8 (2010) 999-1020.

\bibitem{LR1} W. Liu, M. R\"{o}ckner, \emph{Stochastic Partial Differential Equations: An Introduction}, Universitext, Springer, 2015.

\bibitem{LSX} W. Liu, X. Sun, Y. Xie, \emph{Averaging principle foe a class of stochatsic differential euqations}, https://arx iv.org/abs/1809.01424.

\bibitem{EEJ} E.A. Mastny, E.L. Haseltine, J.B. Rawlings, \emph{Two classes of quasi-steady-state model reductions for stochastic kinetics}, {\it J. Chem. Phys} 127 (2007) 094106.


\bibitem{MS}  J.L. Menaldi,  S.S. Sritharan,  \emph{Stochastic 2-D Navie-Stokes equation}, {\it Appl. Math. Optim. }46 (2002) 31-53.

\bibitem{MZ} T. Ma, R. Zhu, \emph{Wong-Zakai approximation and support theorem for SPDEs with locally monotone coefficients},  to appear in {\it J. Math. Anal. Appl.} 2018.

\bibitem{MCCTB} M. Mikikian, M. Cavarroc, L. Couedel, Y. Tessier, L. Boufendi, \emph{Mixed-mode oscillations in complex-plasma instabilities},
{\it Phys. Rev. Lett.} 100 (2008) 225005.





\bibitem{SS}  S.S. Sritharan, P. Sundar, \emph{Large deviations for the two-dimensional Navier-Stokes
equations with multiplicative noise}, {\it Stochastic Process. Appl.} 116 (2006) 1636-1659.



\bibitem{WR12} W. Wang, A.J. Roberts, \emph{Average and deviation for slow-fast stochastic partial differential equations}, {\it J. Differential Equations} 253 (2012) 1265-1286.

\bibitem{WRD12} W. Wang, A.J. Roberts, J. Duan, \emph{Large deviations and approximations for slow-fast stochastic reaction-diffusion equations,} {\it J. Differential Equations} 253 (2012) 3501-3522.

\bibitem{WTRY16} F. Wu, T. Tian, J.B. Rawlings, G. Yin, \emph{Approximate method for stochastic chemical kinetics with two-time scales by chemical Langevin equations}, {\it J. Chem. Phys} 144 (2016) 174112.


\bibitem{XLM} J. Xu, J. Liu, Y. Miao, \emph{Strong averaging principle for two-time-scale SDEs with non-Lipschitz coefficients},
 {\it J. Math. Anal. Appl.} 468 (2018) 116-140.

\bibitem{XPW} Y. Xu, B. Pei, J.-L. Wu,
\emph{Stochastic averaging principle for differential equations with non-Lipschitz coefficients driven by fractional Brownian motion},
{\it Stoch. Dyn.}  17  (2017) 1750013.
\end{thebibliography}
\end{document}